\renewcommand{\mathcal}{\mathscr}
\numberwithin{equation}{section}
\theoremstyle{plain}
\newtheorem{theorem}{Theorem}[section]
\newtheorem{proposition}[theorem]{Proposition}
\newtheorem{lemma}[theorem]{Lemma}
\newtheorem{corollary}[theorem]{Corollary}
\newtheorem{claim}[theorem]{Claim}
\theoremstyle{remark}
\newtheorem{remark}[theorem]{Remark}
\newtheorem{example}[theorem]{Example}
\newtheorem{definition}[theorem]{Definition}
\newtheorem{assumption}[theorem]{Assumption}
\newcommand{\RR}{\mathbb{R}}
\newcommand{\set}[1]{{\left\{{#1}\right\}}}
\newcommand{\setc}[2]{\left\{\,#1 \;\vert\; #2\,\right\}}
\newcommand{\pow}[1]{\mathcal{P}(#1)}
\newcommand{\abs}[1]{\left| #1 \right|}
\newcommand{\norm}[1]{\left\lVert#1\right\rVert}
\newcommand{\linfnorm}[1]{\norm{#1}_{\infty}}
\newcommand{\PP}{\mathbb{P}}
\newcommand{\EE}{\mathbb{E}}
\newcommand{\EEbracket}[1]{\EE\left\{ #1 \right\}}
\DeclareMathOperator{\Var}{Var}
\DeclareMathOperator{\Cov}{Cov}
\newcommand{\ind}[1]{\mathbbm{1}\left(#1\right)}
\newcommand{\mvn}[2]{\mathrm{MVN}(#1, #2)}
\newcommand{\lk}{\mathbf{lk}}
\newcommand{\cplx}{\mathcal{L}}
\NewDocumentCommand{\gnp}{O{n} O{p}}{{\textbf G}(#1,#2)}
\NewDocumentCommand{\xnp}{O{n} O{p}}{{\textbf X}(#1,#2)}
\newcommand{\I}{\mathbb{I}}
\newcommand{\D}{\mathbb{D}}
\newcommand{\testzero}{\mathcal{H}_d}
\title[Multivariate Central Limit Theorems for Random Clique Complexes]{Multivariate Central Limit Theorems for Random Clique Complexes}
\author{Tadas Tem\v{c}inas}\address[TT]{Department of Statistics, University of Oxford}\email{tadas.temcinas@keble.ox.ac.uk}
\author{Vidit Nanda}\address[VN]{Mathematical Institute, University of Oxford}\email{nanda@maths.ox.ac.uk}
\author{Gesine Reinert}\address[GR]{Department of Statistics, University of Oxford, and the Alan Turing Institute, London}\email{reinert@stats.ox.ac.uk}
\begin{document}
	\maketitle
	
	\begin{abstract}
		Motivated by open problems in applied and computational algebraic topology, we establish multivariate normal approximation theorems for three random vectors which arise organically in the study of random clique complexes. These are:
		\begin{enumerate}
		\item the vector of critical simplex counts attained by a lexicographical Morse matching,
		\item the vector of simplex counts in the link of a fixed simplex, and 
		\item the vector of total simplex counts.
		\end{enumerate}
		The first of these random vectors forms a cornerstone of modern homology algorithms, while the second one provides a natural generalisation for the notion of vertex degree, and the third one may be viewed from the perspective of $U$-statistics. To obtain distributional approximations for these random vectors, we extend the notion of  dissociated sums to a multivariate setting and prove a new central limit theorem for such sums using Stein's method.
		
		\medskip
		
		\noindent {\bf Keywords:} Stein's method, multivariate normal approximation, discrete Morse theory, random graphs, random simplicial complexes
		
		\medskip
		
		\noindent {\bf MSC:} 60F05 Central limit and other weak theorems, 60D05 Geometric probability, stochastic geometry, random sets, 05C80 Random graphs (graph-theoretic aspects).
	\end{abstract}
	
	\section{Introduction}
	
   Methods from applied and computational algebraic topology have recently found substantial applications in the analysis of nonlinear and unstructured datasets \cite{ghrist:08, tda2}. The modus operandi of topological data analysis is to first build a nested family of simplicial complexes around the elements of a dataset, and to then compute the associated persistent homology barcodes \cite{edelsbrunner_comp_top}. Of central interest, when testing hypotheses under this paradigm, is the question of what homology groups to expect when the input data are randomly generated. Significant efforts have therefore been devoted to answering this question for various models of noise, giving rise to the field of {\em stochastic topology} \cite{kahle2011random, bobrowski2018topology, kahle2009topology, crackle, costa2016large}. Our work here is a contribution to this area at the interface between probability theory and algebraic topology.

A cornerstone for statistical inference, beyond providing expectations, are distributional approximations.	This paper establishes the first multivariate normal approximations to three important counting problems in stochastic topology; as these approximations are based on Stein's method, explicit bounds on the approximation errors are provided.
	Our starting point is the ubiquitous  graph model $\gnp$; a graph $G$ chosen from this model has as its vertex set $[n] = \set{1,2,\ldots,n}$, and each of its possible $\binom{n}{2}$ edges is included independently with probability $p \in [0,1]$. It was established by Erd\H{o}s and R\'enyi in \cite{erdos1959random} that $p = \log(n)/n$ is a {\em sharp threshold} for connectivity in $\gnp$, in the sense that the following assertions hold for any random graph $G \sim \gnp$ and every arbitrarily small $\epsilon > 0$: if $p$ exceeds $(1+\epsilon)\cdot{\log(n)}/{n}$, then $G$ is connected with high probability. Conversely, if $p$ is smaller than $(1-\epsilon)\cdot{\log(n)}/{n}$, then $G$ is disconnected with high probability. 
	
	A natural higher-order generalisation of $\gnp$ is furnished by the random clique complex model $\xnp$, whose constituent complexes $\cplx$ are constructed as follows. One first selects an underlying graph $G \sim \gnp$, and then deterministically fills out all $k$-cliques in $G$ with $(k-1)$-dimensional simplices for $k \geq 3$. Higher connectivity is now measured by the Betti numbers $\beta_k(\cplx)$, which are ranks of rational homology groups $\text{H}_k(\cplx;\mathbb{Q})$ --- in particular, $\beta_0(\cplx)$ equals the number of connected components of the underlying random graph $G$. In \cite{kahle2014sharp}, Kahle proved the following far-reaching generalisation of the Erd\H{o}s-R\'enyi connectivity result: for each $k \geq 1$ and $\epsilon > 0$,
	\begin{enumerate}
		\item if 
		\[
		p \geq \left[\left(\frac{k}{2}+1+\epsilon\right) \cdot \frac{\log(n)}{n}\right]^{{1}/{(k+1)}},
		\] 
		then $\beta_k(\cplx) = 0$ with high probability; and moreover,
		\item if 
		\[
		\left[\frac{k+1+\epsilon}{n}\right]^{1/k} \leq p \leq \left[\left(\frac{k}{2}+1-\epsilon\right) \cdot\frac{\log(n)}{n}\right]^{1/(k+1)},
		\]
		then $\beta_k(\cplx) \neq 0$ with high probability.
	\end{enumerate}

	With this result in mind, we motivate and describe three random vectors pertaining to $\cplx \sim \xnp$; the normal approximation of these three random vectors will be our focus in this paper. All three are denoted $T = (T_1,\ldots,T_d)$ for an integer $d > 0$.
	 
		\subsection*{Random Vector 1: Critical Simplex Counts}
			
	The computation of Betti numbers $\beta_k(\cplx)$ begins with the chain complex	
	\[
	\xymatrixcolsep{.55in}
	\xymatrix{
		\cdots \ar@{->}[r]^{d_{k+1}} & C_k \ar@{->}[r]^{d_k} & C_{k-1} \ar@{->}[r]^{d_{k-1}} & \cdots \ar@{->}[r]^{d_2} & C_1 \ar@{->}[r]^{d_1} & C_0. 
	}
	\] 
	Here $C_k$ is a vector space whose dimension equals the number of $k$-simplices in  $\cplx$, while $d_k:C_k \to C_{k-1}$ is an incidence matrix encoding which $(k-1)$-simplices lie in the boundary of a given $k$-simplex. These matrices satisfy the property that every successive composite $d_{k+1} \circ d_k$ equals zero, and $\beta_k(\cplx)$ is the dimension of the quotient vector space $\ker d_k/\text{img }d_{k+1}$. Thus, one is required to diagonalise the matrices $\set{d_k:C_k \to C_{k-1}}$ via row and column operations, which is a straightforward task in principle. Unfortunately, Gaussian elimination on an $m \times m$ matrix incurs an $O(m^3)$ cost, which becomes prohibitive when facing simplicial complexes built around large data sets \cite{roadmap_persistence}. The standard remedy is to construct a much smaller chain complex which has the same homology groups, and by far the most fruitful mechanism for achieving such homology-preserving reductions is {\bf discrete Morse theory} \cite{forman2002user, mischaikow2013morse, henselman, lampret}. 
	
	The key structure here is that of an {\em acyclic partial matching}, which pairs together certain adjacent simplices of $\cplx$; and the homology groups of $\cplx$ may be recovered from a chain complex whose vector spaces are spanned by unpaired, or {\em critical}, simplices. One naturally seeks an {optimal} acyclic partial matching on $\cplx$ which admits the fewest possible critical simplices. Unfortunately, the optimal matching problem is computationally intractable to solve \cite{joswig} even approximately \cite{bauer} for large $\cplx$. 	Our third random vector is obtained by letting $T_k$ equal the number of critical $k$-simplices for a specific type of acyclic partial matching on $\cplx$, called the {\em lexicographical} matching. Knowledge of this random vector serves to simultaneously quantify the benefit of using discrete Morse theoretic reductions on random simplical complexes and to provide a robust null model by which to measure their efficacy on general (i.e., not necessarily random) simplicial complexes.

	\subsection*{Random Vector 2: Link Simplex Counts}
	
	The {\em link} of a simplex $t$ in $\cplx$, denoted $\lk(t)$,  consists of all simplices $s$ for which the union $s \cup t$ is also a simplex in $\cplx$ and the intersection $s \cap t$ is empty. The link of $t$ forms a simplicial complex in its own right; and if we restrict attention to the underlying random graph $G$, then the link of a vertex is precisely the collection of its neighbours. Therefore, the Betti numbers $\beta_k(\lk(t))$ generalise the degree distribution for vertices of random graphs in two different ways --- one can study neighbourhoods of higher-dimensional simplices by increasing the dimension of $t$, and one can examine higher-order connectivity properties by increasing the homological dimension $k$. The second random vector of interest to us here is obtained by letting $T_k$ equal the number of $k$-simplices lying in the link of a fixed simplex $t$ in $\cplx$,  given that $t$ indeed is a simplex in the random complex. As far as we are aware, ours is the first work that studies this random vector. A different conditional distribution, which follows directly from results on subgraph counts in $\gnp$, has been studied before, see Remark \ref{remark:link_counts}.
	
    There are compelling reasons to better understand the combinatorics and topology of such links from a probabilistic viewpoint. For instance, the fact that the link of a $k$-simplex in a triangulated $n$-manifold is always a triangulated sphere of dimension $(n-k-1)$ has been exploited to produce canonical stratifications of simplicial complexes into homology manifolds \cite{strat2, strat1}. Knowledge of simplex counts (and hence, Betti numbers) of links would therefore form an essential first step in any systematic study involving canonical stratifications of random clique complexes. 
	
    \subsection*{Random Vector 3: Total Simplex Counts}
		
	The strategy employed in Kahle's proof of the second assertion above involves first checking that the expected number of $k$-simplices in $\cplx \sim \xnp$ is much larger than the expected number of simplices of dimensions $k \pm 1$ whenever $p$ lies in the range indicated by (2). Therefore, one may combine the Morse inequalities with the linearity of expectation in order to guarantee that the expected $\beta_k(\cplx)$ is nonzero --- see \cite[Section 4]{kahle2014sharp} for details. To  facilitate more refined analysis and estimates of this sort, the first random vector we study in this paper is obtained by letting $T_k$ equal the total number of $k$-dimensional simplices in $\cplx$.
	
	Since $T_k$ is precisely the number of $(k+1)$-cliques in $G \sim \gnp$, this random vector falls within the purview of {\em generalised $U$-statistics}. We extend results from \cite{janson1991asymptotic} 
	to show not only distributional convergence asymptotically but a stronger result, detailing explicit non-asymptotic bounds on the approximation. Several interesting problems can be seen as special cases --- these include classical $U$-statistics \cite{lee2019u, korolyuk2013theory}, monochromatic subgraph counts of inhomogeneous random graphs with independent random vertex colours, and the number of overlapping patterns in a sequence of independent Bernoulli trials. To the best of our knowledge, this is the first multivariate normal approximation result with explicit bounds where the sizes of the subgraphs are permitted to increase with $n$.

	\subsection*{Main Results} 
	
	The central contributions of this work are multivariate normal approximations for all three random vectors $T$ described above. For the purposes of these introductory remarks, we will restrict attention to the case where $T$ is the vector of critical simplex counts. Letting $\{Y_{i,j}\}_{1 \leq i < j \leq n}$ be a sequence of i.i.d. Bernoulli variables, the $k$-th component is 
	\[
	T_k = \sum_{\substack{s \subset [n] \\ |s|=k}}~ \prod_{i \neq j \in s} Y_{i,j} \left[ \prod_{i=1}^{\min(s) - 1}\left(1 - \prod_{j \in s}Y_{i,j}\right) - \prod_{i=1}^{\min(s) - 1}\left(  1 - \prod_{j \in s_{-}} Y_{i,j}\right)  \right].
	\]
	This variable, which we discuss in Section \ref{section:crit_lexi}, arises naturally in stochastic topology \cite{bobrowski2022random, bobrowski2018topology} but has been poorly studied from a distributional approximation perspective. To the best of our knowledge, only the expected value of a closely-related random variable has been calculated (see \cite[Section 8]{bauer2021parameterized}). While there is no shortage of multivariate normal approximation theorems \cite{fang2016multivariate, raivc2004multivariate, meckes2009stein, chen2011normal}, the existing ones are not sufficiently fine-grained for our purposes. We therefore return to the pioneering work of Barbour, Karo\'{n}ski, and Ruci\'{n}ski \cite{barbour1989central}, who proved a univariate central limit theorem (CLT) for a decomposable sum of random variables using Stein's method, treating the case of dissociated sums as a special case. Our approximation result, described below, forms a new extension of their ideas to the multivariate setting, and may be of independent interest.

	Let $n$ and $d$ be positive integers. For each $i \in [d] =:\{1, 2, \ldots, d\}$, we fix an index set $\I_i \subset [n] \times \set{i}$ and consider the union of disjoint sets $\I =: \bigcup_{i \in [d]} \I_i$. Associate to each such $s  = (k,i) \in \I$ a real centered  random variable $X_s$ and form for each $i \in [d]$ the sum
	\[
	W_i \coloneqq \sum_{s \in \I_i} X_s.
	\] 
	Consider the resulting random vector $W = (W_1,\ldots,W_d) \in \RR^d$. The following notion is a natural multivariate generalisation of the dissociated sum from \cite{moginley1975dissociated}; see also \cite{barbour1989central}.
	
	\begin{definition}\label{def:disrand}
		We call $W$ a {\bf vector of dissociated sums} if for each $s \in \I$ and $j \in [d]$ there exists a {\em dependency neighbourhood} $\D_j(s) \subset \I_j$ satisfying three criteria:
		\begin{enumerate}
			\item the difference $\left(W_j - \sum_{u \in \D_j(s)} X_u\right)$ is independent of $X_s$; 
			\item for each $t \in \I$, the quantity $\left(W_j - \sum_{u \in \D_j(s)} X_u - \sum_{v \in \D_j(t) \setminus \D_j(s)} X_v\right)$ is independent of the pair $(X_s,X_t)$; and finally,
			\item $X_s$ and $X_t$ are independent if $t \not\in \bigcup_j \D_j(s)$. 
		\end{enumerate}
	\end{definition}
	
	Let $W$ be a vector of dissociated sums as defined above.  For each $s\in \I$, by construction, the sets  $\D_j(s), j \in [d]$ are disjoint (although for $s \ne t$, the sets $\D_j(s)$ and $\D_j(t)$ may not be disjoint). We write $\D(s) = \bigcup_{j \in [d]}\D_j(s)$ for the disjoint union of these of dependency neighbourhoods. With this preamble in place, we state our main result.

	\begin{theorem}\label{theorem:mvn_dissociated_decomp_approx}
		Let $h: \RR^d \to \RR$ be any three times continuously differentiable function whose third partial derivatives are Lipschitz continuous and bounded. Consider a standard $d$-dimensional Gaussian vector $Z \sim \mvn{0}{\text{\rm Id}_{d \times d}}$. Assume that for all $s \in \I$, we have $\EEbracket{X_s} = 0$ and $\EE \abs {X_s^3} < \infty.$ Then, for any vector of dissociated sums $W \in \RR^d$ with a positive semi-definite covariance matrix $\Sigma$,
		\[
		\abs{\EE h(W) - \EE h(\Sigma^{\frac{1}{2}}Z)} \leq B_{\ref{theorem:mvn_dissociated_decomp_approx}} \sup_{i, j, k \in [d]} \linfnorm{\frac{\partial^3 h}{\partial x_i \partial x_j \partial x_k}},
		\]
		where $B_{\ref{theorem:mvn_dissociated_decomp_approx}} = B_{\ref{theorem:mvn_dissociated_decomp_approx}.1} + B_{\ref{theorem:mvn_dissociated_decomp_approx}.2}$ is the sum given by
		\begin{align*}
		B_{\ref{theorem:mvn_dissociated_decomp_approx}.1} &\coloneqq 
		\frac{1}{3}\sum_{s \in \I} \sum_{t, u \in \D(s)} \hspace{-.4em} \left(\frac{1}{2}\EE\abs{X_sX_tX_u}+\EE\abs{X_sX_t} \EE\abs{X_u}\right) \\
		B_{\ref{theorem:mvn_dissociated_decomp_approx}.2} &\coloneqq  \frac{1}{3} \sum_{s \in \I} \sum_{t \in \D(s)} \sum_{v \in \D(t) \setminus \D(s)} \hspace{-.4em} \left(\EE\abs{X_sX_tX_v} + \EE\abs{X_sX_t} \EE\abs{X_v}\right). 
		\end{align*}
	\end{theorem}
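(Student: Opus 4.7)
My approach is to develop Stein's method via the Ornstein--Uhlenbeck generator whose stationary law is $\mvn{0}{\Sigma}$, and then to dismantle the resulting identity using the dependency neighbourhoods $\D_j(s)$ in the spirit of Barbour--Karo\'{n}ski--Ruci\'{n}ski~\cite{barbour1989central}, now adapted to vector-valued sums. I would work with the generator
\[
\mathcal{A} f(w) \coloneqq \sum_{i,j=1}^{d}\Sigma_{ij}\,\partial_i\partial_j f(w) - \sum_{i=1}^{d} w_i\,\partial_i f(w)
\]
and the Mehler solution
\[
f(w) \coloneqq -\int_{0}^{\infty}\Big[\EE h\!\big(e^{-t}w + \sqrt{1-e^{-2t}}\,\Sigma^{1/2}Z\big) - \EE h(\Sigma^{1/2}Z)\Big]\,dt,
\]
which satisfies $\mathcal{A} f = h - \EE h(\Sigma^{1/2}Z)$. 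Differentiating three times under the integral and using $\int_0^\infty e^{-3t}\,dt = 1/3$ yields the key bound $\sup_{i,j,k}\|\partial_i\partial_j\partial_k f\|_\infty \leq \tfrac{1}{3}\sup_{i,j,k}\|\partial_i\partial_j\partial_k h\|_\infty$, which is the source of the prefactor $1/3$ in every term.

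\textbf{Key identity.} Writing $\EE h(W) - \EE h(\Sigma^{1/2}Z) = \EE[\mathcal{A} f(W)]$, I would split
\[
\EE[\mathcal{A} f(W)] = \sum_{i,j}\Sigma_{ij}\,\EE[\partial_i\partial_j f(W)] - \sum_{i=1}^d\sum_{s\in\I_i}\EE[X_s\,\partial_i f(W)].
\]
Property (3) forces $\EE[X_sX_t]=0$ whenever $t\notin\D(s)$, giving the clean identity $\Sigma_{ij} = \sum_{s\in\I_i}\sum_{u\in\D_j(s)}\EE[X_sX_u]$. For each $s$, using $\EE[X_s]=0$ and property (1),
\[
\EE[X_s\,\partial_i f(W)] = \EE\!\left[X_s\bigl(\partial_i f(W) - \partial_i f(W^{(s)})\bigr)\right],\qquad W^{(s)} \coloneqq W - \sum_{v\in\D(s)}X_v.
\]

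\textbf{Two layers of Taylor expansion.} I would Taylor expand $\partial_i f$ about $W^{(s)}$ to second order, with coordinate-wise increments $W_j - W^{(s)}_j = \sum_{u\in\D_j(s)}X_u$. The quadratic remainder, after multiplying by $X_s$ and taking expectations, is dominated by $\tfrac{1}{2}\sum_{t,u\in\D(s)}\EE|X_sX_tX_u|\cdot\|\partial^{(3)}f\|_\infty$, which yields the $\tfrac{1}{2}\EE|X_sX_tX_u|$ contribution to $B_{\ref{theorem:mvn_dissociated_decomp_approx}.1}$. The first-order term equals $\sum_{j}\sum_{u\in\D_j(s)}\EE[X_sX_u\,\partial_i\partial_j f(W^{(s)})]$; to decouple $\partial_i\partial_j f(W^{(s)})$ from the pair $(X_s,X_u)$ I would further expand about $W^{(s,u)} \coloneqq W^{(s)} - \sum_{v\in\D(u)\setminus\D(s)}X_v$. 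Property (2) grants independence of $W^{(s,u)}$ from $(X_s,X_u)$, so the leading piece factors as $\EE[X_sX_u]\,\EE[\partial_i\partial_j f(W^{(s,u)})]$, while the mean-value remainder delivers the $\EE|X_sX_tX_v|$ term of $B_{\ref{theorem:mvn_dissociated_decomp_approx}.2}$. Finally, to align $\EE[\partial_i\partial_j f(W^{(s,u)})]$ with the factor $\EE[\partial_i\partial_j f(W)]$ appearing in the covariance sum, I would run one more mean-value expansion whose increments decompose as $W_k - W^{(s,u)}_k = \sum_{v\in\D_k(s)}X_v + \sum_{v\in\D_k(u)\setminus\D_k(s)}X_v$; the two pieces contribute, respectively, the $\EE|X_sX_t|\EE|X_u|$ summand of $B_{\ref{theorem:mvn_dissociated_decomp_approx}.1}$ and the $\EE|X_sX_t|\EE|X_v|$ summand of $B_{\ref{theorem:mvn_dissociated_decomp_approx}.2}$.

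\textbf{Main obstacle.} The principal difficulty is combinatorial bookkeeping: property (2) is phrased so that independence from $(X_s,X_u)$ requires subtracting only the asymmetric set $\D(u)\setminus\D(s)$ rather than all of $\D(u)$, and one must track these disjoint decompositions consistently across all three Taylor expansions. Reassembling the resulting third-derivative remainders into precisely the two clean sums $B_{\ref{theorem:mvn_dissociated_decomp_approx}.1}$ and $B_{\ref{theorem:mvn_dissociated_decomp_approx}.2}$, with the factor $1/3$ extracted uniformly from the Mehler bound, is where most of the effort will lie.
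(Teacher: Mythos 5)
Your proposal is correct and follows essentially the same route as the paper's proof: the same Stein equation and covariance decomposition $\Sigma_{ij}=\sum_{s\in\I_i}\sum_{u\in\D_j(s)}\EE[X_sX_u]$, the same three-stage Taylor expansion (about $W^{s}$, then $W^{s,t}$, then back to $W$) using properties (1)--(3) exactly as you describe, and the same source of the factor $1/3$ via the third-derivative bound on the Stein solution. The only cosmetic difference is that you write the solution in Mehler/Ornstein--Uhlenbeck semigroup form $\int_0^\infty(\cdots)e^{-3t}\,dt$, whereas the paper uses the equivalent Chatterjee--Meckes parametrisation $\int_0^1(2t)^{-1}t^{3/2}\,dt$; both evaluate to $1/3$.
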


	In the special case where each $W_k$ is a sum of an equal number of i.i.d. random variables and each i.i.d. sequence is independent, the bound in Theorem \ref{theorem:mvn_dissociated_decomp_approx} is optimal with respect to the size $n$ of the sum in each component. However, compared to the CLT from \cite{fang2016multivariate}, the bound is not optimal in the length $d$ of the vector $W$. In any event, the desired CLT for critical simplex counts follows as a corollary to Theorem \ref{theorem:mvn_dissociated_decomp_approx}. We state a simplified version of this result here and note that the full statement and proof have been recorded as Theorem \ref{theorem:crit_lexi_approx} below. In the statement below, $W \in \RR^d$ is an appropriately scaled and centered vector whose $k$-th component counts the number of critical simplices of dimension $k$ for the lexicographical acyclic partial matching on $\cplx \sim \xnp$.
	
	\begin{theorem}\label{theorem:crit_lexi_approx_intro}
		Let $Z \sim \mvn{0}{\text{\rm Id}_{d \times d}}$ and $\Sigma$ be the covariance matrix of $W$.
		Let $h: \RR^d \to \RR$ be three times partially differentiable function whose third partial derivatives are Lipschitz continuous and bounded. Then there is a constant $B_{\ref{theorem:crit_lexi_approx_intro}} >0$ independent of $n$ and a natural number $N_{\ref{theorem:crit_lexi_approx_intro}}$ such that for any $n \geq N_{\ref{theorem:crit_lexi_approx_intro}}$ we have
		\[\abs{\EE h(W) - \EE h(\Sigma^{\frac{1}{2}}Z)} \leq B_{\ref{theorem:crit_lexi_approx_intro}} \sup_{i, j, k \in [d]} \linfnorm{\frac{\partial^3 h}{\partial x_i \partial x_j \partial x_k}} n^{-1}.\]
	\end{theorem}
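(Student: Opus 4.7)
The plan is to obtain Theorem \ref{theorem:crit_lexi_approx_intro} as a direct corollary of Theorem \ref{theorem:mvn_dissociated_decomp_approx}. First, I would rewrite the $k$-th component of the unscaled critical simplex count as $T_k = \sum_{|s|=k+1} T_{k,s}$, with $T_{k,s}$ the summand displayed in the introduction, and define the centered and scaled quantities $X_s := \sigma_k^{-1}(T_{k,s} - \EE T_{k,s})$ with $\sigma_k^2 = \Var T_k$, so that $W_k = \sum_{s \in \I_k} X_s$ is a zero-mean sum with variance of order $1$. Taking $\I_i := \{s \subset [n] : |s| = i+1\} \times \{i\}$ puts $W$ into the precise form required by Theorem \ref{theorem:mvn_dissociated_decomp_approx}.

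Next, I would verify that $W$ is a vector of dissociated sums. Each summand $T_{k,s}$ is a polynomial in the edge variables $\{Y_{i,j}\}$ whose support $E(s)$ consists of the $\binom{k+1}{2}$ edges inside $s$ together with the $(\min(s)-1)(k+1)$ edges from $[\min(s)-1]$ to $s$. Setting
\[
\D_j(s) := \{u \in \I_j : E(u) \cap E(s) \neq \emptyset\},
\]
the three conditions of Definition \ref{def:disrand} follow immediately from the mutual independence of the $Y_{i,j}$: removing from $W_j$ those summands whose edge supports touch $E(s)$ leaves a random variable depending only on edge variables disjoint from $E(s)$, and similarly for the joint statement about pairs.

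The heart of the proof is then the combinatorial and moment bookkeeping needed to bound $B_{\ref{theorem:mvn_dissociated_decomp_approx}.1}$ and $B_{\ref{theorem:mvn_dissociated_decomp_approx}.2}$. Two inputs are required. First, a bound on $|\D_j(s)|$: any $j$-simplex in $\D_j(s)$ shares at least one vertex with $s \cup [\min(s)-1]$ (since the two edge sets meet), so after fixing such a shared vertex at most $j$ vertices remain free, giving $|\D_j(s)| = O(n^j)$. Second, moment bounds of the form $\EE\abs{X_sX_tX_u} \lesssim \sigma_i^{-1}\sigma_j^{-1}\sigma_k^{-1} \, p^{|E(s) \cup E(t) \cup E(u)|}$, derived by expanding $T_{k,s}$ into a product of edge indicators and inclusion-exclusion factors of the form $(1 - \prod Y_{i,\cdot})$ and using independence across disjoint edge sets. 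Combined with the sharp computation of $\sigma_k^2$ already required by the statement, these two estimates feed mechanically into the general bound.

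The main obstacle is the final accounting. To extract the $n^{-1}$ rate one must identify the dominant overlap pattern among triples $(s,t,u)$ with $t,u \in \D(s)$ (and, for $B_{\ref{theorem:mvn_dissociated_decomp_approx}.2}$, among triples with $t \in \D(s)$ and $v \in \D(t) \setminus \D(s)$) and check that the combinatorial multiplicity of each pattern is balanced by the probabilistic suppression coming from $p^{|E(s) \cup E(t) \cup E(u)|}$. The awkward feature peculiar to critical simplices is the role of $\min(s)$: summands depend on edges \emph{below} $s$ as well as inside $s$, so the case analysis must stratify simplices by the position of their minimum vertex and separately handle configurations in which $\min(s), \min(t), \min(u)$ coincide or differ. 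Assuming this case analysis confirms that the leading-order term is $O(n^{-1})$ with smaller error terms absorbed for $n$ sufficiently large, one extracts the threshold $N_{\ref{theorem:crit_lexi_approx_intro}}$ and constant $B_{\ref{theorem:crit_lexi_approx_intro}}$, completing the proof.
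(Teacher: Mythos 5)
Your skeleton coincides with the paper's: centre and scale by $\sigma_k$, exhibit $W$ as a vector of dissociated sums, and feed size and moment estimates into Theorem \ref{theorem:mvn_dissociated_decomp_approx} (the paper uses the vertex-overlap neighbourhoods $\D_j(s)=\setc{(\psi,j)\in\I_j}{\phi\cap\psi\neq\varnothing}$; since a shared edge variable forces a shared vertex of the two simplices, this is essentially equivalent to your edge-support version). The quantitative core of your argument, however, has genuine gaps. First, the moment bound $\EE\abs{X_sX_tX_u}\lesssim(\sigma_i\sigma_j\sigma_k)^{-1}p^{|E(s)\cup E(t)\cup E(u)|}$ is false: each summand is the Bernoulli variable $\ind{s\text{ is critical}}$ with mean $\mu_s=p^{\binom{k+1}{2}}\big[(1-p^{k+1})^{\min(s)-1}-(1-p^k)^{\min(s)-1}\big]$, so already $\EE\abs{X_s}\asymp\sigma_k^{-1}\mu_s$, which for large $\min(s)$ vastly exceeds $p^{|E(s)|}=p^{\binom{k+1}{2}+(\min(s)-1)(k+1)}$. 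The correct estimate (Lemma \ref{lemma:unified_moments}, via Cauchy--Schwarz) is $\{\mu_s\mu_t(1-\mu_s)(1-\mu_t)\}^{1/2}$, decaying only like $(1-p^{k+1})^{(\min(s)+\min(t))/2}$.

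Second, and decisively, with any bound uniform over $s,t,u$ the accounting fails: $|\I_i|=O(n^{i+1})$, $|\D_j(s)|=O(n^j)$, $|\D_k(s)|=O(n^k)$ and $(\sigma_i\sigma_j\sigma_k)^{-1}=O(n^{-i-j-k})$ give $B_{\ref{theorem:mvn_dissociated_decomp_approx}}=O(n)$, which diverges --- this is exactly why the paper applies Theorem \ref{theorem:mvn_dissociated_decomp_approx} directly rather than the uniform Corollary \ref{corollary:mvn_dissociated_decomp_approx}. (Your justification of $|\D_j(s)|=O(n^j)$ via a shared vertex in $s\cup[\min(s)-1]$ in fact only yields $O(n^{j+1})$, since that set has up to $n$ elements; the correct count uses that the shared vertex must lie in $s$ itself.) The rate $n^{-1}$ is recovered only by stratifying $\phi$ and $\psi$ by their minimal vertices $a,b$ and summing the geometric decay of $\{\mu(i,a)\mu(j,b)\}^{1/2}$ in $a$ and $b$, which converts the counts $n^{i+1}$ and $n^{j}$ into effective $n^{i}$ and $n^{j-1}$, whence $n^{i}\cdot n^{j-1}\cdot n^{k}\cdot n^{-i-j-k}=n^{-1}$. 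You gesture at stratifying by $\min(s)$ but defer precisely this computation, and starting from your moment bound it would rest on a false premise. Finally, the input $\Var(T_{k+1})=\Omega(n^{2k})$ that you treat as already available is itself a nontrivial estimate (Lemma \ref{lemma:lower_bound_var_crit_lexi}, proved in the Appendix) and cannot be read off from the statement of the theorem.
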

	
	En route to proving Theorem \ref{theorem:crit_lexi_approx_intro}, we also establish the following properties, which are of direct interest in computational topology. Here we assume that $p \in (0,1)$ and $k \in \{1, 2, \ldots\}$ are constants.
	\begin{enumerate}
		\item The expected number of critical $k$-simplices is one order of $n$ smaller that the expected number of total $k$-simplices; see Lemma \ref{lemma:expect_crit_lexi}.
		\item The variance of the number of critical $k$-simplices is at least of the order $n^{2k}$, as shown in Lemma \ref{lemma:lower_bound_var_crit_lexi}. An upper bound of the same order can be proved similarly. The variance of the total number of $k$-simplices is also of the same order.
		\item Knowing the expected value and the variance one can prove concentration results using different concentration inequalities, for example, Chebyshev's inequality. This would show that not only the expected value of critical simplices is smaller compared to all simplices but also that large deviations from the mean are unlikely, hence implying that the substantial improvement of one order of $n$ is not only expected but also likely.
		\item For counting critical simplices to high accuracy in probability, it is not necessary to check every simplex. Certain simplices have a very small chance of being critical, and can be safely ignored. The probability of this omission causing an error is vanishingly small asymptotically; see Proposition \ref{proposition:crit_lexi_uptoK}.
	\end{enumerate}
	More details are provided in Remark \ref{remark:TDA}. 
	\subsection*{Related Work}
	
	Theorem \ref{theorem:mvn_dissociated_decomp_approx} is not the first generalisation of the results in \cite{barbour1989central} to a multivariate setting, see for example \cite{fang2016multivariate, raivc2004multivariate}. The key advantage of our approach is that it allows for bounds which are non-uniform in each component of the vector $W$. This is useful when, for example, the number of summands in each component are of different order or when the sizes of dependency neighbourhoods in each component are of different order. The applications consdered here are precisely of this type, where the non-uniformity of the bounds is crucial. Moreover, we do not require the covariance matrix $\Sigma$ to be invertible, and can therefore accommodate degenerate multivariate normal distributions.
	
	Another multivariate central limit theorem for \textit{centered} subgraph counts in the more general setting of a random graph associated to a graphon can be found in \cite{kaur2020higher}. That proof is based on Stein's method via a Stein coupling.   Translating this result for uncentered subgraph counts would yield an approximation by a function of a multivariate normal.  In \cite{reinert2010random}, an exchangeable pair coupling led to \cite[Proposition 2]{reinert2010random} which can be specialised to joint counts of edges and triangles; our approximation  significantly generalises this result beyond the case where $k \in \set{1,2}$.
	Several univariate normal approximation theorems for subgraph counts are available; recent developments in this area include \cite{privault2020normal}, which uses Malliavin calculus together with Stein's method, and  \cite{eichelsbacher2021kolmogorov}, which uses the Stein-Tikhomirov method.
	
	\subsection*{Organisation}
	 In Section \ref{section:main_result} we prove our main approximation theorem using smooth test functions and extend the result to non-smooth test functions using a smoothing technique from \cite{gan2017dirichlet}. In Section \ref{sect:preliminaries}, we recall concepts from the theory of simplicial complexes, which we later use. In Section \ref{section:crit_lexi} we prove an approximation theorem for critical simplex counts of lexicographical matchings. Two technical computations required in this Section have been consigned to the Appendix. In Section \ref{section:link} we prove an approximation theorem for count variables of simplices that are in the link of a fixed simplex. In Section \ref{section:gen_u_stat} we introduce a slight generalisation of generalised $U$-statistics for which Theorem \ref{theorem:mvn_dissociated_decomp_approx} gives a CLT with explicit bounds. We then apply the CLT to simplex counts in the random clique complex.
	 
	 \subsection*{Acknowledgements}
	TT acknowledges funding from EPSRC studentship 2275810. VN is supported by the EPSRC grant EP/R018472/1. GR is funded in part by the EPSRC grants EP/T018445/1 and EP/R018472/1. The authors would like to thank Xiao Fang and Matthew Kahle for helpful discussions.
	
	\section{A Multivariate CLT for Dissociated Sums}\label{section:main_result}

	Throughout this paper we use the following notation. Given positive integers $n,m$ we write $[m,n]$ for the set $\set{m,m+1,\ldots,n}$ and $[n]$ for the set $[1,n]$. Given a set $X$ we write $\abs{X}$ for its cardinality, $\pow{X}$ for its powerset, and given a positive integer $k$ we write $C_k = \setc{t \in \pow{[n]}}{|t| = k}$ for the collection of subsets of $[n]$ which are of size $k$. For a function $f: \RR^d \to \RR$ we write  $\partial_{ij}f = \frac{\partial^2f}{\partial x_i \partial x_j}$ and $\partial_{ijk}f = \frac{\partial^3f}{\partial x_i \partial x_j \partial x_k}$. Also, we write $\abs{f}_k = \sup_{i_1, i_2, \ldots, i_k \in [d]} \linfnorm{\partial_{i_1 i_2 \ldots i_k}f}$ for any integer $k \geq 1$, as long as the quantities exist. Here $|| \cdot ||_\infty$ denotes the supremum norm while $|| \cdot ||_2$ denotes the Euclidean norm. The notation $\nabla$ denotes the gradient operator in $\RR^d$. For a positive integer $d$ we define a class of test functions $h: \RR^d \to \RR$, as follows. We say $h \in \testzero$ iff $h$ is three times partially differentiable with third partial derivatives being Lipschitz and $\abs{h}_3 < \infty$. The notation $\text{\rm Id}_{d \times d}$ denotes the $d \times d$ identity matrix. The vertex set of all graphs and simplicial complexes is assumed to be $[n]$. If $s = (x, i) \in \I$ is an element of the index set in Definition \ref{def:disrand}, then we denote the second component of the tuple by $|s|$, that is $|s| \coloneqq i$. We also use Bachmann-Landau asymptotic notation: we say $f(n) = O(g(n))$ iff $\limsup_{n \to \infty} \frac{|f(n)|}{g(n)} < \infty$ and $f(n) = \Omega(g(n))$ iff $\liminf_{n \to \infty} \frac{f(n)}{g(n)} > 0$.
	
	Throughout this section, $W \in \RR^d$ is a vector of dissociated sums in the sense of Definition \ref{def:disrand}, with covariance matrix whose entries are $\Sigma_{ij} = \Cov(W_i, W_j)$ for $(i,j) \in [d]^2$. For each $s \in \I$ we denote by $\D(s) \subset \I$ the disjoint union $\bigcup_{j=1}^d \D_j(s)$. For each triple $(s,t,j) \in \I^2 \times [d]$ we write the set-difference $\D_j(t) \setminus \D_j(s)$ as $\D_j(t;s)$, with $\D(t;s) \subset \I$ denoting the disjoint union of such differences over $j \in [d]$.
	
	\subsection{Smooth Test Functions}
	To prove Theorem \ref{theorem:mvn_dissociated_decomp_approx} we use Stein's method for multivariate normal distributions; for details see for example Chapter 12 in \cite{chen2011normal}. Our proof of Theorem \ref{theorem:mvn_dissociated_decomp_approx} is based on the {\bf Stein characterization} of the multivariate normal distribution: $Z \in \RR^d$ is a multivariate normal $\mvn{0}{\Sigma}$ if and only if the identity
	\begin{align}\label{eq:steinmnv}
	\EEbracket{\nabla^{T} \Sigma \nabla f(Z)-Z^{T} \nabla f(Z)}=0
	\end{align} holds for all twice continuously differentiable $f: \RR^{d} \to \RR$ for which the expectation exists. 
	In particular, we will use the following result based on \cite[Lemma 1 and Lemma 2]{meckes2009stein}. As Lemma 1 and Lemma 2 in \cite{meckes2009stein} are stated  there only for infinitely differentiable test functions, we give the proof here for completeness.
	
	\begin{lemma}[Lemma 1 and Lemma 2 in \cite{meckes2009stein}]
		\label{lemma:mvn_stein_solution}
		Fix $n \geq 2$. Let $h: \RR^{d} \to \RR$ be $n$ times continuously differentiable with $n$-th partial derivatives being Lipschitz and $Z \sim \mvn{0}{\text{\rm Id}_{d \times d}}$. Then, if $\Sigma \in \RR^{d \times d}$ is symmetric positive semidefinite, there exists a solution $f: \RR^{d} \to \RR$ to the equation
		\begin{equation}\label{equation:stein}
		\nabla^{T} \Sigma \nabla f(w)-w^{T} \nabla f(w)=h(w)-\EE h\left(\Sigma^{1 / 2} Z\right), \quad w \in \RR^{d},
		\end{equation}
		such that $f$ is $n$ times continuously differentiable and we have for every $k=1, \ldots, n$: \[\abs{f}_k \leq \frac{1}{k}\abs{h}_k.\]
		
	\end{lemma}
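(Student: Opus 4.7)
The plan is to construct $f$ explicitly as an integral of the multivariate Ornstein--Uhlenbeck (OU) semigroup associated with $\mvn{0}{\Sigma}$, following the strategy of Meckes \cite{meckes2009stein}, and then to verify directly that this formula retains the required regularity and bounds under the weaker smoothness hypothesis here. For $t \geq 0$ and $w \in \RR^d$, set
\[
U_t h(w) := \EE\!\left[h\!\left(e^{-t}w + \sqrt{1-e^{-2t}}\,\Sigma^{1/2}Z\right)\right],
\]
so that $U_0 h = h$ and $U_t h(w) \to \EE h(\Sigma^{1/2}Z)$ as $t\to\infty$. Take
\[
f(w) := -\int_0^\infty \bigl[U_t h(w) - \EE h(\Sigma^{1/2}Z)\bigr]\, dt,
\]
defined up to an additive constant (immaterial for the Stein equation). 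The convergence of this integral will follow from the mean value theorem applied to $h$ inside the expectation, since $e^{-t}w + \sqrt{1-e^{-2t}}\Sigma^{1/2}Z$ tends to $\Sigma^{1/2}Z$ in $L^2$ at rate $O(e^{-t})$ whenever the first partial derivatives of $h$ are bounded.

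The heart of the argument is the semigroup scaling identity, obtained by differentiating under the Gaussian expectation:
\[
\partial^{\alpha}U_t h(w) = e^{-|\alpha|t}\,U_t(\partial^{\alpha}h)(w), \qquad |\alpha| \leq n.
\]
Passing $\partial^{\alpha}$ through $\int_0^\infty dt$ by dominated convergence --- justified for $|\alpha|\geq 1$ by the exponential factor $e^{-|\alpha|t}$ together with boundedness of $\partial^{\alpha}h$ --- one obtains, for $1 \leq k = |\alpha| \leq n$,
\[
\partial^{\alpha}f(w) = -\int_0^\infty e^{-kt}\,U_t(\partial^{\alpha}h)(w)\, dt.
\]
Since $U_t$ is a contraction on bounded continuous functions, $\linfnorm{\partial^{\alpha}f} \leq \linfnorm{\partial^{\alpha}h}\cdot \int_0^\infty e^{-kt}dt = \linfnorm{\partial^{\alpha}h}/k$. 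Maximising over multi-indices of fixed length $k$ yields the claimed bound $\abs{f}_k \leq \abs{h}_k/k$, and continuity of each $\partial^{\alpha}f$ up to order $n$ follows from another dominated convergence argument on the integrand (using the Lipschitz condition on the $n$-th partial derivatives of $h$ for the top-order step).

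To verify the Stein equation itself, I would introduce the OU generator $\mathcal{A}g(w) := \nabla^{T}\Sigma\nabla g(w) - w^{T}\nabla g(w)$ and establish the Dynkin-type relation $\frac{d}{dt}U_t h(w) = \mathcal{A}(U_t h)(w)$ by a standard direct computation inside the expectation. Fubini and the fundamental theorem of calculus then give
\[
\mathcal{A}f(w) = -\int_0^\infty \frac{d}{dt}U_t h(w)\, dt = U_0 h(w) - \lim_{t\to\infty}U_t h(w) = h(w) - \EE h(\Sigma^{1/2}Z),
\]
which is exactly \eqref{equation:stein}. The main technical obstacle is the repeated interchange of differentiation and integration --- first inside the Gaussian expectation defining $U_t h$, and then with respect to the outer Lebesgue integral in $t$ --- carried out with only $C^n$ regularity of $h$ and Lipschitz $n$-th derivatives; each such swap rests on a dominated convergence argument using either the exponential decay factor $e^{-kt}$ or the smooth Gaussian density, and some care is needed at the boundary $t=0$ where $\sqrt{1-e^{-2t}}$ vanishes and $\frac{d}{dt}\sqrt{1-e^{-2t}}$ blows up. The positive semi-definiteness (rather than strict positive-definiteness) of $\Sigma$ poses no additional issue, since only the symmetric square root $\Sigma^{1/2}$, and never the inverse, appears in the construction.
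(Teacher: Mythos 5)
Your construction is the paper's construction in disguise: the substitution $s=e^{-2t}$ turns your $f(w)=-\int_0^\infty[U_th(w)-\EE h(\Sigma^{1/2}Z)]\,dt$ into the formula $f(x)=\int_0^1(2s)^{-1}\EE\{h(\sqrt{s}\,x+\sqrt{1-s}\,\Sigma^{1/2}Z)\}\,ds$ (up to the constant) used in the paper, and your scaling identity $\partial^{\alpha}f=-\int_0^\infty e^{-kt}U_t(\partial^{\alpha}h)\,dt$ with $\int_0^\infty e^{-kt}dt=1/k$ is exactly the paper's bound $\int_0^1(2s)^{-1}s^{k/2}ds=1/k$. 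The only cosmetic difference is that you re-derive the Stein equation via the Ornstein--Uhlenbeck generator where the paper cites Chatterjee--Meckes and Barbour; the argument is correct and essentially identical.
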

	
	\begin{proof} 
		Let $h$ be  as in the assertion. It is shown in Lemma 2.1 in \cite{chatterjee2007multivariate}, which is based on a reformulation of Eq.\,(2.20) in  \cite{barbour1990stein}, that  a solution of \eqref{equation:stein} for $h$ is given by $f(x) = f_h(x) = \int_0^1  \frac{1}{2t}  \EE \{ h( Z_{x,t} ) \} \, dt$, with $Z_{x, t}=\sqrt{t} x+\sqrt{1-t} \Sigma^{1 / 2} Z$. As $h$ has $n$-th partial derivatives being Lipschitz and hence for differentiating $f$ we can bring the derivative inside the integral, it is straightforward to see that the solution $f$ is $n$ times continuously differentiable.
		
		The bound on $\abs{f}_k$ is a consequence of 
		\[\frac{\partial^{k} f}{\partial x_{i_{1}} \cdots \partial x_{i_{k}}}(x)=\int_{0}^{1}(2 t)^{-1} t^{k / 2} \EEbracket{\frac{\partial^{k} h}{\partial x_{i_{1}} \cdots \partial x_{i_{k}}}\left(Z_{x, t}\right)} d t\]
		for any $i_1, i_2, \ldots, i_k$; see, for example, Equation (10) in \cite{meckes2009stein}. Taking the sup-norm on both sides and bounding the right hand side of the equation gives 
		\[\norm{\frac{\partial^{k} f}{\partial x_{i_{1}} \cdots \partial x_{i_{k}}}}_{\infty} \leq \norm{\frac{\partial^{k} h}{\partial x_{i_{1}} \cdots \partial x_{i_{k}}}}_{\infty} \int_{0}^{1}(2 t)^{-1} t^{k / 2}dt \leq \frac{1}{k} \abs{h}_k.\]
	\end{proof}

	Note that neither Lemma \ref{lemma:mvn_stein_solution} nor indeed  Theorem \ref{theorem:mvn_dissociated_decomp_approx} require the covariance matrix $\Sigma$ to be invertible.
	
	\begin{proof}[Proof of Theorem \ref{theorem:mvn_dissociated_decomp_approx}]
		To prove Theorem \ref{theorem:mvn_dissociated_decomp_approx}, we replace $w$ by $W$ in Equation \eqref{equation:stein} and take the expected value on both sides. As a result, we aim to bound the expression 
		\begin{align}\label{eq:steinexp}
		\abs{\EEbracket{\nabla^{T} \Sigma \nabla f(W)-W^{T} \nabla f(W)}} = \abs{ \EEbracket{\sum_{i,j=1}^d \partial_{ij}f(W)\Sigma_{ij} - \sum_{i=1}^d W_i \partial_i f(W)}}
		\end{align}
		where $f$ is a solution to the Stein equation \eqref{equation:stein} for the test function $h$. Since the variables $\set{X_s \mid s \in \I}$ are centered and as  $X_t$ is independent of $X_s$ if $t \not\in \D (s)$, for each $(i,j) \in [d]^2$ we have 
		\begin{align}\label{eq:covdecomp}
		\Sigma_{ij} = \Cov(W_i, W_j) = \hspace{-.2em} \sum_{s \in \I_i} \sum_{t \in \D_j(s)} \hspace{-.2em}\EEbracket{X_sX_t}.
		\end{align}
		
		We now use the decomposition of $\Sigma_{ij}$ from \eqref{eq:covdecomp} in the expression \eqref{eq:steinexp}. For each pair $(s,j) \in \I \times [d]$  and $t \in \D(s)$ we set  $ \D_j(t;s)  = \D_j(t) \setminus  \D_j(s) $  and
		\begin{equation}\label{rem:UV} 
		U_j^s \coloneqq \hspace{-.4em} \sum_{u \in \D_j(s)} \hspace{-.4em} X_u;\quad W_j^s \coloneqq W_j - U_j^s, \quad   \mbox{ and } \quad  V_j^{s,t} \coloneqq \hspace{-1em} \sum_{v \in \D_j(t;s)} \hspace{-1em} X_v; \quad W_j^{s,t} \coloneqq W_j^s - V_j^{s,t}. 
		\end{equation} 
		By Definition \ref{def:disrand}, $W_j^s $ is independent of $X_s$, while $W_j^{s,t}$  is independent of the pair $(X_s,X_t)$.
		
		Next we decompose the r.h.s. of \eqref{eq:steinexp};
		\[
		\left|\EEbracket{\sum_{i=1}^d W_i \partial_i f(W) - \sum_{i,j=1}^d \partial_{ij}f(W)\Sigma_{ij}}\right| = \left|R_1 + R_2 + R_3 \right|; 
		\]
		with
		\begin{align}
		R_1 &= \sum_{i=1}^d \EEbracket{W_i \partial_i f(W)} - \sum_{s \in \I} \sum_{j=1}^d  \EEbracket{X_s U^s_j \partial_{|s|j}f(W^s)}, \label{eq:R1}\\
		R_2 &= \sum_{s \in \I} \sum_{j=1}^d \EEbracket{X_s U^s_j \partial_{|s|j}f(W^s)} - \sum_{s \in \I}\sum_{t \in \D_j(s)} \EEbracket{X_sX_t} \EE\partial_{|s||t|}f(W^{s,t}),\label{eq:R2} \text{ and}\\
		R_3 &= \sum_{s \in \I} \sum_{t \in \D(s)}  \EEbracket{X_{s} X_{t}} \left( \EE\partial_{|s||t|}f(W^{s,t}) - \EE\partial_{|s||t|}f(W) \right).
		\end{align}
		Here we recall that if $s=(k,i)$ then  $|s|=i \in [d]$.
		
		As with the vector of dissociated sums $W \in \RR^d$ itself, we can assemble these differences into random vectors. Thus, $W^s \in \RR^d$ is  $(W^s_1,\ldots,W^s_d)$, and similarly $W^{s,t} = (W^{s,t}_1,\ldots W^{s,t}_d)$. In the next three claims, we provide bounds on $R_i$ for $i \in [3]$. 
		
		\begin{claim}\label{lemma:R_1_bound} The absolute value of the expression $R_1$ from \eqref{eq:R1} is bounded above by
			\[
			|R_1| \leq \left(\frac{1}{2} \sum_{s \in \I} \sum_{t \in \D(s)} \sum_{u \in \D(s)} \EE \abs{X_sX_tX_u}\right)  \abs{f}_3.
			\]
		\end{claim}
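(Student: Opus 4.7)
The plan is to recognise $R_1$ as the expected remainder of a first-order Taylor expansion of the function $w \mapsto \partial_{|s|} f(w)$ around the point $W^s$, after exploiting the independence property (1) of Definition \ref{def:disrand}. Since $W_i = \sum_{s \in \I_i} X_s$, linearity gives
\[
\sum_{i=1}^d \EE\{W_i \partial_i f(W)\} = \sum_{s \in \I} \EE\{X_s\, \partial_{|s|} f(W)\}.
\]

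Next, I would apply the multivariate Taylor expansion of $\partial_{|s|} f$ around $W^s$ evaluated at $W = W^s + U^s$, where $U^s = (U_1^s,\ldots,U_d^s)$ as in \eqref{rem:UV}. This yields
\[
\partial_{|s|} f(W) = \partial_{|s|} f(W^s) + \sum_{j=1}^d U_j^s \,\partial_{|s|\,j} f(W^s) + \text{Rem}_s,
\]
where the Taylor remainder satisfies $|\text{Rem}_s| \leq \tfrac{1}{2}\sum_{j,k=1}^d |U_j^s||U_k^s|\,|f|_3$. The zeroth-order term contributes nothing after multiplication by $X_s$ and taking expectation: by Definition \ref{def:disrand}(1), $W^s$ is independent of $X_s$, so
\[
\EE\{X_s\,\partial_{|s|} f(W^s)\} = \EE\{X_s\}\cdot \EE\{\partial_{|s|} f(W^s)\} = 0,
\]
since $X_s$ is centered. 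The first-order term $\sum_{s,j} \EE\{X_s U_j^s \,\partial_{|s|\,j} f(W^s)\}$ is exactly the quantity subtracted off in the definition \eqref{eq:R1} of $R_1$, so it cancels as well. What remains is $R_1 = \sum_{s \in \I} \EE\{X_s \cdot \text{Rem}_s\}$.

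To finish, I would bound $|U_j^s| \le \sum_{u \in \D_j(s)} |X_u|$ and use the fact that the $\D_j(s)$ are disjoint in $j$, so that
\[
\sum_{j,k=1}^d |U_j^s||U_k^s| \;\le\; \sum_{t,u \in \D(s)} |X_t||X_u|.
\]
Combining this with the remainder bound and taking expectations yields
\[
|R_1| \;\le\; \frac{1}{2}\,|f|_3 \sum_{s \in \I} \sum_{t,u \in \D(s)} \EE\,|X_s X_t X_u|,
\]
which is the desired inequality.

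No step here is particularly deep: the only mild subtlety is making sure to invoke the independence clause from Definition \ref{def:disrand} in the correct direction (property (1), applied to a single index $s$) so that the zeroth-order Taylor term vanishes. The rest is a textbook application of Taylor's theorem with a second-order Lagrange-style remainder, combined with the triangle inequality on the sums defining $U_j^s$.
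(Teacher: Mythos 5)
Your proposal is correct and follows essentially the same route as the paper: a second-order Taylor expansion of $\partial_{|s|}f$ around $W^s$, with the zeroth-order term killed by Definition \ref{def:disrand}(1) together with $\EE X_s = 0$, the first-order term cancelling against the subtracted sum in \eqref{eq:R1}, and the remainder bounded via $\norm{\partial_{ijk}f}_\infty \le \abs{f}_3$ and the disjointness of the $\D_j(s)$ over $j$. No gaps.
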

		\begin{proof}
			Note that
			\begin{align*}
			R_1 &= \sum_{i=1}^d \sum_{s \in \I_i}\EEbracket{X_s \partial_i f(W)} - \sum_{s \in \I} \sum_{j=1}^d  \EEbracket{X_s U^s_j \partial_{|s|j}f(W^s)} & 
			\\
			&= \sum_{i=1}^d \sum_{s \in \I_i}\left(\EEbracket{X_s \partial_i f(W)} - \sum_{j=1}^d  \EEbracket{X_s U^s_j \partial_{ij}f(W^s)}\right). & 
			\end{align*} 
			For each $s \in \I_{i}$, it follows from \eqref{rem:UV} that $W = U^s + W^s$. Using the Lagrange form of the remainder term in Taylor's theorem, we obtain
			\[
			\partial_i f(W) = \sum_{j=1}^d \partial_{ij}f(W^s) U^s_j + \frac{1}{2} \sum_{j,k=1}^d \partial_{ijk}f(W^s + \theta_s U^s) U^s_j U^s_k
			\]
			for some random $\theta_s \in (0,1)$. Using this Taylor expansion in the expression for $R_1$, we get the following four-term summand $S_{i,s}$ for each $i \in [d]$ and $s \in \I_i$:
			\begin{align*}
			S_{i,s} &= \EEbracket{X_s \partial_i f(W^s)}  + \sum_{j=1}^d \EEbracket{X_s\partial_{ij}f(W^s) U^s_j} \\
			& + \frac{1}{2} \sum_{j,k=1}^d \EEbracket{X_s\partial_{ijk}f(W^s + \theta_s U^s) U^s_j U^s_k}
			- \sum_{j=1}^d \EEbracket{X_s  \partial_{ij}f(W^s)U^s_j}.
			\end{align*}
			The second and fourth terms cancel each other. Recalling that $X_s$ is centered by definition and independent of $W^s$ by Definition \ref{def:disrand}, the third term also vanishes and
			\[
			R_1 = \sum_{i=1}^d\sum_{s \in \I_i} S_{i,s} = 
			\frac{1}{2} \sum_{i,j,k=1}^d\sum_{s \in \I_i} \EEbracket{X_s\partial_{ijk}f(W^s + \theta_s U^s) U^s_j U^s_k}.
			\]
			Recalling that $\norm{\partial_{ijk}f}_{\infty} \leq \abs{f}_3$ and that $U^s_j = \sum_{t \in \D_j(s)}X_t$, we have:
			\begin{align*}
			|R_1| &\leq  \frac{1}{2} \sum_{i,j,k=1}^d \sum_{s \in \I_i} \EE \abs{X_s \partial_{ijk}f(W^s + \theta_s U^s) U^s_j U^s_k} \\
			&\leq \frac{\abs{f}_3}{2} \sum_{i,j,k=1}^d \sum_{s \in \I_i} \EE \abs{X_{s} \sum_{t \in \D_j(s)} X_{t} \sum_{u \in \D_k(s)} X_{u}}\\
			&\leq \frac{\abs{f}_3}{2} \sum_{s \in \I} \sum_{t \in \D(s)} \sum_{u \in \D(s)} \EE \abs{X_{s} X_{t} X_{s}},
			\end{align*}
			as desired.
		\end{proof}
		
		\begin{claim}\label{lemma:R_2_bound} The absolute value of the expression $R_2$ from \eqref{eq:R2} is bounded above by
			\[
			|R_2| \leq \left( \sum_{s \in \I} \sum_{t \in \D(s)} \sum_{u \in \D(t;s)}  \EE \abs{X_{s} X_{u} X_{r}} \right)  \abs{f}_3.
			\]
		\end{claim}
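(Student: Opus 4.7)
The plan is to exploit the decomposition $U_j^s = \sum_{t \in \D_j(s)} X_t$ to rewrite the first piece of $R_2$ as a triple sum and then Taylor-expand the inner derivative to first order around the point $W^{s,t}$, where the independence hypotheses of Definition \ref{def:disrand} allow the zeroth-order Taylor term to cancel exactly against the second piece of $R_2$.

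First I would unify the two sums in the definition of $R_2$ by expanding $U_j^s$ so that
\[
R_2 = \sum_{s \in \I} \sum_{j=1}^{d} \sum_{t \in \D_j(s)} \Bigl( \EEbracket{X_s X_t \partial_{|s|j}f(W^s)} - \EEbracket{X_s X_t} \EE \partial_{|s|j}f(W^{s,t}) \Bigr).
\]
Next I would fix $(s,j,t)$ and use the identity $W^s = W^{s,t} + V^{s,t}$ from \eqref{rem:UV}. Applying Taylor's theorem with the Lagrange form of the remainder to $\partial_{|s|j}f$ in the direction $V^{s,t}$ gives
\[
\partial_{|s|j}f(W^s) = \partial_{|s|j}f(W^{s,t}) + \sum_{k=1}^{d} \partial_{|s|jk}f\bigl(W^{s,t} + \theta_{s,t} V^{s,t}\bigr) V^{s,t}_k
\]
for some random $\theta_{s,t} \in (0,1)$. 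Multiplying by $X_s X_t$ and taking expectations, the independence of $W^{s,t}$ from the pair $(X_s, X_t)$ (criterion (2) of Definition \ref{def:disrand}) yields
\[
\EEbracket{X_s X_t \partial_{|s|j}f(W^{s,t})} = \EEbracket{X_s X_t}\, \EE \partial_{|s|j}f(W^{s,t}),
\]
so the zeroth-order contribution cancels the subtracted term in $R_2$ exactly.

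The remaining first-order term leaves
\[
R_2 = \sum_{s \in \I}\sum_{j=1}^{d}\sum_{t \in \D_j(s)}\sum_{k=1}^{d} \EEbracket{X_s X_t V^{s,t}_k\, \partial_{|s|jk}f\bigl(W^{s,t} + \theta_{s,t} V^{s,t}\bigr)},
\]
and I would bound this by pulling out $\abs{f}_3$, triangle-inequality'ing the expectation, and expanding $V^{s,t}_k = \sum_{v \in \D_k(t;s)} X_v$. Collapsing the $j$- and $k$-sums into unions yields the stated bound
\[
|R_2| \leq \abs{f}_3 \sum_{s \in \I} \sum_{t \in \D(s)} \sum_{v \in \D(t;s)} \EE\abs{X_s X_t X_v}.
\]

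The main technical subtlety, rather than an obstacle, is the careful bookkeeping of the disjoint-union conventions $\D(s) = \bigsqcup_j \D_j(s)$ and $\D(t;s) = \bigsqcup_j \D_j(t;s)$ so that the inner double sum over $(j,k)$ collapses correctly without overcounting. Everything else is a straightforward Taylor argument once the right Taylor-expansion point $W^{s,t}$ has been identified; this choice is dictated precisely by criterion (2) of Definition \ref{def:disrand}, which is exactly what is needed to kill the first-order term.
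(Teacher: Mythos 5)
Your proposal is correct and follows essentially the same route as the paper's proof: expand $U_j^s$ to unify the two sums, Taylor-expand $\partial_{|s||t|}f(W^s)$ about $W^{s,t}$ with the Lagrange remainder, use criterion (2) of Definition \ref{def:disrand} to cancel the zeroth-order term against the subtracted term, and bound the first-order remainder by $\abs{f}_3$ after expanding $V^{s,t}_k$. (You also correctly write the final bound as $\EE\abs{X_sX_tX_v}$, fixing the index typo in the claim's statement.)
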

		\begin{proof}
			
			Recalling that $U^s_j = \sum_{t \in \D_j(s)} X_{t}$ and $\D(s) = \bigcup_{j=1}^d \D_j(s)$,
			\begin{align*}
			R_2 &= \sum_{s \in \I} \sum_{t \in \D(s)} \left\{ \EEbracket{X_{s} X_{t} \partial_{\abs{s}\abs{t}}f(W^s)} - \EEbracket{X_{s}X_{t}} \EEbracket{\partial_{\abs{s}\abs{t}}f(W^{s,t})} \right\}.
			\end{align*} 
			
			Fix $s \in \I$ and $t \in \D_j(s)$.
			Recall that by \eqref{rem:UV}, $W^s = W^{s,t} + V^{s,t}$. Using the Lagrange form of the remainder term in Taylor’s theorem, we obtain: 
			\[
			\partial_{\abs{s}\abs{t}} f(W^s) = \partial_{\abs{s}\abs{t}} f(W^{s,t}) + \sum_{k=1}^d \partial_{\abs{s}\abs{t}k}f(W^{s,t} + \theta_{s,t} V^{s,t}) V^{s,t}_k
			\]
			for some random $\theta_{s,t} \in (0,1)$. Using this Taylor expansion in the expression for $R_2$, we get the following three-term summand $S_{s,t}$ for each pair $(s, t) \in \I \times \D_j(s)$:
			\begin{align*}
			S_{s,t} &= \EEbracket{X_{s} X_{t} \partial_{\abs{s}\abs{t}}f(W^{s,t})} + \sum_{k=1}^d \EEbracket{X_{s} X_{t} \partial_{\abs{s}\abs{t}k}f(W^{s,t} + \theta_{s,t} V^{s,t}) V^{s,t}_k}\\
			&-\EEbracket{X_{s}X_{t}} \EEbracket{\partial_{\abs{s}\abs{t}}f(W^{s,t})}.
			\end{align*}
			
			Recalling that $W^{s,t}$ is independent of the pair $(X_s, X_t)$
			the first and the last terms cancel each other and only the sum over $k$ is left:
			\begin{align*}
			R_2 &= \sum_{s \in \I} \sum_{t \in \D(s)} S_{s,t} = \sum_{s \in \I} \sum_{t \in \D(s)} \sum_{k=1}^d \EEbracket{X_{s} X_{t} \partial_{\abs{s}\abs{t}k}f(W^{s,t} + \theta_{s,t} V^{s,t}) V^{s,t}_k}.
			\end{align*}
			
			Recalling that $\norm{\partial_{ijk}f}_{\infty} \leq \abs{f}_3$ and that $V_k^{s,t} = \sum_{v \in \D_k(t;s)}  X_v$ we have:
			\begin{align*}
			|R_2| 
			&\leq \sum_{s \in \I} \sum_{t \in \D(s)} \sum_{k=1}^d \sum_{v \in \D_k(t;s)} \EE \abs{X_{s} X_{t} X_{v} \partial_{\abs{s}\abs{t}k}f(W^{s,t} + \theta_{s,t} V^{s,t})} \\
			& \leq \abs{f}_3 \sum_{s \in \I} \sum_{t \in \D(s)} \sum_{u \in \D(t;s)}  \EE \abs{X_{s} X_{u} X_{r}},
			\end{align*}
			as required.
			
		\end{proof}
		
		\begin{claim}\label{lemma:R_3_bound}
			\begin{align*}
			|R_3| \leq \left( \sum_{s \in \I} \sum_{t \in \D(s)} \left\{ \sum_{u \in \D(s)} \EE \abs{X_{s} X_{t}} \EE \abs{X_{u}} +  \sum_{u \in \D(t;s)} \EE \abs{X_{s} X_{t}} \EE \abs{X_{u}} \right\}\right)  \abs{f}_3 .
			\end{align*}
		\end{claim}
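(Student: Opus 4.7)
The plan is to bound $R_3$ by a first-order Taylor expansion of $\partial_{|s||t|}f(W)$ around $W^{s,t}$, in the same spirit as the arguments used for $R_1$ and $R_2$. Since $W = W^{s,t} + U^s + V^{s,t}$ by \eqref{rem:UV}, I would apply the Lagrange form of the remainder to write
\[
\partial_{|s||t|}f(W) = \partial_{|s||t|}f(W^{s,t}) + \sum_{k=1}^d \partial_{|s||t|k}f\bigl(W^{s,t} + \theta_{s,t}(U^s + V^{s,t})\bigr)(U^s_k + V^{s,t}_k)
\]
for some random $\theta_{s,t} \in (0,1)$. Taking expectations and subtracting $\EE\partial_{|s||t|}f(W^{s,t})$ cancels the zeroth-order term, so substituting into the definition of $R_3$ gives
\[
R_3 = -\sum_{s \in \I}\sum_{t \in \D(s)} \EEbracket{X_s X_t} \sum_{k=1}^d \EEbracket{\partial_{|s||t|k}f\bigl(W^{s,t} + \theta_{s,t}(U^s + V^{s,t})\bigr)(U^s_k + V^{s,t}_k)}.
\]

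Note that, in contrast with the proofs for $R_1$ and $R_2$, no independence property needs to be invoked here to eliminate a first-order remainder: the factor $\EEbracket{X_s X_t}$ has already been pulled outside the expected difference in the definition of $R_3$, so it suffices to bound it in absolute value by $\EE\abs{X_s X_t}$. I would then invoke the sup-norm bound $\linfnorm{\partial_{|s||t|k}f} \leq \abs{f}_3$ together with the triangle inequality inside the expectation to obtain
\[
\abs{R_3} \leq \abs{f}_3 \sum_{s \in \I}\sum_{t \in \D(s)} \EE\abs{X_s X_t} \sum_{k=1}^d \bigl(\EE\abs{U^s_k} + \EE\abs{V^{s,t}_k}\bigr).
\]
Finally, I would expand $U^s_k = \sum_{u \in \D_k(s)} X_u$ and $V^{s,t}_k = \sum_{v \in \D_k(t;s)} X_v$, apply the triangle inequality once more to pass the absolute values inside each sum, and combine the outer sum over $k \in [d]$ with the inner sums over $\D_k(\cdot)$ into single sums over $\D(s) = \bigcup_{j} \D_j(s)$ and $\D(t;s) = \bigcup_{j} \D_j(t;s)$; this yields exactly the claimed bound.

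The main obstacle is modest: the argument is structurally parallel to, and in fact simpler than, the proofs for $R_1$ and $R_2$, since no independence property has to be invoked to cancel a middle Taylor term. The only care required is correct bookkeeping of the dependency-neighbourhood decomposition, in particular keeping $\D_k(s)$ and $\D_k(t;s)$ separate and recognising that these sets need not be disjoint across different pairs $(s,t)$.
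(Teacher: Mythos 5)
Your proposal is correct and follows essentially the same route as the paper: a first-order Taylor expansion with Lagrange remainder linking $\partial_{|s||t|}f(W)$ and $\partial_{|s||t|}f(W^{s,t})$ via the increment $U^s + V^{s,t}$, followed by the sup-norm bound on third derivatives and expansion of $U^s_k$ and $V^{s,t}_k$ into sums over $\D_k(s)$ and $\D_k(t;s)$. The only cosmetic difference is the direction of the expansion (you expand at $W$ around $W^{s,t}$, the paper expands at $W^{s,t}$ around $W$) and that the paper keeps $X_u$ inside the expectation with the derivative before bounding, which yields the identical final estimate.
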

		\begin{proof}
			
			Fix $(s, t) \in \I \times \D_j(s)$. Recall that by \eqref{rem:UV}, $W^{s,t} = W - U^s - V^{s,t}$. Using the Lagrange form of the remainder term in Taylor’s theorem, we obtain
			\[
			\partial_{\abs{s}\abs{t}}f(W^{s,t}) = \partial_{\abs{s}\abs{t}}f(W) - \sum_{k=1}^d \partial_{\abs{s}\abs{t}k}f(W - \rho_{s,t}(U^s + V^{s,t}))(U^s_k + V^{s,t}_k)
			\]
			for some random $\rho_{s,t} \in (0,1)$. Recalling that $U_k^s = \sum_{t \in \D_k(s)} X_t$ and $V_k^{s,t} =\sum_{u \in \D_j(t;s)}  X_u$,
			\begin{align*}
			R_3 
			= &-\sum_{s \in \I} \sum_{t \in \D(s)} \sum_{k=1}^d \EEbracket{X_{s} X_{t}} \EEbracket{\partial_{\abs{s}\abs{t}k}f(W - \rho_{s,t}(U^s + V^{s,t}))(U^s_k + V^{s,t}_k) }\\
			=& - \sum_{s \in \I} \sum_{t \in \D(s)} \sum_{u \in \D(s)} \EEbracket{X_{s} X_{t}} \EEbracket{X_{u} \partial_{|s||t|k}f(W - \rho_{s,t}(U^s + V^{s,t}))} \\
			&-\sum_{s \in \I} \sum_{t \in \D(s)} \sum_{u \in \D(t;s)} \EEbracket{X_{s} X_{t}} \EEbracket{X_{u} \partial_{|s||t|k}f(W - \rho_{s,t}(U^s + V^{s,t}))}.
			\end{align*}

			Recalling that $\norm{\partial_{ijk}f}_{\infty} \leq \abs{f}_3$ we bound:
			\begin{align*}
			|R_3| 
			\leq & \abs{f}_3 \sum_{s \in \I} \sum_{t \in \D(s)} \sum_{u \in \D(s)} \EE \abs{X_{s} X_{t}} \EE \abs{X_{u}} + \abs{f}_3 \sum_{s \in \I} \sum_{t \in \D(s)} \sum_{u \in \D(t;s)} \EE \abs{X_{s} X_{t}} \EE \abs{X_{u}},
			\end{align*}
			as required.
		\end{proof}
		
		Take any $h \in \testzero$. Let $f: \RR^d \to \RR$ be the associated solution from Lemma \ref{lemma:mvn_stein_solution}. Combining Claims \ref{lemma:mvn_stein_solution} - \ref{lemma:R_3_bound} and using Lemma \ref{lemma:mvn_stein_solution} we have:
		\begin{align*}
		&\abs{\EE h(W) - \EE h(\Sigma^{\frac{1}{2}}Z)} \\
		\leq&  \abs{\EEbracket{\nabla^{T} \Sigma \nabla f(W)-W^{T} \nabla f(W)}} \leq |R_1| + |R_2| + |R_3| \\
		\leq  & \abs{f}_3 \sum_{s \in \I} \sum_{t \in \D(s)} \sum_{u \in \D(s)}  \left(\frac{1}{2} \EE \abs{X_{s} X_{t} X_{u}} + \EE \abs{X_{s} X_{t}} \EE \abs{X_{u}} \right) \\
		&+\abs{f}_3 \sum_{s \in \I} \sum_{t \in \D(s)} \sum_{u \in \D(t;s)} \left( \EE \abs{X_{s} X_{t} X_{u}} + \EE \abs{X_{s} X_{t}} \EE \abs{X_{u}} \right) \\
		\leq & \frac{1}{3} \abs{h}_3 B_{\ref{theorem:mvn_dissociated_decomp_approx}}.
		\end{align*}
		
	\end{proof}

	\medskip 
	In most of our applications, the variables $X_s$ are centered and rescaled Bernoulli random variables. Hence, the following lemma is useful.
	
	\begin{lemma}\label{lemma:unified_moments}
		Let $\xi_1, \xi_2, \xi_3$ be Bernoulli random variables with expected values $\mu_1, \mu_2, \mu_3$ respectively. Let $c_1, c_2, c_3 > 0$ be any constants. Consider variables $X_i \coloneqq c_i (\xi_i - \mu_i)$ for $i = 1,2,3$. Then we have
		\begin{align*}
		&\EE \abs{X_1 X_2 X_3} \leq c_1 c_2 c_3 \left\{ \mu_1\mu_2(1-\mu_1)(1-\mu_2) \right\}^{\frac{1}{2}}; \\
		&\EE \abs{X_{1} X_{2}} \EE \abs{X_{3}} \leq c_1 c_2 c_3 \left\{ \mu_1\mu_2(1-\mu_1)(1-\mu_2) \right\}^{\frac{1}{2}}.
		\end{align*}
	\end{lemma}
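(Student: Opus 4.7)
The plan is to exploit two elementary observations. First, since each $\xi_i$ takes values in $\{0,1\}$ and $\mu_i \in [0,1]$, we have $|\xi_i - \mu_i| \leq 1$ pointwise, and therefore $|X_i| \leq c_i$ almost surely. Applied to the index $i=3$ this yields the crude but adequate bounds $|X_3| \leq c_3$ and $\EE|X_3| \leq c_3$, which let us peel off the third factor from both left-hand sides of the lemma. After this peeling, both inequalities reduce to the single claim
\[
\EE|X_1 X_2| \leq c_1 c_2 \bigl\{ \mu_1 \mu_2 (1-\mu_1)(1-\mu_2) \bigr\}^{1/2}.
\]

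Second, I would establish this remaining inequality by Cauchy--Schwarz: $\EE|X_1 X_2| \leq (\EE X_1^2 \cdot \EE X_2^2)^{1/2}$. The second moments here are explicit, since for a centered rescaled Bernoulli we have $\EE X_i^2 = c_i^2\, \Var(\xi_i) = c_i^2\, \mu_i(1-\mu_i)$. Substituting gives exactly the right-hand side of the displayed inequality, and combining with the crude bound $|X_3| \leq c_3$ (respectively $\EE|X_3| \leq c_3$) yields both assertions of the lemma.

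No step poses a real obstacle; the computation is a short two-line argument once the pointwise bound $|\xi_i - \mu_i| \leq 1$ and the Bernoulli variance formula are combined with Cauchy--Schwarz. The only thing worth remarking on is that the statement deliberately does not involve $\mu_3$, which is why the trivial bound $|X_3| \leq c_3$ (rather than the sharper $|X_3| \leq c_3\max(\mu_3, 1-\mu_3)$) is exactly what is wanted.
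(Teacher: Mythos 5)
Your proof is correct and follows essentially the same route as the paper's: bound $|X_3|\leq c_3$ pointwise (since $X_3$ takes only the values $-c_3\mu_3$ and $c_3(1-\mu_3)$), then apply Cauchy--Schwarz to $\EE|X_1X_2|$ and compute the Bernoulli second moments. No gaps.
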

	
	\begin{proof}
		Note that $X_{3}$ can take two values: $-c_3 \mu_3$ or $c_3(1-\mu_3)$. As $0 \leq \mu_3 \leq 1$, we have
		\[\EE \abs{X_{1} X_{2}} \EE \abs{X_{3}} \leq c_3 \EE \abs{X_{1} X_{2}};\] \[\EE \abs{X_1 X_2 X_3} \leq c_3 \EE \abs{X_1 X_2}.\] Applying the Cauchy-Schwarz inequality and direct calculation of the second moments gives
		\[\EE \abs{X_1 X_2} \leq \left\{\EEbracket{X_1^2} \EEbracket{X_2^2}\right\}^{\frac{1}{2}} = c_1c_2 \left\{ \mu_1\mu_2(1-\mu_1)(1-\mu_2) \right\}^{\frac{1}{2}},\] which finishes the proof.
	\end{proof}

	\subsection{Non-smooth Test Functions}
	Here we follow \cite[Section 5.3]{kaur2020higher} very closely to derive a bound on the convex set distance between a vector of dissociated sums $W \in \RR^d$ with covariance matrix $\Sigma$ and a target multivariate normal distribution $\Sigma^{\frac{1}{2}}Z$, where $Z \sim \mvn{0}{\text{\rm Id}_{d \times d}}$. The smoothing technique used here is introduced in \cite{gan2017dirichlet}. However, a better  (polylogarithmic) dependence on $d$ could potentially be achieved using a recent result \cite[Proposition 2.6]{gaunt2022bounding}, at the expense of larger constants.  The recursive approach from \cite{schulte2019multivariate,kasprzak2022vector}  usually yields better dependence on $n$; however, this requires the target normal distribution to have an invertible covariance matrix. Since this property does not always hold in our applications of interest, we do not	use the recursive approach here. To state our next result, let$\mathcal{K}$ be a class of convex sets in $\RR^d$. 
	
	\begin{theorem}\label{theorem:mvn_dissociated_decomp_approx_convex_sets}
		Consider a standard $d$-dimensional Gaussian vector $Z \sim \mvn{0}{\text{\rm Id}_{d \times d}}$. For any centered vector of dissociated sums $W \in \RR^d$ with a positive semi-definite covariance matrix $\Sigma$ and finite third absolute moments we have 
		\[
		\sup _{A \in \mathcal{K}}|\PP(W \in A)-\PP(\Sigma^{\frac{1}{2}}Z \in A)| \leq 2^{\frac{7}{2}} 3^{-\frac{3}{4}}d^{\frac{3}{16}}B_{\ref{theorem:mvn_dissociated_decomp_approx}}^{\frac{1}{4}},
		\]
		where the quantity $B_{\ref{theorem:mvn_dissociated_decomp_approx}}$ as in Theorem \ref{theorem:mvn_dissociated_decomp_approx}.
	\end{theorem}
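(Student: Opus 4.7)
The plan is to convert the smooth-test-function bound of Theorem \ref{theorem:mvn_dissociated_decomp_approx} into a bound for indicator functions of convex sets via a smoothing-and-optimization scheme, following \cite[Section 5.3]{kaur2020higher} building on \cite{gan2017dirichlet}. Fix a convex set $A \in \mathcal{K}$ and a smoothing parameter $\delta > 0$. The first step is to construct a function $h_{A,\delta}: \RR^d \to \RR$ belonging to the class $\testzero$ which sandwiches the indicator of $A$ in the sense that $\ind{x \in A^{-\delta}} \leq h_{A, \delta}(x) \leq \ind{x \in A^{+\delta}}$, and whose third derivatives satisfy $\abs{h_{A,\delta}}_3 \leq C_1 \delta^{-3}$ for an absolute constant $C_1$. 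One explicit choice is the convolution of a mildly inflated indicator with a smooth mollifier supported in a Euclidean ball of radius $\delta$; the Gan--R\"ollin construction is essentially this convolution with care taken to preserve the sandwich relation and control the derivatives.

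The second step is a two-sided sandwich argument. For the upper direction,
\begin{align*}
\PP(W \in A) - \PP(\Sigma^{1/2}Z \in A)
 &\leq \EE h_{A,\delta}(W) - \PP(\Sigma^{1/2}Z \in A) \\
 &\leq \bigl[\EE h_{A,\delta}(W) - \EE h_{A,\delta}(\Sigma^{1/2}Z)\bigr] + \PP\bigl(\Sigma^{1/2}Z \in A^{+\delta} \setminus A\bigr),
\end{align*}
with an analogous lower bound obtained from a mirror smoothed indicator that sits below $\ind{\cdot \in A}$. Theorem \ref{theorem:mvn_dissociated_decomp_approx} bounds the first bracket by $(C_1/3)\, B_{\ref{theorem:mvn_dissociated_decomp_approx}}\, \delta^{-3}$. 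For the boundary-mass term I would invoke Nazarov's isoperimetric inequality for convex bodies under a standard Gaussian, which gives $\PP(\Sigma^{1/2}Z \in A^{+\delta} \setminus A^{-\delta}) \leq C_2\, d^{1/4}\, \delta$ uniformly over convex $A$. It is precisely this $d^{1/4}$ rate, rather than the weaker $\sqrt d$ available by elementary means, that produces the exponent $3/16$ in the final statement.

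Combining these two displays gives, for every $\delta > 0$,
\[
\sup_{A \in \mathcal{K}} \abs{\PP(W \in A) - \PP(\Sigma^{1/2}Z \in A)} \;\leq\; \tfrac{C_1}{3}\, B_{\ref{theorem:mvn_dissociated_decomp_approx}}\,\delta^{-3} \;+\; C_2\, d^{1/4}\,\delta.
\]
Differentiating in $\delta$ locates the optimum $\delta_{*} = (C_1 B_{\ref{theorem:mvn_dissociated_decomp_approx}}/C_2)^{1/4} d^{-1/16}$, and a direct substitution yields a total bound of the form $(4/3)\, C_1^{1/4} C_2^{3/4}\, d^{3/16}\, B_{\ref{theorem:mvn_dissociated_decomp_approx}}^{1/4}$. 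The main obstacle is the careful bookkeeping of the constants $C_1$ and $C_2$ emerging from the explicit Gan--R\"ollin mollifier and from Nazarov's inequality respectively; with these tracked, one verifies that $(4/3)\, C_1^{1/4} C_2^{3/4} = 2^{7/2}\, 3^{-3/4}$, which matches the prefactor displayed in Theorem \ref{theorem:mvn_dissociated_decomp_approx_convex_sets}. A secondary bookkeeping point is that Theorem \ref{theorem:mvn_dissociated_decomp_approx} only requires $\abs{h_{A,\delta}}_3$ and does not demand invertibility of $\Sigma$, so the scheme goes through even for degenerate limits, consistent with the hypotheses of the target theorem.
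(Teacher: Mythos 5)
Your proposal follows essentially the same route as the paper: smooth the indicator of a convex set via the Gan--R\"ollin construction to get $\abs{h}_3 = O(\delta^{-3})$, sandwich $\ind{\cdot \in A}$ between smoothed indicators, control the Gaussian mass of the boundary shell by the $d^{1/4}$ anticoncentration inequality for convex sets (which the paper imports via Bentkus's Lemma 2.1 and inequalities (1.2), (1.4) rather than citing Nazarov directly), apply Theorem \ref{theorem:mvn_dissociated_decomp_approx}, and optimise over the smoothing parameter. The only difference is presentational (you carry out the sandwich argument by hand where the paper invokes Bentkus's lemma, and you assert rather than track the final constants), so the argument is correct and matches the paper's proof.
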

	
	\begin{proof}
		Fix $A \in \mathcal{K}$, $\epsilon > 0$ and define 
		\[
		A^{\epsilon}=\left\{y \in \RR^{d}: d(y, A)<\epsilon\right\}, \quad \text { and } \quad A^{-\epsilon}=\left\{y \in \RR^{d}: B(y ; \epsilon) \subseteq A\right\}
		\]
		where $d(y, A)=\inf _{x \in A}\norm{ x-y }_2$ and $B(y ; \epsilon) = \setc{z \in \RR^d}{\norm{y-z}_2 \leq \epsilon}$. 
		
		Let $ \mathcal{H}_{\epsilon, A} \coloneqq \set{h_{\epsilon, A}: \RR^{d} \to [0,1] ; A \in \mathcal{K}}$ be a class of functions such that $h_{\epsilon, A}(x)=1$ for $x \in A$ and 0 for $x \notin A^{\epsilon}$. Then, by \cite[Lemma 2.1]{bentkus2003dependence} as well as inequalities (1.2) and (1.4) from \cite{bentkus2003dependence}, for any $\epsilon > 0$ we have:
		\[
		\sup _{A \in \mathcal{K}}|\PP(W \in A)-\PP(\Sigma^{\frac{1}{2}}
		Z \in A)| \leq 4 d^{\frac{1}{4}} \epsilon+\sup _{A \in \mathcal{K}}\left|\EE h_{\epsilon, A}(W)-\EE h_{\epsilon, A}(\Sigma^{\frac{1}{2}}Z)\right|
		\]
		
		Let $f: \RR^{d} \to \RR$ be a 
		bounded Lebesgue measurable function, and for $\delta>0$  let
		\[
		\left(S_{\delta} f\right)(x)=\frac{1}{(2 \delta)^{d}} \int_{x_{1}-\delta}^{x_{1}+\delta} \cdots \int_{x_{d}-\delta}^{x_{d}+\delta} f(z) d z_{d} \ldots d z_{1}. 
		\]
		Set $\delta=\frac{\epsilon}{16\sqrt{d}}$ and $h_{\epsilon, A}=S_{\delta}^{4} I_{A^{\epsilon/4}}$, where $I_{A^{\epsilon/4}}$ is the indicator function of the subset $A^{\epsilon/4} \subseteq \RR^d$. By \cite[Lemma 3.9]{gan2017dirichlet} we have that $h_{\epsilon, A}$ is bounded and has three continuous bounded partial derivatives and its third partials are Lipschitz. Moreover, the following bounds hold:
		\[
		\left\|h_{\epsilon, A}\right\|_{\infty} \leq 1, \quad\left|h_{\epsilon, A}\right|_{2} \leq \frac{1}{\epsilon^{2}},  \quad\left|h_{\epsilon, A}\right|_{3} \leq \frac{1}{\epsilon^{3}}.
		\]
		
		Note that $h_{\epsilon, A}=S_{\delta}^{4} I_{A^{\epsilon/4}} \in \mathcal{H}_{\epsilon, A}$ and hence \cite[Lemma 2.1]{bentkus2003dependence} applies. Using this with Theorem \ref{theorem:mvn_dissociated_decomp_approx} we get:
		\begin{align*}
		\sup _{A \in \mathcal{K}}&|\PP(W \in A)-\PP(\Sigma^{\frac{1}{2}}Z \in A)| \\
		\leq & 4 d^{\frac{1}{4}} \epsilon+\sup _{A \in \mathcal{K}}\left|\EE h_{\epsilon, A}(W)-\EE h_{\epsilon, A}(\Sigma^{\frac{1}{2}}Z)\right|\\
		\leq & 4 d^{\frac{1}{4}} \epsilon + \frac{1}{3\epsilon^3} B_{\ref{theorem:mvn_dissociated_decomp_approx}}.
		\end{align*}
		
		Since this bound works for every $\epsilon > 0$, we minimise it by using $\epsilon = \left(\frac{3B_{\ref{theorem:mvn_dissociated_decomp_approx}}}{4d^{\frac{1}{4}}}\right)^{\frac{1}{4}}$.
		
	\end{proof}
	
	The next result provides a simplification of Theorems \ref{theorem:mvn_dissociated_decomp_approx} and \ref{theorem:mvn_dissociated_decomp_approx_convex_sets} under the assumption that one uses bounds that are uniform in $s, t, u \in \I$. Its proof follows immediately from writing the sum over $ \sum_{s \in \I} \sum_{t,u \in \D(s)} $ as the sum over $\sum_{i \in [d]} \sum_{j \in [d]} \sum_{k \in [d]} \sum_{s \in \I_i} \sum_{t \in \D_j(s)} \sum_{u \in \D_k(s)}$.
	
	\begin{corollary}\label{corollary:mvn_dissociated_decomp_approx}
		We have the following two bounds:
		\begin{enumerate}
			\item Under the assumptions of Theorem \ref{theorem:mvn_dissociated_decomp_approx},
			\[
			\abs{\EE h(W) - \EE h(\Sigma^{\frac{1}{2}}Z)} \leq B_{\ref{corollary:mvn_dissociated_decomp_approx}}  \abs{h}_3.
			\]
			\item Assuming the hypotheses of Theorem \ref{theorem:mvn_dissociated_decomp_approx_convex_sets},
			\[
			\sup _{A \in \mathcal{K}}|\PP(W \in A)-\PP(\Sigma^{\frac{1}{2}}Z \in A)| \leq 2^{\frac{7}{2}} 3^{-\frac{3}{4}}d^{\frac{3}{16}}B_{\ref{corollary:mvn_dissociated_decomp_approx}}^{\frac{1}{4}}.
			\]
		\end{enumerate}
		Here $B_{\ref{corollary:mvn_dissociated_decomp_approx}}$ is a sum over $(i,j,k) \in [d]^3$ of the form
		\[
		B_{\ref{corollary:mvn_dissociated_decomp_approx}} \coloneqq \frac{1}{3} \sum_{(i,j,k)} \abs{\I_i}  \alpha_{ij}  \left(\frac{3\alpha_{ik}}{2} + 2\alpha_{jk}\right)  \beta_{ijk};
		\]
		and $\alpha_{ij}$ is the largest  value attained by $\abs{\D_j(s)}$ over $s \in \I_i$, and 
		\[
		\beta_{ijk} = \max_{s,t,u}\Big(\EE\abs{X_sX_tX_u},\EE\abs{X_sX_t}\EE\abs{X_u}\Big)
		\] as $(s,t,u)$ range over $\I_i \times \I_j \times \I_k$.
	\end{corollary}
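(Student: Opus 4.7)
The plan is to show that the bound $B_{\ref{theorem:mvn_dissociated_decomp_approx}} = B_{\ref{theorem:mvn_dissociated_decomp_approx}.1}+B_{\ref{theorem:mvn_dissociated_decomp_approx}.2}$ appearing in Theorem \ref{theorem:mvn_dissociated_decomp_approx} is dominated by $B_{\ref{corollary:mvn_dissociated_decomp_approx}}$; the two assertions of the corollary then follow by direct substitution into Theorems \ref{theorem:mvn_dissociated_decomp_approx} and \ref{theorem:mvn_dissociated_decomp_approx_convex_sets} respectively, with no further analytic work required. Thus only part (1) requires argument, and the proof is purely a bookkeeping exercise on the index sets.

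First I would decompose every outer and inner sum using the two disjoint union identities $\I = \bigsqcup_{i \in [d]} \I_i$ and, for each $s \in \I$, $\D(s) = \bigsqcup_{j \in [d]} \D_j(s)$. This rewrites $\sum_{s \in \I}\sum_{t,u \in \D(s)}$ as $\sum_{(i,j,k) \in [d]^3}\sum_{s \in \I_i}\sum_{t \in \D_j(s)}\sum_{u \in \D_k(s)}$, and similarly rewrites $\sum_{s \in \I}\sum_{t \in \D(s)}\sum_{v \in \D(t;s)}$ as $\sum_{(i,j,k) \in [d]^3}\sum_{s \in \I_i}\sum_{t \in \D_j(s)}\sum_{v \in \D_k(t;s)}$. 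The key benefit of this reindexing is that in each inner triple sum the summation variables $s,t,u$ (resp. $s,t,v$) lie respectively in $\I_i,\I_j,\I_k$, so the suprema $\beta_{ijk}$ apply uniformly to every summand.

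Second, I would apply the uniform cardinality bounds. For the $B_{\ref{theorem:mvn_dissociated_decomp_approx}.1}$ term the inner sum has at most $|\I_i|\cdot \alpha_{ij}\cdot \alpha_{ik}$ terms since $|\D_j(s)|\leq \alpha_{ij}$ and $|\D_k(s)|\leq \alpha_{ik}$ for every $s \in \I_i$, and each summand is bounded by $\tfrac12\beta_{ijk}+\beta_{ijk}=\tfrac32\beta_{ijk}$, giving a contribution of at most $\tfrac{1}{3}\,|\I_i|\,\alpha_{ij}\,\tfrac{3\alpha_{ik}}{2}\,\beta_{ijk}$ for each triple $(i,j,k)$. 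For the $B_{\ref{theorem:mvn_dissociated_decomp_approx}.2}$ term one needs the additional observation $\D_k(t;s)\subseteq \D_k(t)$, so for $t \in \D_j(s)\subseteq \I_j$ the inclusion yields $|\D_k(t;s)|\leq \alpha_{jk}$; each of the at most $|\I_i|\cdot \alpha_{ij}\cdot \alpha_{jk}$ summands is bounded by $2\beta_{ijk}$, contributing $\tfrac{1}{3}\,|\I_i|\,\alpha_{ij}\,2\alpha_{jk}\,\beta_{ijk}$.

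Adding the two per-triple estimates produces exactly $\tfrac{1}{3}\,|\I_i|\,\alpha_{ij}\bigl(\tfrac{3\alpha_{ik}}{2}+2\alpha_{jk}\bigr)\beta_{ijk}$, and summing over $(i,j,k)\in[d]^3$ yields $B_{\ref{theorem:mvn_dissociated_decomp_approx}}\leq B_{\ref{corollary:mvn_dissociated_decomp_approx}}$. The only place to be careful is the asymmetry between the two terms: in $B_{\ref{theorem:mvn_dissociated_decomp_approx}.1}$ both $t$ and $u$ are anchored to $s$ (hence the bounds $\alpha_{ij}$ and $\alpha_{ik}$), whereas in $B_{\ref{theorem:mvn_dissociated_decomp_approx}.2}$ the variable $v$ is anchored to $t \in \I_j$ rather than to $s \in \I_i$, producing $\alpha_{jk}$ in place of $\alpha_{ik}$. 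This index-tracking is the only potential source of error, but no obstacle beyond it arises.
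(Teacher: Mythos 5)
Your proposal is correct and follows exactly the route the paper takes: the paper's proof is the one-line observation that one reindexes $\sum_{s\in\I}\sum_{t,u\in\D(s)}$ (and the analogous triple sum) over $(i,j,k)\in[d]^3$ and then applies the uniform bounds $\alpha_{ij}$ and $\beta_{ijk}$ termwise, which is precisely your bookkeeping argument. Your careful tracking of the asymmetry producing $\alpha_{jk}$ rather than $\alpha_{ik}$ in the second term (via $\D_k(t;s)\subseteq\D_k(t)$ with $t\in\I_j$) is the one detail the paper leaves implicit, and you handle it correctly.
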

	
	\section{Simplicial Complex Preliminaries}\label{sect:preliminaries}
	
	\subsection{First definitions}
	
	Firstly, we recall the notion of a simplicial complex \cite[Ch 3.1]{spanier}; these provide  higher-dimensional generalisations of a graph and constitute data structures of interest across algebraic topology in general as well as applied and computational topology in particular. 
	
	A simplicial complex $\cplx$ on a vertex set $V$ is a set of nonempty subsets of $V$ (i.e. $\varnothing \notin \cplx \subseteq \pow{V}$) such that the following properties are satisfied:
	\begin{enumerate}
		\item for each $v \in V$ the singleton $\set{v}$ lies in $\cplx$, and
		\item if $t \in \cplx$ and $s \subset t$ then $s \in \cplx$.
	\end{enumerate}
	
	The \textbf{dimension} of a simplicial complex $\cplx$ is $\max_{s \in \cplx}|s| - 1$. Elements of a simplicial complex are called \textbf{simplices}. If $s$ is a simplex, then its dimension is $|s| - 1$. A simplex of dimension $k$ can be called a $k$-simplex. Note that the notion of one-dimensional simplicial complex is equivalent to the notion of a graph, with the vertex set $V$ and edges as subsets. 
	
	Given a graph $G = (V, E)$ the clique complex $\mathcal{X}$ of $G$ is a simplicial complex on $V$ such that \[ t \in \mathcal{X}  \iff \forall u,v \in t,  \; \set{u,v} \in E
	.\]
	Recall that $\gnp$ is a random graph on $n$ vertices where each pair of vertices is connected with probability $p$, independently of any other pair. The $\xnp$ random simplicial complex is the clique complex of the $\gnp$ random graph, which is a random model studied in stochastic topology \cite{kahle2009topology, kahle2014sharp}. Note that $t \in {\mathcal{X}} $ if and only if the vertices of $t$ span a clique in $G$. Thus, elements in $\xnp$ are cliques in $\gnp$.
	
	\subsection{Links}
	
	The link of a simplex $t$ in a simplicial complex $\cplx$ is the subcomplex \[
	\lk(t) = \setc{s \in \cplx}{s \cup t \in \cplx \text{ and } t \cap s = \varnothing}.
	\]
	
	\begin{figure}
		\centering
		\includegraphics[width=0.7\textwidth]{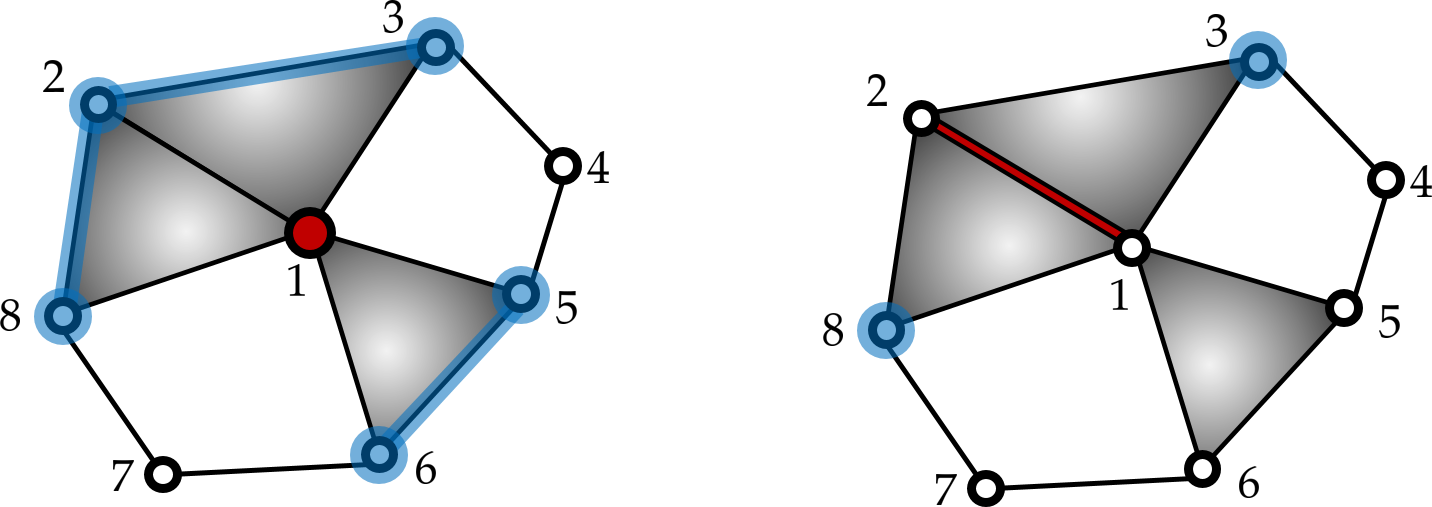}
		\caption{Left: the link (highlighted in blue) of the vertex 1 (highlighted in red). Right: the link (highlighted in blue) of the edge $\set{1,2}$ (highlighted in red). The two-dimensional simplices are shaded in grey.}
		\label{fig:links}
	\end{figure}
	
	\begin{example}
		If we look at a graph as a one dimensional simplicial complex, then the vertices are sets of the form $\set{i}$ and edges are sets of the form $\set{i,j}$. For a vertex $t = \set{v}$, the edges of the form $s = \set{v,u}$ will not be in the link of $t$ because $t \cap s = \varnothing$ is not satisf\textbf{}ied. If we pick $s = \set{i,j}$ and $v \notin s$, then $s \cup t \in \cplx$ is not satisfied. So there will be no edges in the link. However, if $s = \set{u}$ and $u$ is a neighbour of $v$, then $s \cup t \in \cplx$ and $s \cap t = \varnothing$. Hence the link of a vertex will be precisely the other vertices that the vertex is connected to; the notion of the link generalises the idea of a neighbourhood in a graph.
	\end{example}
	
	\begin{example} 
		Now consider the simplicial complex depicted in Figure \ref{fig:links}: it has 8 vertices, 12 edges and 3 two-dimensional simplices that are shaded in grey. On the left hand side of the figure we see highlighted in blue the link of the vertex 1, which is highlighted in red. So $\lk(\set{1}) = \set{\set{2}, \set{3}, \set{5}, \set{6}, \set{8}, \set{2,3}, \set{2,8}, \set{5,6}}$. On the right hand side of the figure we see highlighted in blue the link of the edge $\set{1,2}$, which is highlighted in red. That is, $\lk(\set{1,2}) = \set{\set{3}, \set{8}}$.
	\end{example}
	
	\subsection{Discrete Morse theory}

	A \textbf{partial matching} on a simplicial complex $\cplx$ is a collection
	\[
	\Sigma = \setc{(s, t)}{s \subseteq t \in \cplx \text{ and } |t| - |s| = 1}
	\] such that every simplex appears in at most one pair of $\Sigma$. A \textbf{$\mathbf{\Sigma}$-path} (of length $k \geq 1)$ is a sequence of distinct simplices of $\cplx$ of the following form:
	\[(s_1 \subseteq t_1 \supseteq s_2 \subseteq t_2 \supseteq \ldots \supseteq s_k \subseteq t_k)\]
	such that $(s_i, t_i) \in \Sigma$ and $|t_i| - |s_{i+1}| = 1$ for all $i \in [k]$. A $\Sigma$-path is called a \textbf{gradient path} if $k=1$ or $s_1$ is not a subset of $t_k$. A partial matching $\Sigma$ on $\cplx$ is called \textbf{acyclic} iff every $\Sigma$-path is a gradient path. Given a partial matching $\Sigma$ on $\cplx$, we say that a simplex $t \in \cplx$ is \textbf{critical} iff $t$ does not appear in any pair of $\Sigma$.

	For a one-dimensional simplicial complex, viewed as a graph, a partial matching $\Sigma$ is comprised of elements $( v; \{u,v\})$ with $v$ a vertex and $\{u, v\}$ an edge. A $\Sigma-$path is then a sequence of distinct vertices and edges 
	\[
	v_1, \set{v_1,v_2}, v_2, \set{v_2,v_3}, \ldots, v_k, \set{v_k,v_{k+1}}
	\]
	where each consecutive pair of the form $(v_i,\set{v_i,v_{i+1}})$ is constrained to lie in $\Sigma$.
	
	\medskip
	We refer the interested reader to \cite{forman2002user} for an introduction to discrete Morse theory and to \cite{mischaikow2013morse} for seeing how it is used to reduce computations in the persistent homology algorithm. In this work we aim to understand how much improvement one would likely get on a random input when using a specific type of acyclic partial matching, defined below. 
	
	\begin{definition}\label{def:lexi_matching}
		Let $\cplx$ be  a simplicial complex and assume that the vertices are ordered by $[n] = \{1,\ldots,n\}$.
		For each simplex $s \in \cplx$ define
		\[
		I_{\cplx}(s) \coloneqq \{j \in [n] \mid j < \min(s) \text{ and } s \cup \{j\} \in \cplx\}. 
		\]
		Now consider the pairings
		\[
		s \leftrightarrow s \cup \{i\},
		\]
		where $i = \min I_{\cplx}(s)$ is the smallest element in the set $ I_{\cplx}(s)$, defined whenever $I_{\cplx}(s) \neq \varnothing$. We call this the {\bf lexicographical matching}.
	\end{definition}
	
	Due to the  $\min I_{\cplx}(s)$ construction in the lexicographical matching, the indices are decreasing along any path and hence it will be a gradient path, showing that the lexicographical matching is indeed an acyclic partial matching on $\cplx$.

	\begin{figure}
		\centering
		\includegraphics[width=0.25\textwidth]{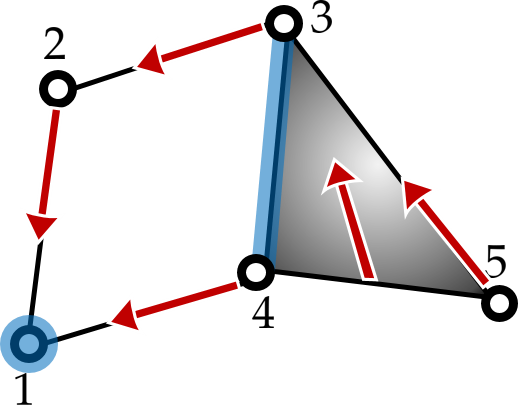}
		\caption{Lexicographical matching given by the red arrows. Critical simplices are highlighted in blue.}
		\label{fig:lexmorse}
	\end{figure}
	
	\begin{example}
		Consider the simplicial complex $\cplx$ depicted in Figure \ref{fig:lexmorse}. 
		The complex has 5 vertices, 6 edges and one two-dimensional simplex that is shaded in grey. The red arrows show the lexicographical matching on this simplicial complex: there is an arrow from a simplex $s$ to $t$ iff the pair $(s, t)$ is part of the matching. More explicitly, the lexicographical matching on $\cplx$ is \[\Sigma = \set{(\set{2}, \set{1,2}), (\set{3}, \set{2,3}), (\set{4}, \set{1,4}), (\set{5}, \set{3,5}), (\set{4, 5}, \set{3,4, 5})}.\] 
		Note that $\set{3,4}$ cannot be matched because the set $I_{\cplx}(\set{3,4})$ is empty. Also, in any lexicographical matching $\set{1}$ is always critical as there are no vertices with a smaller label and hence the set $I_{\cplx}(\set{1})$ is empty. So under this matching there are two critical simplices: $\set{1}$ and $\set{3,4}$, highlighted in blue in the figure. Hence, if we were computing the homology of this complex, considering only two simplices would be sufficient instead of all 12 which are in $\cplx$ - a significant improvement.
	\end{example}
	
	\section{Critical Simplex Counts for Lexicographical Morse Matchings}\label{section:crit_lexi}
	Now we attend to our motivating problem, critical simplex counts. Consider the random simplicial complex $\xnp$. In this section we study the joint distribution of critical simplices in different dimensions with respect to the lexicographical matching on $\xnp$. We start with the following lemma, which is an immediate consequence of Definition \ref{def:lexi_matching}, allowing us to write down the variables of interest in terms of the edge indicators.
	
	\begin{lemma}
		\label{proposition:critical_lexi}
		Let $\cplx$ be a simplicial complex. Consider the lexicographical matching on $\cplx$. Then $t \in \cplx$ matches with one of its cofaces (i.e. $s \in \cplx$ with $|s| - |t| = 1$ and $t \subset s$) iff it is not the case that for all $j < \min(t)$ we have $t \cup \set{j} \notin \cplx$. Also, $t \in \cplx$ matches with one of its faces (i.e. $s \in \cplx$ with $|t| - |s| = 1$ and $s \subset t$) iff for all $j < \min(t)$ we have $t \setminus \set{\min(t)} \cup \set{j} \notin \cplx$. 
	\end{lemma}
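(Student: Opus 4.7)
The statement is essentially an unpacking of Definition \ref{def:lexi_matching}, so the proof is a careful bookkeeping exercise rather than anything requiring a new idea. The plan is to treat each of the two biconditionals separately, translating the phrase ``$t$ matches with a face/coface'' directly into the condition on $I_\cplx$ appearing in the pairing rule $s \leftrightarrow s \cup \{i\}$.

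For the first assertion, I would note that by construction $t$ plays the role of ``$s$'' in the pairing rule, and is thereby matched with a coface, precisely when $I_\cplx(t) \neq \varnothing$. Since $I_\cplx(t) = \{j < \min(t) : t \cup \{j\} \in \cplx\}$, this is exactly the negation of the assertion ``for all $j < \min(t)$, $t \cup \{j\} \notin \cplx$'', matching the right-hand side of the lemma.

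For the second assertion, suppose $t$ is matched with a codimension-one face $s \subsetneq t$, so $(s, t) = (s, s \cup \{i\})$ with $i = \min I_\cplx(s) < \min(s)$. Because $i < \min(s)$, the added element $i$ must coincide with $\min(t)$, forcing $s = t \setminus \{\min(t)\}$. The condition $i = \min I_\cplx(s)$ then splits into (a) $\min(t) \in I_\cplx(s)$, which is automatic since $s \cup \{\min(t)\} = t \in \cplx$, and (b) no $j < \min(t)$ belongs to $I_\cplx(s)$, i.e.\ for every $j < \min(t)$, $(t \setminus \{\min(t)\}) \cup \{j\} \notin \cplx$. The converse direction is immediate: whenever this last condition holds, applying the pairing rule to $s = t \setminus \{\min(t)\}$ produces exactly the pair $(s, t)$.

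The only mildly delicate point, which I would address with a brief remark, is to confirm that the two cases are mutually exclusive, i.e.\ that $t$ cannot simultaneously match with a face and with a coface. This follows from the downward-closure of $\cplx$: if $I_\cplx(t) \neq \varnothing$, pick $j < \min(t)$ with $t \cup \{j\} \in \cplx$; then $(t \setminus \{\min(t)\}) \cup \{j\} \in \cplx$ as well, so $j \in I_\cplx(t \setminus \{\min(t)\})$ with $j < \min(t)$, contradicting condition (b) above. I do not expect any genuine obstacle — the argument is essentially tautological once one spots that the vertex $i$ added in $s \leftrightarrow s \cup \{i\}$ is forced to equal $\min(t)$.
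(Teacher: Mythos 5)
Your argument is correct and is exactly the verification the paper has in mind: the paper offers no proof at all, stating only that the lemma is an immediate consequence of Definition \ref{def:lexi_matching}, and your unpacking --- in particular the observation that the vertex added in $s \leftrightarrow s \cup \{i\}$ is forced to equal $\min(t)$, together with the mutual-exclusivity check, which is also what guarantees the lexicographical pairing is a genuine partial matching --- supplies precisely the details left implicit. The only caveat is the degenerate case of a vertex $t$ with $\min(t)=1$, where the second biconditional has a vacuously true right-hand side but a false left-hand side (the empty set is not a simplex, so $t$ has no face to match with); this is an artefact of the lemma's statement rather than of your proof, and is immaterial since the paper only applies the lemma to simplices of dimension at least one.
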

	
  For any pair of integers $1 \leq i < j \leq n$ let $Y_{i,j} \coloneqq \ind{\set{i,j} \in \xnp}$ be the edge indicator. Fix $s \in C_k$. Define the variables $X_s^{+} = \ind{s \text{ matches with its coface given it is a simplex}}$ and $X_s^{-} = \ind{s \text{ matches with its face given it is a simplex}}$. The events that the two variables indicate are disjoint. By Lemma \ref{proposition:critical_lexi} we can see that $X_s^{+} = 1 - \prod_{i=1}^{\min(s) - 1}\left(  1 - \prod_{j \in s} Y_{i,j} \right)$ and $X_s^{-} = \prod_{i=1}^{\min(s) - 1}\left(  1 - \prod_{j \in s_{-}} Y_{i,j}\right)$, where $s_{-} \coloneqq s \setminus \set{\min(s)}$. Hence, 
	\begin{align*}
	&\ind{s \text{ is a critical simplex}} = \ind{s \in \xnp}(1 - (X_s^{+} + X_s^{-})) \\
	&= \prod_{i \neq j \in s} Y_{i,j} \left[ \prod_{i=1}^{\min(s) - 1}\left(1 - \prod_{j \in s}Y_{i,j}\right) - \prod_{i=1}^{\min(s) - 1}\left(  1 - \prod_{j \in s_{-}} Y_{i,j}\right)  \right].
	\end{align*}
	
	Thus, the random variable of interest, counting the number of $(k-1)$-simplices that are critical under the lexicographical matching, is
	\begin{equation}\label{equation:def_lexi_crit}
	T_k = \sum_{s \in C_k} \prod_{i \neq j \in s} Y_{i,j} \left[ \prod_{i=1}^{\min(s) - 1}\left(1 - \prod_{j \in s}Y_{i,j}\right) - \prod_{i=1}^{\min(s) - 1}\left(  1 - \prod_{j \in s_{-}} Y_{i,j}\right)  \right].
	\end{equation}
	Note that this random variable does not fit into the framework of generalised $U$-statistics, which we will discuss in Section \ref{section:gen_u_stat}, because the summands in $T_k$ depend not only on the variables that are indexed by the subset $s$.

	\subsection{Moments}
	
	\begin{lemma}
		\label{lemma:expect_crit_lexi}
		For any $1 \leq k \leq n - 1$ we have:
		
		\[ p^{\binom{k+1}{2}+k}\binom{n-2}{k}(1-p) \leq \EE\{ T_{k+1} \} \leq p^{\binom{k+1}{2}-k-1}\binom{n-1}{k}(1-p).\]
	\end{lemma}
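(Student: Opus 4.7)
The plan is to compute $\EE\{T_{k+1}\}$ explicitly enough to see the combinatorics, then obtain the lower bound by discarding all but one term and the upper bound by extending the sum to infinity and summing two geometric series.

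\emph{Step 1 (exact formula).} Starting from equation~\eqref{equation:def_lexi_crit}, I will fix $s \in C_{k+1}$ and set $m = \min(s)$. The indicator of ``$s$ is critical'' splits into a product of three groups of edge indicators whose index sets are pairwise disjoint: the $\binom{k+1}{2}$ internal edges of $s$, for each $i<m$ the $k+1$ edges from $i$ to $s$ used inside $\prod_{j\in s}Y_{i,j}$, and for each $i<m$ the $k$ edges from $i$ to $s_{-}$ used inside $\prod_{j\in s_{-}}Y_{i,j}$. Since all $Y_{i,j}$ are independent Bernoulli$(p)$, the expectation factorises across $i<m$ and gives
\[
\EE\,\ind{s \text{ is critical}} \;=\; p^{\binom{k+1}{2}}\bigl[(1-p^{k+1})^{m-1} - (1-p^{k})^{m-1}\bigr].
\]
Summing over $s$ by grouping according to $m = \min(s)$ (there are $\binom{n-m}{k}$ such sets), I get
\[
\EE\{T_{k+1}\} \;=\; p^{\binom{k+1}{2}}\sum_{m=2}^{n-k}\binom{n-m}{k}\bigl[(1-p^{k+1})^{m-1} - (1-p^{k})^{m-1}\bigr],
\]
where the $m=1$ term vanishes and each summand is nonnegative since $s_{-}\subset s$ forces $(1-p^{k+1})\geq (1-p^{k})$.

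\emph{Step 2 (lower bound).} All terms in the above sum are nonnegative, so I simply retain the $m=2$ contribution and drop the rest. The $m=2$ term equals $\binom{n-2}{k}p^{\binom{k+1}{2}}\bigl[(1-p^{k+1})-(1-p^{k})\bigr] = \binom{n-2}{k}p^{\binom{k+1}{2}+k}(1-p)$, which is exactly the claimed lower bound.

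\emph{Step 3 (upper bound).} I first bound $\binom{n-m}{k}\leq \binom{n-1}{k}$ uniformly for $m\geq 2$ and then extend the summation range to all $m\geq 2$, which only enlarges the sum. This reduces the task to a geometric series computation: with $l = m-1$,
\[
\sum_{l=1}^{\infty}\bigl[(1-p^{k+1})^{l} - (1-p^{k})^{l}\bigr] \;=\; \frac{1-p^{k+1}}{p^{k+1}} - \frac{1-p^{k}}{p^{k}} \;=\; \frac{1-p}{p^{k+1}}.
\]
Multiplying by the prefactor $p^{\binom{k+1}{2}}\binom{n-1}{k}$ yields $p^{\binom{k+1}{2}-k-1}\binom{n-1}{k}(1-p)$, matching the claimed upper bound.

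\emph{Anticipated difficulty.} There is no deep obstacle; the only delicate point is keeping track of the disjointness of the three groups of edge variables in Step~1 so that the expectation factorises cleanly. Once the exact formula is in hand, both bounds follow mechanically, and no moment machinery from Section~\ref{section:main_result} is required.
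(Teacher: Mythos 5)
Your proof is correct and follows essentially the same route as the paper: the same exact formula for $\EE\{T_{k+1}\}$ (the paper's summation index $l$ is your $m-1$), the same lower bound obtained by keeping only the $m=2$ term, and the same upper bound via $\binom{n-m}{k}\leq\binom{n-1}{k}$ and the two geometric series. One minor wording slip in Step 1: the edges from $i$ to $s_{-}$ form a subset of the edges from $i$ to $s$, so those two groups are not disjoint; the factorisation actually rests on linearity of expectation applied to the difference inside the bracket, together with independence across distinct $i<m$ and between the internal and cross edges --- which is what your computation in effect uses.
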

	
	\begin{proof}
		\begin{align*}
		\EE\ \{T_{k+1}\} & = \sum_{l=1}^{n-k} \sum_{\substack{s \in C_{k+1} \\ \min(s) = l}} \EEbracket{\prod_{i \neq j \in s} Y_{i,j} \left[ \prod_{i=1}^{l - 1}\left(1 - \prod_{j \in s}Y_{i,j}\right) - \prod_{i=1}^{l - 1}\left(  1 - \prod_{j \in s_{-}} Y_{i,j}\right)  \right]} \\
		&= p^{\binom{k+1}{2}} \sum_{l=1}^{n-k} \sum_{\substack{s \in C_{k+1} \\ \min(s) = l}} \left\{  (1-p^{k+1})^{l - 1} - (1-p^k)^{l-1} \right\} \\
		&=p^{\binom{k+1}{2}} \sum_{l=0}^{n-k-1} \binom{n-l-1}{k} \left\{  (1-p^{k+1})^{l} - (1-p^k)^{l} \right\} \\
		& \leq p^{\binom{k+1}{2}} \binom{n-1}{k}  \sum_{l=0}^\infty \left\{  (1-p^{k+1})^{l} - (1-p^k)^{l} \right\}\\
		& = p^{\binom{k+1}{2}-k-1}\binom{n-1}{k}(1-p).
		\end{align*}
		
		Moreover, 
		\begin{align*}\EE\ \{T_{k+1}\} &= p^{\binom{k+1}{2}} \sum_{l=0}^{n-k-1} \binom{n-l-1}{k} \left\{  (1-p^{k+1})^{l} - (1-p^k)^{l} \right\} \\
		& \geq p^{\binom{k+1}{2}} \binom{n-2}{k} \left\{ (1-p^{k+1})^1 - (1-p^k)^1 \right\}\\
		& = p^{\binom{k+1}{2}+k}\binom{n-2}{k}(1-p).
		\end{align*}
	\end{proof}
	
	In this example, bounding the variance is not immediate. The proof of the following Lemmas \ref{lemma:var_crit_lexi} and \ref{lemma:lower_bound_var_crit_lexi} are long (and not particularly insightful) calculations, which are deferred to the Appendix.
	
	\begin{lemma}\label{lemma:var_crit_lexi}
		For any integer $1 \leq k \leq n - 1$ we have: \[\Var\{ T_{k+1} \} = 2p^{2\binom{k+1}{2}}V_1 + 2p^{2\binom{k+1}{2}}V_2 + p^{2\binom{k+1}{2}}V_3 +  p^{\binom{k+1}{2}}V_4,\]
		
		where
		
		\begin{align*}
		V_1 = &\sum_{i < j}^{n-k} \sum_{m=1}^k \sum_{q=1}^{\min(k+1, j-i)} \binom{n-j}{2k+1-m-q} \binom{2k+1-m-q}{k} \binom{k}{m-1} \binom{j-i+1}{q-1} \\
		&\Big\{ \theta(i,j,q,m,1) \big[(1-2p^{k+1} + p^{2k+2-m})^{i-1} - (1-p^{k+1}-p^k + p^{2k+1-m})^{i-1}\big]\\
		&+ \theta(i,j,q,m,0) \big[(1-2p^{k} + p^{2k+1-m})^{i-1} - (1-p^{k+1}-p^k + p^{2k+2-m})^{i-1}\big] - \eta(i)\eta(j)\Big\} ;\\
		V_2 = &\sum_{i < j}^{n-k} \sum_{m=1}^k \sum_{q=1}^{\min(k+1, j-i)} \binom{n-j}{2k+1-m-q} \binom{2k+1-m-q}{k} \binom{k}{m} \binom{j-i+1}{q-1} \\
		&\Big\{ \theta(i,j,q,m,1) \big[(1-2p^{k+1} + p^{2k+2-m})^{i-1} - (1-p^{k+1}-p^k + p^{2k+2-m})^{i-1}\big]\\
		&+ \theta(i,j,q,m,0) \big[(1-2p^{k} + p^{2k-m})^{i-1}  - (1-p^{k+1}-p^k + p^{2k+2-m})^{i-1}\big] - \eta(i)\eta(j)\Big\} ;\\
		V_3 = &\sum_{i=1}^{n-k} \sum_{m=1}^k  \binom{n-i}{2k+1-m} \binom{2k+1-m}{k} \binom{k}{m-1} \\
		&\Big\{ p^{- \binom{m}{2}} \Big[(1-2p^{k+1} + p^{2k+2-m})^{i-1} +\\
		& (1-2p^{k} + p^{2k+1-m})^{i-1} - 2(1 -p^k -p^{k+1} + p^{2k+2-m})^{i-1} \Big] -  \eta(i)^{2}\Big\}; \\
		V_4 = &\sum_{i=1}^{n-k} \binom{n-i}{k} \left\{ \eta(i) - p^{\binom{k+1}{2}} \eta(i)^{2}\right\}.
		\end{align*}
		
		Here we have used the following notation:
		\begin{align*}
		\eta(a) &\coloneqq (1-p^{k+1})^{a-1} - (1-p^k)^{a-1};\\
		\theta(i,j,q,m,\delta) &\coloneqq p^{- \binom{m}{2}}(1-p^{k+\delta})^{j-i-q}(1-p^{k+\delta-m})^q.
		\end{align*}
		Also, $ \sum_{i < j}^{n-k}$ stands for $ \sum_{i=1}^{n-k-1} \sum_{j= i+1}^{n-k}$.
	\end{lemma}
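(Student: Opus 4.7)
The plan is to expand
$\Var(T_{k+1}) = \sum_{s,t \in C_{k+1}} \Cov(X_s, X_t)$, where $X_s = \ind{s \text{ is critical}}$, and to stratify the double sum according to the joint structure of $s$ and $t$. From \eqref{equation:def_lexi_crit} each summand factors as $X_s = \ind{s \in \cplx} \cdot (P_s^+ - P_s^-)$, with $P_s^+ = \prod_{i < \min(s)}(1 - \prod_{j \in s} Y_{i,j})$ and $P_s^- = \prod_{i < \min(s)}(1 - \prod_{j \in s_-} Y_{i,j})$. I would partition ordered pairs $(s,t)$ into three classes: (a) the diagonal $s = t$, yielding the term $p^{\binom{k+1}{2}} V_4$ via $\Var(X_s) = \EE X_s - (\EE X_s)^2$ together with Lemma \ref{lemma:expect_crit_lexi}; (b) the off-diagonal pairs with $\min(s) = \min(t)$, yielding $V_3$; and (c) the off-diagonal pairs with $\min(s) \neq \min(t)$, which by symmetry I would take to satisfy $\min(s) = i < j = \min(t)$ with an overall factor of $2$, splitting further into $i \in t$ (yielding $V_1$) and $i \notin t$ (yielding $V_2$).

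\textbf{Paragraph 2: computing the cross expectations.} For off-diagonal pairs I would first isolate the clique factor using independence of disjoint edges: writing $m = |s \cap t|$, the union $E(K_s) \cup E(K_t)$ contains $2\binom{k+1}{2} - \binom{m}{2}$ distinct edges, so $\EE[X_s X_t]$ equals $p^{2\binom{k+1}{2} - \binom{m}{2}}$ times $\EE\bigl[(P_s^+ - P_s^-)(P_t^+ - P_t^-)\bigr]$. I would then expand this product into the four cross expectations $\EE[P_s^{\pm} P_t^{\pm}]$. For each vertex $i' < \min(s)$ the corresponding factor in every cross term has the form $\EE[(1-A)(1-B)] = 1 - \EE A - \EE B + \EE[AB]$, where $A$ and $B$ are the "fully-connected" events of $i'$ to one of $s, s_-$ and to one of $t, t_-$; the relevant exponents on $p$ are $k$ or $k+1$ for the marginals and $2k + \delta_s + \delta_t - |s_* \cap t_*|$ for the joint, with $\delta_s, \delta_t \in \{0,1\}$ and $s_*, t_*$ the appropriate reduced sets. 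Raising each base to the power $i - 1$ produces the four base expressions $(1 - 2p^{k+1} + p^{2k+2-m})^{i-1}$, $(1 - p^{k+1} - p^k + p^{2k+1-m})^{i-1}$, $(1 - p^{k+1} - p^k + p^{2k+2-m})^{i-1}$, $(1 - 2p^k + p^{2k+1-m})^{i-1}$ or $(1 - 2p^k + p^{2k-m})^{i-1}$. When $i < j$, a further product over $i' \in [i, j-1]$ arises from the $P_t^{\pm}$ factor alone; writing $q$ for the number of vertices of $s \setminus \{i\}$ lying in this middle range gives the $(1-p^{k+\delta})^{j-i-q}(1-p^{k+\delta-m})^q$ piece inside $\theta(i,j,q,m,\delta)$.

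\textbf{Paragraph 3: enumeration, obstacle, and subtraction of the mean.} The combinatorial weights $\binom{n-j}{2k+1-m-q}\binom{2k+1-m-q}{k}$, $\binom{k}{m-1}$ or $\binom{k}{m}$, and $\binom{j-i+1}{q-1}$ arise from successively choosing the vertices of $s \cup t$ strictly above $j$, splitting them between the two simplices and their overlap, and placing the $q$ middle-range vertices of $s$. The two off-diagonal subcases split as follows: $i \in t$ forces $|s_- \cap t| = m - 1$ (giving the exponent $2k+1-m$ and the binomial $\binom{k}{m-1}$, hence $V_1$), whereas $i \notin t$ forces $|s_- \cap t| = m$ (giving the exponent $2k+2-m$ and the binomial $\binom{k}{m}$, hence $V_2$). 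The $V_3$ case is the same analysis specialised to $i = j$, in which the symmetric overlap of $s_-$ and $t_-$ forces the corrective prefactor $p^{-\binom{m}{2}}$ to account for shared edges within $(s \cap t) \setminus \{i\}$. The main obstacle is precisely this bookkeeping: one must simultaneously track which of $\min(s), \min(t)$ lies in the other simplex, where the overlap vertices sit relative to $j$, and which of the four inclusion-exclusion cross terms is being evaluated, all while consistently rewriting exponents of $p$. Finally, I would conclude by subtracting $(\EE T_{k+1})^2 = p^{2\binom{k+1}{2}} \sum_{s,t} \eta(\min(s))\eta(\min(t))$ term-by-term: the diagonal part supplies the $-p^{\binom{k+1}{2}}\eta(i)^2$ correction absorbed into $V_4$, while the off-diagonal part supplies the $-\eta(i)\eta(j)$ corrections appearing inside the braces of $V_1$, $V_2$, and $V_3$, producing exactly the claimed decomposition.
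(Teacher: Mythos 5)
Your overall strategy is the same as the paper's: write each summand as a clique indicator times $P_s^+ - P_s^-$, split the variance into the diagonal, the off-diagonal pairs with equal minima, and the off-diagonal pairs with distinct minima, expand $(P_s^+-P_s^-)(P_t^+-P_t^-)$ into four cross terms via inclusion–exclusion, and enumerate pairs $(s,t)$ by $(i,j,m,q)$. However, two steps as written are wrong. First, your criterion for splitting the distinct-minima case into the $V_1$ and $V_2$ contributions — "$i \in t$" versus "$i \notin t$" — is vacuous: since $i = \min(s) < j = \min(t)$, the vertex $i$ can never lie in $t$, so your split assigns every pair to the $V_2$ class and the $V_1$ term never arises. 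The correct dichotomy is whether $\min(t) \in s$, i.e.\ whether the \emph{larger} minimum belongs to the other simplex; this is what produces the two counts $\binom{k}{m-1}$ and $\binom{k}{m}$. Relatedly, the intersection size that drops to $m-1$ when $\min(t)\in s$ is $|s\cap t_-|$ (and $|s_-\cap t_-|$), not $|s_-\cap t|$: since $i\notin t$ always, $|s_-\cap t| = m$ in both subcases, and it is the removal of $\min(t)$ from $t$ that changes the exponent from $2k+2-m$ to $2k+1-m$. Your stated rule therefore produces the wrong exponents in the $\pi$-type expressions distinguishing $V_1$ from $V_2$.

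Second, the factorisation $\EE[X_sX_t] = p^{2\binom{k+1}{2}-\binom{m}{2}}\,\EE[(P_s^+-P_s^-)(P_t^+-P_t^-)]$ cannot be justified by "independence of disjoint edges": the factors of $P_t^{\pm}$ indexed by $i' \in s\cap[\min(s),\min(t)-1]$ involve edges from $i'$ to $t\cap s$, which are edges of the clique on $s$, so the clique indicator and the matching factors are dependent. One must instead compute $\PP(Y_s^{\pm}Y_t^{\pm}=1\mid Z_sZ_t=1)$, splitting the product over $i'$ into the range below $\min(s)$, the middle-range vertices in $s$ (whose factors are conditioned, giving $(1-p^{k+\delta-m})^q$), and the middle-range vertices not in $s$ (which are genuinely independent of the cliques). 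Your formula for $\theta$ is consistent with this conditioning, so the fix is conceptual rather than computational, but the justification you give is not valid as stated. Two smaller slips: the $p^{-\binom{m}{2}}$ correction accounts for the $\binom{m}{2}$ edges inside $s\cap t$ itself, not inside $(s\cap t)\setminus\{i\}$; and $q$ must count all of $s\cap[\min(s),\min(t)-1]$ including $\min(s)$ itself (whose factor in $P_t^{\pm}$ is also conditioned), so your convention "$q$ counts vertices of $s\setminus\{i\}$ in the middle range" is off by one relative to the exponents you write down.
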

	
	\begin{lemma}\label{lemma:lower_bound_var_crit_lexi}
		For a fixed integer $1 \leq k \leq n-1$ and $p \in (0,1)$ there is a constant $C_{p,k} > 0$ independent of $n$ and a natural number $N_{p,k}$ such that for any $n \geq N_{p,k}$:
		\[\Var(T_{k+1}) \geq C_{p,k}n^{2k}.\]
	\end{lemma}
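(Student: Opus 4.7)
The plan is to extract the leading $n^{2k}$ contribution directly from the closed-form expansion of $\Var(T_{k+1})$ provided by Lemma \ref{lemma:var_crit_lexi}, and to show that this leading term has a strictly positive coefficient depending only on $p$ and $k$. Using the decomposition
\[
\Var(T_{k+1}) = 2p^{2\binom{k+1}{2}}V_1 + 2p^{2\binom{k+1}{2}}V_2 + p^{2\binom{k+1}{2}}V_3 + p^{\binom{k+1}{2}}V_4,
\]
I would analyse each summand separately.

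First, I would verify that $V_1$, $V_2$ and $V_4$ are of strictly lower order than $n^{2k}$. By Lemma \ref{lemma:expect_crit_lexi}, $\sum_{i}\binom{n-i}{k}\eta(i) = O(n^k)$; since $0 \leq \eta(i) \leq 1$, the same estimate applies to $\sum_i \binom{n-i}{k}\eta(i)^2$, and hence $V_4 = O(n^k)$. In $V_1$ and $V_2$, the dominant $n$-power comes from $\binom{n-j}{2k+1-m-q}$, which is at most $O(n^{2k-1})$ since $m, q \geq 1$. The remaining bracketed factors are bounded by quantities of the form $(1-c)^{i-1}$ with $c \in (0,1)$ bounded away from $0$ (giving geometric decay in $i$) and $(1-p^{k+\delta})^{j-i-q}$ (geometric decay in $j-i$); consequently the double series in $(i,j)$ converges absolutely uniformly in $n$, yielding $V_1, V_2 = O(n^{2k-1})$.

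Next, I would isolate the $m=1$ contribution to $V_3$, which is the only summand capable of producing order $n^{2k}$, since for $m \geq 2$ one has $\binom{n-i}{2k+1-m} = O(n^{2k-1})$. Expanding
\[
\eta(i)^2 = (1 - 2p^{k+1} + p^{2k+2})^{i-1} + (1 - 2p^k + p^{2k})^{i-1} - 2(1 - p^{k+1} - p^k + p^{2k+1})^{i-1}
\]
via the identities $(1-p^{k+1})^2 = 1 - 2p^{k+1} + p^{2k+2}$, $(1-p^k)^2 = 1 - 2p^k + p^{2k}$, and $(1-p^{k+1})(1-p^k) = 1 - p^{k+1} - p^k + p^{2k+1}$, the bracketed expression in the $m=1$ slice of $V_3$ collapses after cancellation to $\alpha^{i-1} - \beta^{i-1}$, where $\alpha \coloneqq 1 - 2p^{k+1} + p^{2k+1}$ and $\beta \coloneqq 1 - 2p^{k+1} + p^{2k+2}$. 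Since $\alpha - \beta = p^{2k+1}(1-p) > 0$, this quantity is nonnegative and strictly positive for $i \geq 2$. Multiplying by $\binom{n-i}{2k}\binom{2k}{k}$ and summing the convergent geometric series, one obtains
\[
V_3 \geq \binom{2k}{k}\binom{n}{2k}\cdot\frac{p^{2k+1}(1-p)}{(1-\alpha)(1-\beta)}\cdot(1 - o(1)) = \Omega(n^{2k}).
\]

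Combining these estimates gives $\Var(T_{k+1}) \geq p^{2\binom{k+1}{2}}\cdot\Omega(n^{2k}) - O(n^{2k-1})$, which exceeds $C_{p,k}n^{2k}$ once $n$ surpasses a threshold $N_{p,k}$. The main obstacle will be the algebraic bookkeeping at the third step: one must carefully verify that expanding $\eta(i)^2$ and substituting into the $m=1$ slice of $V_3$ produces the clean cancellation claimed, and that the sub-leading $O(n^{2k-1})$ contributions from $V_1$, $V_2$ and from $m \geq 2$ in $V_3$ cannot conspire to cancel the leading-order $V_3$ term. The geometric decay in $i$ and in $j-i$ of the exponential factors ensures that all implicit constants depend only on $p$ and $k$, not on $n$.
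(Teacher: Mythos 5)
Your proposal is correct and follows essentially the same route as the paper: both start from the decomposition in Lemma \ref{lemma:var_crit_lexi}, show that $V_1$, $V_2$, $V_4$ and the $m\geq 2$ part of $V_3$ contribute only $O(n^{2k-1})$ (using the geometric decay in $i$ and $j-i$ together with the loss of one power of $n$ in the binomial coefficients), and extract the positive $\Omega(n^{2k})$ leading term from the $m=1$ slice of $V_3$, which after the cancellation you describe is exactly $\sum_i\binom{n-i}{2k}\binom{2k}{k}(\alpha^{i-1}-\beta^{i-1})$ with $\alpha-\beta=p^{2k+1}(1-p)>0$ — the paper's $R_4$. The only cosmetic difference is that the paper lower-bounds this sum by its single $i=2$ term rather than summing the geometric series.
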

	In Lemma \ref{lemma:lower_bound_var_crit_lexi} the constant could have been  made explicit at the expense of an even longer calculation.
	
	Just knowing the expectation and the variance can already give us some information about the variable. For example, we obtain the following proposition. This proposition shows that considering only a subset of the simplices already gives a good approximation for the critical simplex counts.
	We recall the notation that $f(n)= \omega (g(n))$ indicates that $\lim_{n \rightarrow \infty} \frac{f(n)}{g(n)} = \infty$. 
	
	\begin{proposition}\label{proposition:crit_lexi_uptoK}
		Fix $k \in [n]$. Let $K \leq n - k$ and set the random variable: \[T_{k+1}^K \coloneqq \sum_{\substack{s \in C_{k+1} \\ \min(s) \leq K}} \prod_{i \neq j \in s} Y_{i,j} \left[ \prod_{i=1}^{\min(s) - 1}\left(1 - \prod_{j \in s}Y_{i,j}\right) - \prod_{i=1}^{\min(s) - 1}\left(  1 - \prod_{j \in s_{-}} Y_{i,j}\right)  \right]  .\]
		
		If $K =K(n)= \omega(\ln^{1 + \epsilon}(n))$ for any $\epsilon > 0$, then the variable $T_{k+1} - T_{k+1}^K$ vanishes with high probability, provided that $p$ and $k$ stay constant. 
	\end{proposition}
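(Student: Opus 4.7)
The plan is a routine first moment argument via Markov's inequality. Observe that $T_{k+1} - T_{k+1}^K = \sum_{s \in C_{k+1},\, \min(s) > K} \ind{s \text{ critical}}$ is a non-negative integer-valued random variable, so $\PP(T_{k+1} - T_{k+1}^K \geq 1) \leq \EE\{T_{k+1} - T_{k+1}^K\}$, and it therefore suffices to show this expectation tends to zero as $n \to \infty$.

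To compute the expectation, I would rerun exactly the calculation in the proof of Lemma \ref{lemma:expect_crit_lexi}, but with the outer sum over $\min(s) = l$ restricted to $l > K$. The same identity then yields
\[
\EE\{T_{k+1} - T_{k+1}^K\} = p^{\binom{k+1}{2}} \sum_{l=K}^{n-k-1} \binom{n-l-1}{k} \bigl\{(1-p^{k+1})^l - (1-p^k)^l\bigr\}.
\]
Dropping the negative term (valid since $1-p^{k+1} > 1-p^k$), using the crude bound $\binom{n-l-1}{k} \leq n^k/k!$ for fixed $k$, and summing the resulting geometric series gives
\[
\EE\{T_{k+1} - T_{k+1}^K\} \leq \frac{p^{\binom{k+1}{2} - k - 1}}{k!} \cdot n^k (1-p^{k+1})^K =: C_{p,k} \cdot n^k (1-p^{k+1})^K.
\]

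Since $p \in (0,1)$ and $k$ are held fixed, the constant $|\log(1-p^{k+1})|$ is strictly positive, and the hypothesis $K = \omega(\ln^{1+\epsilon}(n))$ in particular implies $K = \omega(\log n)$. Consequently
\[
\log\bigl(n^k (1-p^{k+1})^K\bigr) = k \log n - K \cdot |\log(1-p^{k+1})| \to -\infty,
\]
so the upper bound vanishes. Combined with Markov's inequality this gives $\PP(T_{k+1} - T_{k+1}^K \geq 1) \to 0$, which is exactly the claimed vanishing with high probability.

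I do not anticipate any serious obstacle. The only conceptual ingredient is that $\PP(s \text{ critical} \mid \min(s) = l)$ decays geometrically in $l$ (at rate at least $1-p^{k+1}$), while for each $l$ the number of candidate $(k+1)$-subsets $s \in C_{k+1}$ with $\min(s) = l$ is only polynomial in $n$. Thus, as soon as $K$ grows faster than any multiple of $\log n$, the exponential factor $(1-p^{k+1})^K$ dominates the polynomial prefactor $n^k$. The $\ln^{1+\epsilon}(n)$ hypothesis is comfortably stronger than what is strictly needed for vanishing in probability; it would additionally guarantee summability of the tail probabilities and hence almost-sure vanishing via Borel--Cantelli, though this is not required for the statement as written.
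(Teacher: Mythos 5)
Your proposal is correct and follows essentially the same route as the paper: Markov's inequality applied to the first moment, computed by restricting the sum in Lemma \ref{lemma:expect_crit_lexi} to $\min(s) > K$, dropping the negative term, and bounding by $p^{\binom{k+1}{2}-k-1}\frac{n^k}{k!}(1-p^{k+1})^K$ via the geometric series. Your closing observation that $K=\omega(\log n)$ already suffices for the stated conclusion is accurate and slightly sharper in spirit than the paper's phrasing, but the argument is identical.
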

	
	\begin{proof}
		A similar calculation to that for Lemma \ref{lemma:expect_crit_lexi} shows that: 
		\begin{align*}
		\EEbracket{T_{k+1} - T_{k+1}^K} &= \sum_{i=K+1}^{n-k} \binom{n-i}{k}p^{\binom{k+1}{2}} \left\{ (1-p^{k+1})^{i-1} - (1-p^k)^{i-1} \right\} \\
		&\leq \binom{n}{k}p^{\binom{k+1}{2}}(1-p^{k+1})^K\sum_{i=0}^{\infty} (1-p^{k+1})^{i} \leq p^{\binom{k+1}{2}-k-1}\frac{n^k}{k!}(1-p^{k+1})^K.
		\end{align*}
		
		Using Markov's inequality, we get:
		\[\PP(T_{k+1} - T_{k+1}^K \geq 1) \leq p^{\binom{k+1}{2}-k-1}\frac{n^k}{k!}(1-p^{k+1})^K,\]
		which asymptotically vanishes as long as $K = \omega(\ln^{1+\epsilon}(n))$. 
	\end{proof}
	
	\subsection{Approximation theorem}
	For $i \in [d]$, recall a random variable counting $i$-simplices in $\xnp$ that are critical under the lexicographical matching, as given in \eqref{equation:def_lexi_crit}. We write for the $i$-th index set $\I_i \coloneqq C_{i+1} \times \set{i}$. For $s = (\phi,i) \in \I_i$ we write \[\mu_{s} = p^{\binom{i+1}{2}}\left((1-p^{i+1})^{\min(\phi)-1}-(1-p^{i})^{\min(\phi)-1}\right)\] and $\sigma_i = \sqrt{\Var(T_{i+1})}$. Let \[X_{s} = \sigma_i^{-1}\left\{ \prod_{i \neq j \in \phi} Y_{i,j} \left[ \prod_{i=1}^{\min(\phi) - 1}\left(1 - \prod_{j \in \phi}Y_{i,j}\right) - \prod_{i=1}^{\min(\phi) - 1}\left(  1 - \prod_{j \in \phi_{-}} Y_{i,j}\right)  \right]  - \mu_{s}\right\}.\]
	
	Let $W_i = \sum_{s \in \I_i} X_{s}$ and $W = (W_1, W_2, \ldots, W_d) \in \RR^d$. For bounds that asymptotically go to zero for this example, we use Theorems \ref{theorem:mvn_dissociated_decomp_approx} and \ref{theorem:mvn_dissociated_decomp_approx_convex_sets} directly:  the uniform bounds from Corollary \ref{corollary:mvn_dissociated_decomp_approx} are not fine enough here.
	
	\begin{theorem}\label{theorem:crit_lexi_approx}
		Let $Z \sim \mvn{0}{\text{\rm Id}_{d \times d}}$ and $\Sigma$ be the covariance matrix of $W$.
		\begin{enumerate}
			\item Let $h \in \testzero$. Then there is a constant $B_{\ref{theorem:crit_lexi_approx}.1} >0$ independent of $n$ and a natural number $N_{\ref{theorem:crit_lexi_approx}.1}$ such that for any $n \geq N_{\ref{theorem:crit_lexi_approx}.1}$ we have\[\abs{\EE h(W) - \EE h(\Sigma^{\frac{1}{2}}Z)} \leq B_{\ref{theorem:crit_lexi_approx}.1} \abs{h}_3 n^{-1}.\]
			\item Let $\mathcal{K}$ be the class of convex sets in $\RR^d$. Then there is a constant $B_{\ref{theorem:crit_lexi_approx}.2} >0$ independent of $n$ and a natural number $N_{\ref{theorem:crit_lexi_approx}.2}$ such that for any $n \geq N_{\ref{theorem:crit_lexi_approx}.2}$ we have\[\sup _{A \in \mathcal{K}}|\PP(W \in A) -\PP(\Sigma^{\frac{1}{2}}Z \in A)| \leq B_{\ref{theorem:crit_lexi_approx}.2}n^{-\frac{1}{4}}.\]
		\end{enumerate}
	\end{theorem}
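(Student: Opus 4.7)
The plan is to apply Theorem \ref{theorem:mvn_dissociated_decomp_approx} for part (1) and Theorem \ref{theorem:mvn_dissociated_decomp_approx_convex_sets} for part (2) to the vector $W$, which requires identifying dependency neighborhoods for each $s = (\phi,i) \in \I_i$. For this, let $E(s)$ denote the set of edges appearing in the product that defines $X_s$: all pairs $\{a,b\} \subseteq \phi$ together with all pairs $\{c,b\}$ with $c \in [\min(\phi)-1]$ and $b \in \phi$. Since the Bernoulli family $\{Y_{a,b}\}_{1 \leq a < b \leq n}$ is independent, setting
\[
\D_j(s) := \{\, t = (\psi,j) \in \I_j : E(s) \cap E(t) \neq \varnothing \,\}
\]
immediately satisfies the three axioms of Definition \ref{def:disrand}. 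A short case analysis of how two such edge sets can share an edge shows that $t \in \D_j(s)$ forces $\phi \cap \psi \neq \varnothing$, giving the uniform bound $|\D_j(s)| \leq (i+1)\binom{n-1}{j} = O(n^j)$.

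To control the summands of $B_{\ref{theorem:mvn_dissociated_decomp_approx}}$, I would use Lemma \ref{lemma:unified_moments} with $c_s = \sigma_{|s|}^{-1}$, the variance bounds $\sigma_i = \Theta(n^i)$ (from Lemma \ref{lemma:lower_bound_var_crit_lexi} combined with a matching upper bound obtained by the same kind of Appendix computation as Lemma \ref{lemma:var_crit_lexi}), and the decay estimate
\[
\mu_s \leq p^{\binom{|s|+1}{2}}(1-p^{|s|+1})^{\min(\phi)-1}
\]
coming directly from the closed form of $\mu_s$. The exponential decay in $\min(\phi)$ is crucial: partitioning $\I_i$ according to $\min(\phi)$ and summing the corresponding geometric series yields $\sum_{s \in \I_i}\sqrt{\mu_s} = O(n^i)$ and $\sum_{s \in \I_i}\mu_s = \EE T_{i+1} = O(n^i)$, each losing a full factor of $n$ compared with $|\I_i| = O(n^{i+1})$.

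The main obstacle is the careful bookkeeping needed to assemble these ingredients into $B_{\ref{theorem:mvn_dissociated_decomp_approx}} = O(n^{-1})$: plugging uniform bounds into Theorem \ref{theorem:mvn_dissociated_decomp_approx} naively yields only $O(1)$ in the mixed-dimension triples $(i,j,k)$. To achieve the rate $n^{-1}$, one must refine the estimate on $|\D_j(s)|$ in a way that depends on $\min(\phi)$: the edge-overlap condition defining $\D_j(s)$ forces structural constraints on $\min(\psi)$ which, combined with the decay of $\mu_s$ and of $\mu_t$ for $t \in \D_j(s)$, collectively produce an extra factor of $n^{-1}$ in each $(i,j,k)$-contribution. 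The same analysis handles the triples in $B_{\ref{theorem:mvn_dissociated_decomp_approx}.2}$ indexed by $v \in \D(t;s)$, since $v \in \D(t)$ is controlled by the analogous edge-overlap condition with $t$ in place of $s$.

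Once $B_{\ref{theorem:mvn_dissociated_decomp_approx}} = O(n^{-1})$ is established, part (1) follows directly from Theorem \ref{theorem:mvn_dissociated_decomp_approx}. For part (2), Theorem \ref{theorem:mvn_dissociated_decomp_approx_convex_sets} applied with this bound gives $O((n^{-1})^{1/4}) = O(n^{-1/4})$, as required.
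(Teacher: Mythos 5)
Your proposal is correct and follows essentially the same route as the paper: the same (vertex-overlap) dependency neighbourhoods, a direct application of Theorems \ref{theorem:mvn_dissociated_decomp_approx} and \ref{theorem:mvn_dissociated_decomp_approx_convex_sets} with non-uniform moment bounds via Lemma \ref{lemma:unified_moments} and Lemma \ref{lemma:lower_bound_var_crit_lexi}, and the key step of stratifying both the $s$-sum and the $t$-sum by $\min(\phi)$ and $\min(\psi)$ so that the geometric decay of the means, together with the one-vertex intersection constraint (which reduces the per-stratum count of $t\in\D_j(s)$ to $O(n^{j-1})$), yields $B_{\ref{theorem:mvn_dissociated_decomp_approx}}=O(n^{-1})$. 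The only cosmetic differences are that the paper defines $\D_j(s)$ directly by $|\phi\cap\psi|\ge 1$ rather than by edge overlap, and that only the variance lower bound of Lemma \ref{lemma:lower_bound_var_crit_lexi} is actually needed, not a matching upper bound.
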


	\begin{proof}  It is clear that $W$ satisfies the conditions of Theorems \ref{theorem:mvn_dissociated_decomp_approx} and \ref{theorem:mvn_dissociated_decomp_approx_convex_sets} 
		for any $s = (\phi,i) \in \I_i$ setting
		\[\D_j(s) = \setc{(\psi, j) \in \I_j}{ |\phi \cap \psi| \geq 1}.\]
		We apply Theorems \ref{theorem:mvn_dissociated_decomp_approx} and \ref{theorem:mvn_dissociated_decomp_approx_convex_sets}. For the bounds on the quantity $B_{\ref{theorem:mvn_dissociated_decomp_approx}}$ from Theorems \ref{theorem:mvn_dissociated_decomp_approx} and \ref{theorem:mvn_dissociated_decomp_approx_convex_sets} we use Lemma \ref{lemma:unified_moments} and Lemma \ref{lemma:lower_bound_var_crit_lexi}. We write $C$ for an unspecified positive constant that does not depend on $n$. Also, we assume here that $n$ is large enough for the bound in Lemma \ref{lemma:lower_bound_var_crit_lexi} to apply. Let $\mu(i,a) = p^{\binom{i+1}{2}}\left( (1-p^{i+1})^{a - 1}-(1-p^{i})^{a - 1} \right).$ Then we have:
		\begin{align*}
		&B_{\ref{theorem:mvn_dissociated_decomp_approx}} \leq \frac{1}{3} \sum_{i,j,k=1}^d \sum_{a=1}^{n-i} \sum_{\substack{\phi \in C_{i+1} \\ \min(\phi) = a}} \sum_{b=1}^{n-j} \sum_{\substack{(\psi,j) \in \D_j((\phi,i)) \\ \min(\psi) = b}} \\
		&\Bigg\{ \sum_{r \in \D_k((\phi,i))} \frac{3}{2}  (\sigma_i \sigma_j \sigma_k)^{-1} \left\{ \mu(i,a)\mu(j,b)(1-\mu(i,a))(1-\mu(j,b)) \right\}^{\frac{1}{2}}  \\
		&+ \sum_{r \in \D_k((\psi,j))} (\sigma_i \sigma_j \sigma_k)^{-1} \left\{ \mu(i,a)\mu(j,b)(1-\mu(i,a))(1-\mu(j,b)) \right\}^{\frac{1}{2}} \Bigg\}\\
		&\leq  \sum_{i,j,k=1}^d \sum_{a=1}^{n-i} \sum_{b=1}^{n-j} Cn^{i+j-1} 
		n^k n^{-i-j-k} \left\{ (1-p^{i+1})^{a - 1}(1-p^{j+1})^{b - 1} + (1-p^{i})^{a - 1}(1-p^{j})^{b - 1} \right\}^{\frac{1}{2}} 
		\\
		&\leq Cn^{-1} \sum_{i,j,k=1}^d \sum_{a=1}^{\infty} \sum_{b=1}^{\infty} \left[\left\{ (1-p^{i+1})^{a - 1}(1-p^{j+1})^{b - 1} \right\}^{\frac{1}{2}} + \left\{ (1-p^{i})^{a - 1}(1-p^{j})^{b - 1} \right\}^{\frac{1}{2}} \right] \\
		&\leq Cn^{-1}  d^3 \left\{ \frac{1}{(1 - \sqrt{1-p^{d+1}})^2} + \frac{1}{(1 - \sqrt{1-p^{d}})^2} \right\} \leq  C  n^{-1}. \\
		\end{align*}
		
	\end{proof}
	
	\begin{remark}\label{remark:TDA}
		The relevance of understanding the number of critical simplices in the context of applied and computational topology is as follows. We assume that $p \in (0,1)$ and $k \in \{1, 2, \ldots\} $ are constants.
		
		\begin{enumerate}
			\item As seen in Lemma \ref{lemma:expect_crit_lexi}, the expected number of critical $k$-simplices under the lexicographical matching is one power of $n$ smaller than the total number of $k$-simplices in $\xnp$.
			\item In light of our approximation Theorem \ref{theorem:crit_lexi_approx} we also know that the (rescaled) deviations from the mean are approximately normal and the bounds are of the same order of $n$ compared to the approximation of all simplex counts in $\xnp$ as given in Theorem \ref{theorem:face_count_approx}. Knowing the expectation and the variance from Lemmas \ref{lemma:var_crit_lexi} and \ref{lemma:expect_crit_lexi}, one can apply concentration inequalities, for example, Chebyshev's inequality, to show that the number of critical simplices concentrates around its mean. Hence, because of the concentration of measure, the computational improvements as a result of lexicographical matching are likely substantial in $\xnp$.
			\item From Proposition \ref{proposition:crit_lexi_uptoK}, it is very likely that in $\xnp$ all $k$-simplices $s \in \xnp$ with $\min(s) = \omega(\ln^{1+\epsilon}(n))$ for any fixed $\epsilon > 0$ are not critical. 
		\end{enumerate}
		
	\end{remark}
	
	\section{Simplex Counts in Links}\label{section:link}
	Consider a random simplicial complex $\xnp$. For $1 \leq i < j \leq n$ define the edge indicator $Y_{i,j} \coloneqq \ind{\set{i,j} \in \xnp}$. In this section we study the count of $(k-1)$-simplices that would be in the link of a fixed subset $t \subseteq [n]$ if the subset spanned a simplex in $\xnp$. Given that $t$ is a simplex, the variable counts the number of $(k-1)$-simplices in $\lk(t)$. Thus, the random variable of interest is
	\begin{equation}\label{equation:def_link_count}
	T^t_k = \sum_{s \in C_k} \left\{ \ind{t \cap s = \varnothing} \prod_{i \neq j \in s} Y_{i,j} \prod_{i \in s, j \in t} Y_{i,j} \right\}.
	\end{equation}
	Note that the product $ \prod_{i \in s, j \in t} Y_{i,j}  $ ensures that $t \cup s$ is a simplex if $t$ spans a simplex. 
	
	\begin{remark}\label{remark:link_counts}
		The random variable $T^t_k$ does not fit into the framework of generalised $U$-statistics, which we will discuss in Section \ref{section:gen_u_stat}, because the summands depend not only on the variables that are indexed by the subset $s$ and we do not sum over all subsets $s$ but rather only the ones that do not intersect $t$. 
		
		Moreover, note that given the  number of vertices of the link of a simplex $t$, the conditional distribution of the link of $t$ is again $\xnp[n'][p]$, where $n'$ is a random variable equal to the number of vertices in the link. If we are interested in such a conditional distribution, the results of Section \ref{section:gen_u_stat} apply. However, in this section we study the number of simplices in the link of $t$ given that $t$ is a simplex rather than given the number of vertices of the link of $t$. Such a random variable behaves differently from the simplex counts in $\xnp$ which are studied in Section \ref{section:gen_u_stat}. For example, the summands of $T_k^t$ have a different dependence structure compared to the summands of $T_k$ from Equation \ref{tk}. As a result, the approximation bounds are of different order.
		
		 It is natural to ask whether the results obtained in this section follow from those of Section \ref{section:gen_u_stat} below. This might well be the case, but the answer is not straightforward. One could derive an approximation for the number of simplices in $\lk(t)$ given the number of vertices in the link; the variable $T_k^t$ could then be approximated by a mixture, induced by the distribution of the number of vertices in the link (which is binomial). However, applying this approach na\"ively yields bounds that do not converge to zero. While it is certainly possible that a different approach would succeed, we prefer not to rely on Section \ref{section:gen_u_stat} and prove the approximation directly.
	\end{remark}

	\subsection{Moments}
	It is easy to see that for any positive integer $k$ and $t \subseteq [n]$, 
	\[
	\EE \{ T^t_{k+1} \} = \binom{n - |t|}{k+1} p^{\binom{k+1}{2} + |t|(k+1)} =: \binom{n - |t|}{k+1} \mu_{k+1}^t
	\]
	since there are $\binom{n - |t|}{k+1}$ choices for $s \in C_{k+1}$ such that $s \cap t  = \varnothing$. Next we derive a lower bound on the variance.
	
	\begin{lemma}\label{lemma:variance_link_counts}
		For any fixed $1 \leq k \leq n-1$ and $t \subseteq [n]$ we have: 
		\[\Var(T^t_{k+1}) \geq
		(k+1)\binom{n-|t|}{2k+1}  \binom{2k+1}{k} (\mu_{k+1}^t)^2 \left\{p^{-|t|} - 1\right\}.\]
	\end{lemma}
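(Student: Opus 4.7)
The approach is to expand the variance as a sum of covariances of the summand indicators and to keep only the positive contributions coming from pairs of subsets with overlap exactly one. Define, for each $s \in C_{k+1}$ with $s \cap t = \varnothing$, the indicator $X_s^t \coloneqq \ind{t \cap s = \varnothing}\prod_{i \neq j \in s} Y_{i,j} \prod_{i \in s, j \in t} Y_{i,j}$, so that $T^t_{k+1} = \sum_{s} X_s^t$ and $\EE X_s^t = \mu_{k+1}^t$. Writing
\[
\Var(T^t_{k+1}) = \sum_{s,s'} \Cov(X_s^t, X_{s'}^t),
\]
I will classify ordered pairs $(s,s')$ of admissible $(k+1)$-subsets of $[n]\setminus t$ by the overlap parameter $m = |s \cap s'| \in \{0,1,\ldots,k+1\}$.

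For each such pair, the product $X_s^t X_{s'}^t$ is a product of $Y_{i,j}$'s indexed by the pairs inside $s$ or $s'$ together with pairs from $s \cup s'$ to $t$. The ``doubly counted'' edges are exactly the $\binom{m}{2}$ edges inside $s \cap s'$ plus the $m|t|$ edges from $s \cap s'$ to $t$. Independence of distinct $Y_{i,j}$ then yields
\[
\EE[X_s^t X_{s'}^t] = p^{2\binom{k+1}{2} + 2(k+1)|t| - \binom{m}{2} - m|t|} = (\mu_{k+1}^t)^2 \, p^{-\binom{m}{2} - m|t|},
\]
so that $\Cov(X_s^t, X_{s'}^t) = (\mu_{k+1}^t)^2 (p^{-\binom{m}{2} - m|t|} - 1)$. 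This vanishes when $m=0$ (confirming that non-overlapping pairs are independent, since then the relevant edge sets are disjoint) and is strictly positive when $m \geq 1$.

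Since every covariance is nonnegative, I will discard the variance diagonal and all pairs with $m \geq 2$, retaining only the $m=1$ contribution, which equals $(\mu_{k+1}^t)^2 (p^{-|t|} - 1)$. It remains to count the ordered pairs $(s,s')$ of distinct admissible $(k+1)$-subsets with $|s \cap s'| = 1$: choose the union $s \cup s' \subset [n]\setminus t$ of size $2k+1$ in $\binom{n-|t|}{2k+1}$ ways, choose the single shared vertex in $2k+1$ ways, and split the remaining $2k$ vertices into the two blocks of size $k$ in $\binom{2k}{k}$ ways. The identity $(2k+1)\binom{2k}{k} = (k+1)\binom{2k+1}{k}$ (equivalently $\binom{2k+1}{k+1} = \frac{2k+1}{k+1}\binom{2k}{k}$) puts the count into the stated form, and assembling these pieces yields the claimed lower bound. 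There is no real obstacle here beyond careful bookkeeping of which edges are shared between the two monomials; the key qualitative point is that the covariance structure is purely positive, which is what lets us extract a clean lower bound.
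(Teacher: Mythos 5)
Your proposal is correct and follows essentially the same route as the paper: expand the variance into covariances, observe that the covariance of two summands with overlap $m$ equals $(\mu_{k+1}^t)^2(p^{-\binom{m}{2}-m|t|}-1)\geq 0$, and retain only the $m=1$ terms. The paper counts the $m=1$ pairs as $\binom{n-|t|}{k+1}(k+1)\binom{n-|t|-k-1}{k}$ (pick $s$, then the shared vertex, then the rest of $s'$) rather than via the union-first argument you give, but the two counts agree.
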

	
	\begin{proof}
		First let us calculate $\Cov(T^t_{k+1}, T^t_{l+1})$. For fixed subsets $s \in C_{k+1}$ and $u \in C_{l+1}$ if $|s \cap u| = 0$, then the corresponding variables $\prod_{i \neq j \in s} Y_{i,j} \prod_{i \in s, j \in t} Y_{i,j}$ and $\prod_{i \neq j \in u} Y_{i,j} \prod_{i \in u, j \in t} Y_{i,j}$ are independent and so have zero covariance.
		
		For $1 \leq m \leq l + 1$, the number of pairs of subsets $s \in C_{k+1}$ and $u \in C_{l+1}$ such that $s \cap t = \varnothing = u \cap t$ and $|s \cap u| = m$ is $\binom{n-|t|}{k+1} \binom{k+1}{m} \binom{n-|t| - k - 1}{l + 1-m}$. Since each summand is non-negative, we lower bound by the $m=1$ summand and get (with $\binom{1}{2} := 0$) 
		\begin{align*}
		&\Cov(T^t_{k+1}, T^t_{l+1}) \\
		&= \sum_{m=1}^{l+1} \binom{n-|t|}{k+1} \binom{k+1}{m} \binom{n-|t| - k - 1}{l + 1-m}  \Big\{ \mu_{k+1}^t\mu_{l+1}^t  p^{- \binom{m}{2}} p^{-|t|m} -\mu_{k+1}^t\mu_{l+1}^t  \Big\} \\
		&\geq \binom{n-|t|}{k+1} (k+1) \binom{n-|t| - k - 1}{l} \mu_{k+1}^t\mu_{l+1}^t  \left\{p^{-|t|} - 1\right\} \\
		&= (k+1)\binom{n-|t|}{l+k+1}  \binom{l+k+1}{l} \mu_{k+1}^t\mu_{l+1}^t \left\{p^{-|t|} - 1\right\} .
		\end{align*}
		
		Taking $l = k$ completes the proof.
	\end{proof}
	
	\subsection{Approximation theorem}
	For a multivariate normal approximation of counts given in Equation \eqref{equation:def_link_count}, we write $\sigma_i = \sqrt{\Var(T_{i+1})}$ and $C_{i+1}^t = \setc{\phi \in C_{i+1}}{\phi \cap t = \varnothing}$, as well as $\I_i \coloneqq C_{i+1}^t \times \set{i}$. For $s = (\phi,i) \in \I_i$ define $$X_{s} = \sigma_i^{-1}(\prod_{i \neq j \in \phi} Y_{i,j} \prod_{i \in \phi, j \in t} Y_{i,j} - \mu_{i+1}^t).$$ It is clear that $\EEbracket{X_{s}} = 0$.
	Let $W^t_i = \sum_{s \in \I_i} X_{s}$ and $W^t = (W^t_1, W^t_2, \ldots, W^t_d) \in \RR^d$. Then we have the following approximation theorem. 
	
	\begin{theorem}\label{theorem:link_count_approx}
		Let $Z \sim \mvn{0}{\text{\rm Id}_{d \times d}}$ and $\Sigma$ be the covariance matrix of $W^t$. 
		\begin{enumerate}
			\item Let $h \in \testzero$. Then \[ \abs{\EE h(W^t) - \EE h(\Sigma^{\frac{1}{2}}Z)} \leq \abs{h}_3 B_{\ref{theorem:link_count_approx}} (n - |t|)^{-\frac{1}{2}}. \]
			\item Let $\mathcal{K}$ be the class of convex sets in $\RR^d$. Then \[\sup _{A \in \mathcal{K}}|\PP(W^t \in A) -\PP(\Sigma^{\frac{1}{2}}Z \in A)| \leq 2^{\frac{7}{2}} 3^{-\frac{3}{4}}d^{\frac{3}{16}}B_{\ref{theorem:link_count_approx}}^{\frac{1}{4}}(n - |t|)^{-\frac{1}{8}}.\]
		\end{enumerate}
		Here \[B_{\ref{theorem:link_count_approx}} = \frac{7}{6}  (2d+1)^{5d+\frac{17}{2}} (p^{-|t|} - 1)^{-\frac{3}{2}}p^{-(d+1)(d+2|t|)}.\]
		
	\end{theorem}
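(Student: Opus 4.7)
The plan is to apply Theorems \ref{theorem:mvn_dissociated_decomp_approx} and \ref{theorem:mvn_dissociated_decomp_approx_convex_sets} with the dependency neighbourhood structure dictated by vertex overlap. For each $s = (\phi,i) \in \I_i$ I will take
\[
\D_j(s) = \setc{(\psi, j) \in \I_j}{|\phi \cap \psi| \geq 1}.
\]
Since each summand $X_s$ is a deterministic function of the edge indicators $Y_{a,b}$ with at least one endpoint in $\phi$ and the other in $\phi \cup t$ (where $t$ here denotes the fixed simplex of the statement), two such collections of edges are jointly independent whenever the corresponding vertex sets $\phi, \psi \subseteq [n]\setminus t$ are disjoint. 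A routine verification then shows that the three conditions of Definition \ref{def:disrand} are satisfied, so $W^t$ is a vector of dissociated sums.

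With the dependency neighbourhoods in hand I will bound the summands appearing in $B_{\ref{theorem:mvn_dissociated_decomp_approx}}$. The cardinalities obey $|\I_i| \leq \binom{n-|t|}{i+1}$ and $|\D_j(s)|,|\D_j(r;s)| \leq (i+1)\binom{n-|t|-1}{j} = O(n^j)$. For moments, I will invoke Lemma \ref{lemma:unified_moments} with scaling $c_i = 1/\sigma_i$ and mean $\mu_{i+1}^t = p^{\binom{i+1}{2}+|t|(i+1)}$ to obtain, for any $(s,r,u) \in \I_i \times \I_j \times \I_k$,
\[
\EE|X_s X_r X_u|,\ \EE|X_s X_r|\,\EE|X_u| \;\leq\; \frac{(\mu_{i+1}^t\,\mu_{j+1}^t)^{1/2}}{\sigma_i\sigma_j\sigma_k}.
\]
The key ingredient is then the variance lower bound of Lemma \ref{lemma:variance_link_counts}, which yields $\sigma_i \geq c_{i,|t|,p}\cdot n^{i+1/2}\cdot \mu_{i+1}^t\cdot (p^{-|t|}-1)^{1/2}$ for an explicit constant $c_{i,|t|,p}$. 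Substituting this into the previous display cancels the $\mu$ factors in the numerator and leaves behind $(p^{-|t|}-1)^{-3/2}$, together with a $p^{-(d+1)(d+2|t|)}$-type contribution from the leftover $\mu_{k+1}^t$ in the denominator after uniformising over $i,j,k \in [d]$. Combining the cardinality and moment estimates for each triple $(i,j,k) \in [d]^3$ produces contributions of order
\[
n^{(i+1)+j+k}\cdot n^{-(i+j+k+3/2)} \;=\; n^{-1/2},
\]
so after coarsening the residual binomial prefactors into the uniform shape $(2d+1)^{5d+17/2}$ one obtains $B_{\ref{theorem:mvn_dissociated_decomp_approx}} \leq B_{\ref{theorem:link_count_approx}}(n-|t|)^{-1/2}$. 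Statement (1) follows immediately; statement (2) then follows by plugging the same estimate into Theorem \ref{theorem:mvn_dissociated_decomp_approx_convex_sets}, whose fourth-root scaling turns $(n-|t|)^{-1/2}$ into $(n-|t|)^{-1/8}$ and introduces the prefactor $2^{7/2}3^{-3/4}d^{3/16}$.

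The main obstacle will be bookkeeping: the statement demands an explicit constant, including its $d$-dependence, rather than an asymptotic rate. Each $(i+1)$-type factor from the dependency-neighbourhood count and each $\binom{2i+1}{i}^{-1/2}$-type factor inherited from Lemma \ref{lemma:variance_link_counts} must be tracked and uniformly dominated over $i \in [d]$ before being absorbed into $(2d+1)^{5d+17/2}$. I will also need to separate the $\D_j(s)$ and $\D_j(r;s)$ contributions to $B_{\ref{theorem:mvn_dissociated_decomp_approx}}$ without double-counting, and to package the resulting ratio entirely in terms of $p$, $|t|$ and $d$ so as to isolate the clean factor $p^{-(d+1)(d+2|t|)}(p^{-|t|}-1)^{-3/2}$.
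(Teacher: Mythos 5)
Your proposal follows essentially the same route as the paper: the same dependency neighbourhoods $\D_j(s)$ defined by vertex overlap, the moment bounds from Lemma \ref{lemma:unified_moments}, the variance lower bound of Lemma \ref{lemma:variance_link_counts} giving $\sigma_i^{-1} = O\bigl((n-|t|)^{-i-\frac{1}{2}}(\mu_{i+1}^t)^{-1}(p^{-|t|}-1)^{-\frac{1}{2}}\bigr)$, and the resulting $n^{-1/2}$ cancellation per triple $(i,j,k)$, with part (2) obtained by feeding the bound into Theorem \ref{theorem:mvn_dissociated_decomp_approx_convex_sets}. The paper packages the uniformisation over $(i,j,k)$ through Corollary \ref{corollary:mvn_dissociated_decomp_approx} rather than applying Theorem \ref{theorem:mvn_dissociated_decomp_approx} directly, but this is only a cosmetic difference from your bookkeeping.
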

	
	\begin{proof}
		It is clear that $W^t$ satisfies the conditions of Corollary \ref{corollary:mvn_dissociated_decomp_approx} with the dependency neighbourhood $\D_j(s) = \setc{(\psi,j) \in \I_j}{ |\phi \cap \psi| \geq 1}$ for any $s = (\phi,i) \in \I_i$. So we aim to bound the quantity $B_{\ref{corollary:mvn_dissociated_decomp_approx}}$ from the corollary.
		
		Given $\phi \in C^t_{i+1}$ and $m \le \min (i+1, j+1)$ there are $\binom{i+1}{m}\binom{n-|t| - i-1}{j+1-m}$ subsets $\psi \in C^t_{j+1}$ such that $|\phi \cap \psi| = m$. Therefore, for any $i,j \in [d]$ and $s \in \I_i$ we have 
		\begin{eqnarray}\label{equation:link1}
		|\D_j(s)| &=& \sum_{m=1}^{\min(i,j) + 1} \binom{i+1}{m} \binom{n-|t|-i-1}{j+1-m} \nonumber \\
		&\leq & (i+1)^{\min(i,j)+2} (n-|t|)^{j} \nonumber \\
		&\leq & (d+1)^{d+2} (n-|t|)^{j}
		\end{eqnarray}
		giving a bound for $\alpha_{ij}$. For a bound on $\beta_{ijk}$, applying Lemma \ref{lemma:unified_moments}, for any $i,j,k \in [d]$ and $s \in \I_i, u \in \I_j, v \in \I_k$ we get 
		\begin{align}\label{equation:link2}
		&\EE \abs{X_{s} X_{u} X_{v}} \leq (\sigma_i \sigma_j \sigma_k)^{-1} \left\{ \mu_{i+1}^t\mu_{j+1}^t(1-\mu_{i+1}^t)(1-\mu_{j+1}^t) \right\}^{\frac{1}{2}}; \\
		&\EE \abs{X_{s} X_{u}} \EE \abs{X_{v}} \leq (\sigma_i \sigma_j \sigma_k)^{-1}  \left\{ \mu_{i+1}^t\mu_{j+1}^t(1-\mu_{i+1}^t)(1-\mu_{j+1}^t) \right\}^{\frac{1}{2}}.
		\end{align}
		Now we apply Corollary \ref{lemma:variance_link_counts} and get
		\begin{align*}
		\sigma_i^2 &\geq (i+1)\binom{n-|t|}{2i+1}  \binom{2i+1}{i} (\mu_{k+1}^t)^2 \left\{p^{-|t|} - 1\right\}  \\
		&\geq \frac{(n-|t|)^{2i+1}}{(2d+1)^{d+1}d^{d}} (\mu_{k+1}^t)^2 \left\{p^{-|t|} - 1\right\} .
		\end{align*}
		Taking both sides of the inequality to the power of $-\frac{1}{2}$ we get for any $i \in [d]$
		\begin{equation}\label{equation:link3}
		\sigma_i^{-1} \leq (n-|t|)^{-i - \frac{1}{2}}(2d+1)^{\frac{d+1}{2}}d^{\frac{d}{2}} (\mu_{k+1}^t)^{-1}\left\{p^{-|t|} - 1\right\}^{-\frac{1}{2}}.
		\end{equation}
		
		Using Equations \eqref{equation:link1} - \eqref{equation:link3} to bound $B_{\ref{corollary:mvn_dissociated_decomp_approx}}$ from Corollary \ref{corollary:mvn_dissociated_decomp_approx} we get:
		\begin{align*}
		& B_{\ref{corollary:mvn_dissociated_decomp_approx}}
		\leq \frac{7}{6} \sum_{i,j,k=1}^d \binom{n-|t|}{i+1} (d+1)^{2d+4} (n-|t|)^{j+k} (\sigma_i \sigma_j \sigma_k)^{-1} \\
		&\left\{ \mu_{i+1}^t\mu_{j+1}^t(1-\mu_{i+1}^t)(1-\mu_{j+1}^t) \right\}^{\frac{1}{2}} \\
		\leq &\frac{7}{6}  \sum_{i,j,k=1}^d (n-|t|)^{i+j+k+1} (d+1)^{2d+4} (n-|t|)^{-i -j -k - \frac{3}{2}} (2d+1)^{\frac{3d+3}{2}}d^{\frac{3d}{2}} \\
		&(p^{-|t|} - 1)^{-\frac{3}{2}}(\mu_{k+1}^t\mu_{i+1}^t\mu_{j+1}^t)^{-1} \left\{ \mu_{i+1}^t\mu_{j+1}^t(1-\mu_{i+1}^t)(1-\mu_{j+1}^t) \right\}^{\frac{1}{2}} \\
		\leq & (n - |t|)^{-\frac{1}{2}} \frac{7}{6}  (2d+1)^{5d+\frac{11}{2}} (p^{-|t|} - 1)^{-\frac{3}{2}} \sum_{i,j,k=1}^d \left((\mu_{i+1}^t\mu_{j+1}^t)^{-1}(\mu_{k+1}^t)^{-2}\right)^{\frac{1}{2}} \\
		\leq &\left\{\frac{7}{6}  (2d+1)^{5d+\frac{17}{2}} (p^{-|t|} - 1)^{-\frac{3}{2}}p^{-(d+1)(d+2|t|)} \right\} (n - |t|)^{-\frac{1}{2}}.
		\end{align*}
	\end{proof}

	\begin{remark}\label{remark:size_of_link}
		Recall that $\EEbracket{T^t_{k+1}} = \binom{n - |t|}{k+1}p^{\binom{k+1}{2} + |t|(k+1)}$. By Stirling's approximation, if $p \in (0,1)$ is a constant,  then $\max(k, |t|) = \Omega(\ln^{1+\epsilon}(n))$ for any positive $\epsilon$ forces the expectation to go to $0$ asymptotically. Hence, by Markov's inequality, with high probability there are no $k$-simplices in the link of $t$ as long as $\max(k, |t|)$ is of order $\ln^{1+\epsilon}(n)$ or larger for any $\epsilon > 0$ for a constant $p$.
		
		Recall that in Theorem \ref{theorem:link_count_approx} we count all simplices up to dimension $d$ in the link of $t$. Note that if $\max(d^2, d|t|) = O(\ln^{1 - \epsilon}(n))$ for any $\epsilon > 0$, then the bounds in Theorem \ref{theorem:link_count_approx} tend to $0$ as $n$ tends to infinity as long as $p \in (0,1)$ stays constant. In particular, if $d$ is a constant, Theorem \ref{theorem:link_count_approx} gives an approximation for all sizes of $t$ for which the approximation is needed.
	\end{remark}

	\section{Simplex Counts in \texorpdfstring{$\xnp$}{X(n,p)}}\label{section:gen_u_stat}
	
	In this section we study the simplex counts in $\xnp$ or, equivalently, the clique counts in $\gnp$. In order to do that, we prove a multivariate normal approximation theorem for generalised $U$-statistics, which might be of independent interest. The approximation theorem for simplex counts in $\xnp$ then follows as a special case. 
	
	Here we consider generalised $U$-statistics, which were first introduced in \cite{janson1991asymptotic}. We expand the notion slightly by considering independent but not necessarily identically distributed variables instead of i.i.d.\,variables.
	
	Let $\{\xi_i\}_{1 \leq i \leq n}$  be a sequence of of independent random variables taking values in a measurable set  $\mathcal{X}$ and  let $\{Y_{i,j}\}_{1 \leq i < j \leq n}$ be an array of of independent random variables taking values in a measurable set $\mathcal{Y}$ which is independent of $\{\xi_i\}_{1 \leq i \leq n}$. We use the convention that $Y_{i,j} = Y_{j,i}$ for any $i < j$. For example, one can think of $X_i$ as a random label of a vertex $i$ in a random graph where $Y_{i,j}$ is the indicator for the edge connecting $i$ and $j$. Given a subset $s \subseteq [n]$ of size $m$, write $s = \set{s_1, s_2, \ldots, s_m}$  such that $s_1 < s_2 < \ldots < s_m$ and set $\mathcal{X}_s = (\xi_{s_1}, \xi_{s_2}, \ldots, \xi_{s_m})$ and $\mathcal{Y}_s = (Y_{s_1, s_2}, Y_{s_1, s_3}, \ldots Y_{s_{m-1},s_m})$. Recall that $C_k$ denotes the set of subsets of $[n]$ which are of size $k$.
	
	\begin{definition}
		\label{def:gen_u_statistic}
		Given $1 \leq k \leq n$ and a measurable function $f: \mathcal{X}^k \times \mathcal{Y}^k \to \RR$ define the associated generalised $U$-statistic by
		\[S_{n,k}(f) = \sum_{s \in C_k} f(\mathcal{X}_s, \mathcal{Y}_s).\]
	\end{definition}
	
	\subsection{The First Approximation Theorem}\label{subsection:approximation_ustat}
	
	Let $\set{k_i}_{i \in [d]}$ be a collection of positive integers, each being at most $n$, and for each $i \in [d]$ let $f_i: \mathcal{X}^{k_i} \times \mathcal{Y}^{k_i} \to \RR$ be a measurable function. We are interested in the joint distribution of $S_{n,k_1}(f_1), S_{n,k_2}(f_2), \ldots S_{n,k_d}(f_d)$.
	
	Fix $i \in [d]$. For $s \in \I_i \coloneqq C_{k_i} \times \set{i}$ define $X_{s} = \sigma_i^{-1}\left( f_i(\mathcal{X}_s, \mathcal{Y}_s) -  \mu_{s}\right)$, where $\mu_{s} = \EEbracket{f_i(\mathcal{X}_s, \mathcal{Y}_s)}$ and $\sigma_i^{2} = \Var(S_{n,k_i}(f_i))$. Now let $W_i = \sum_{s \in \I_i} X_{s}$ be a random variable and write $W = (W_1, W_2, \ldots W_d) \in \RR^d$.  By construction, $W_i$ has mean 0 and variance 1.
	
	\begin{assumption}\label{assumption:gen_u_stat}
		We assume that
		\begin{enumerate}
			\item For any $i \in [d]$ there is some $\alpha_i > 0$ such that for all $s, t \in \I_i$, the variables $f_i(\mathcal{X}_s, \mathcal{Y}_s), f_i(\mathcal{X}_t, \mathcal{Y}_t)$ are either independent or  $\Cov(f_i(\mathcal{X}_s, \mathcal{Y}_s), f_i(\mathcal{X}_t, \mathcal{Y}_t)) > \alpha_i$.
			\item There is $\beta \geq 0$ such that for any $i,j,l \in [d]$ and any $s \in \I_i, t \in \I_j, u \in \I_l$ we have \[\EE \abs{\left\{ f_i(\mathcal{X}_s, \mathcal{Y}_s) - \mu_{s} \right\}\left\{ f_j(\mathcal{X}_t, \mathcal{Y}_t) - \mu_{t} \right\}\left\{ f_l(\mathcal{X}_u, \mathcal{Y}_u) - \mu_{u} \right\}} \leq \beta\] as well as \[\EE \abs{\left\{ f_i(\mathcal{X}_s, \mathcal{Y}_s) - \mu_{s} \right\}\left\{ f_j(\mathcal{X}_t, \mathcal{Y}_t) - \mu_{t} \right\}} \EE \abs{ f_l(\mathcal{X}_u, \mathcal{Y}_u) - \mu_{u}} \leq \beta.\]
			\item The random variables $X_s$ have finite absolute third moments.
		\end{enumerate}
	\end{assumption}
	
	The first assumption is not necessary but very convenient and we use it to derive a lower bound for the variance $\sigma_i^2$. It holds in a variety of settings, for example, subgraph counts in a random graph. A normal approximation theorem can be proven in our framework when the assumption does not hold and a sufficiently large lower bound for the variance is acquired in a different way. Similarly, we use the second assumption to get a convenient bound on mixed moments. However, depending on a particular question at hand, one might want to use a bound on mixed moments, which is not uniform in $i,j,l$ and sometimes even one that is not uniform in $s,u,v$. We will discuss such an example (which does not fit into the framework of generalised $U$-statistics) in Section \ref{section:crit_lexi}. In this section, in order to maintain the generality and simplicity of the proofs, we work under Assumption \ref{assumption:gen_u_stat}. In \cite[Theorem 6]{janson1991asymptotic} it is assumed that all summands in the generalised $U$-statistic have finite second moment as well as that the sums admit a particular decomposition which is not easily translatable to our framework. In contrast to  \cite{janson1991asymptotic} we obtain a non-asymptotic bound on the normal approximation, as follows.
	
	\begin{theorem}\label{theorem:ustat_approx}
		Let $Z \sim \mvn{0}{\text{\rm Id}_{d \times d}}$ and let $W$ with covariance matrix $\Sigma$ satisfy Assumption \ref{assumption:gen_u_stat}.
		\begin{enumerate}
			\item Let $h \in \testzero$. Then 
			\[
			\abs{\EE h(W) - \EE h(\Sigma^{\frac{1}{2}}Z)} \leq \abs{h}_3  B_{\ref{theorem:ustat_approx}}  n^{-\frac{1}{2}}.
			\]
			
			\item Let $\mathcal{K}$  be a class of convex sets in $\RR^d$. Then 
			\[ \sup _{A \in \mathcal{K}}|\PP(W \in A)-\PP(\Sigma^{\frac{1}{2}}Z \in A)| \leq 2^{\frac{7}{2}} 3^{-\frac{3}{4}}d^{\frac{3}{16}}B_{\ref{theorem:ustat_approx}}^{\frac{1}{4}}n^{-\frac{1}{8}}. \]
		\end{enumerate}
		Here,
		\[
		B_{\ref{theorem:ustat_approx}} = \frac{2 \psi}{3} \sum_{i,j,l=1}^d \frac{k_i^{\min(k_i,k_j) + 1}}{k_i! \sqrt{\alpha_i \alpha_j \alpha_l}} \left(  k_i^{\min(k_i,k_l) + 1} + k_j^{\min(k_j,k_l) + 1} \right) K_i K_j K_l
		\]
		and
		\[
		K_i = (2k_i^2 - k_i)^{-\frac{k_i}{2} + \frac{1}{2}}.
		\]
	\end{theorem}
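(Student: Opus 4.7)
The plan is to recognise $W$ as a vector of dissociated sums in the sense of Definition \ref{def:disrand} and to apply Corollary \ref{corollary:mvn_dissociated_decomp_approx} together with Theorem \ref{theorem:mvn_dissociated_decomp_approx_convex_sets}. For each $s = (\phi,i) \in \I_i$ and each $j \in [d]$, I would take
\[
\D_j(s) \;\coloneqq\; \setc{(\psi,j) \in \I_j}{|\phi \cap \psi| \geq 1}.
\]
The joint independence of the arrays $\{\xi_i\}$ and $\{Y_{i,j}\}$ implies that whenever $\phi \cap \psi = \varnothing$, the blocks $(\mathcal{X}_\phi, \mathcal{Y}_\phi)$ and $(\mathcal{X}_\psi, \mathcal{Y}_\psi)$ depend on disjoint sets of underlying variables, so $X_s$ and $X_{(\psi,j)}$ are independent. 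This gives criterion (3) of Definition \ref{def:disrand} immediately, and criteria (1) and (2) follow because the residuals $W_j - \sum_{u \in \D_j(s)} X_u$ and $W_j - \sum_{u \in \D_j(s) \cup \D_j(t)} X_u$ are aggregates over subsets $\psi$ disjoint from $\phi$, or from $\phi \cup \phi_t$, respectively, hence independent of $X_s$ and of $(X_s, X_t)$.

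Having installed the dissociated structure, I would bound the three ingredients in $B_{\ref{corollary:mvn_dissociated_decomp_approx}}$. First, $\abs{\I_i} = \binom{n}{k_i} \leq n^{k_i}/k_i!$. Second, classifying the elements of $\D_j(s)$ by intersection size $m \geq 1$ yields
\[
\alpha_{ij} \;\leq\; \sum_{m=1}^{\min(k_i,k_j)} \binom{k_i}{m}\binom{n-k_i}{k_j-m} \;\leq\; k_i^{\min(k_i,k_j)+1}\, \frac{n^{k_j-1}}{(k_j-1)!}.
\]
Third, Assumption \ref{assumption:gen_u_stat}(2), after the $\sigma_i^{-1}$ rescaling that defines the $X_s$, delivers $\beta_{ijl} \leq \beta/(\sigma_i \sigma_j \sigma_l)$. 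The critical remaining step is a matching lower bound on $\sigma_i^2$: Assumption \ref{assumption:gen_u_stat}(1) makes every non-vanishing covariance between summands strictly greater than $\alpha_i$, so retaining only pairs $(s,t) \in C_{k_i}^2$ with $|s \cap t| \geq 1$ gives $\sigma_i^2 \geq \alpha_i \binom{n}{k_i} k_i \binom{n-k_i}{k_i-1} = \Omega(n^{2k_i-1})$. A careful factorisation of this lower bound, applying elementary estimates to the product of two binomials, produces $\sigma_i^{-1} \leq K_i \alpha_i^{-1/2} n^{-k_i+1/2}$ with $K_i = (2k_i^2 - k_i)^{-k_i/2+1/2}$ as in the statement.

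Substituting these estimates into the formula for $B_{\ref{corollary:mvn_dissociated_decomp_approx}}$, the factor $n^{k_i}$ from $\abs{\I_i}$ is cancelled by $n^{-k_i+1/2}$ from $\sigma_i^{-1}$, and similarly for $j$ and $l$, while the two $\alpha$-factors $\alpha_{ij}$ and $\tfrac{3}{2}\alpha_{il} + 2\alpha_{jl}$ contribute at most $n^{k_j+k_l-2}$; the surviving power is $n^{-1/2}$, yielding assertion (1). Assertion (2) then follows by feeding this smooth bound into Theorem \ref{theorem:mvn_dissociated_decomp_approx_convex_sets}, which converts an $n^{-1/2}$ smooth-test-function bound into an $n^{-1/8}$ convex-set bound. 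The conceptual steps are routine once the dependency neighbourhood is identified; I expect the main obstacle to be the combinatorial bookkeeping required to match the explicit constants in $B_{\ref{theorem:ustat_approx}}$, most notably tracking factorials and powers through the variance lower bound to produce the exponent $-k_i/2+1/2$ appearing in $K_i$.
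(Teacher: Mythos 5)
Your proposal follows essentially the same route as the paper's proof: the same dependency neighbourhoods $\D_j(s) = \setc{(\psi,j) \in \I_j}{|\phi \cap \psi| \geq 1}$, the same application of Corollary \ref{corollary:mvn_dissociated_decomp_approx}, the same bounds on $\abs{\I_i}$ and $\alpha_{ij}$ via classification by overlap size, the same use of Assumption \ref{assumption:gen_u_stat}(2) for $\beta_{ijl}$, and the same variance lower bound obtained by retaining the $m=1$ overlap term, followed by identical power counting in $n$. The argument is correct and complete at the level of the paper's own proof.
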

	
	\begin{proof}
		Note that if $s = (\phi,i) \in \I_i$ and $u = (\psi, j) \in \I_j$ are chosen such that $\phi \cap \psi = \varnothing$, then the corresponding variables $X_{s}$ and $X_{u}$ are independent since $f_i(\mathcal{X}_s, \mathcal{Y}_s)$ and $f_j(\mathcal{X}_u, \mathcal{Y}_u)$ do not share any random variables from the sets $\{\xi_i\}_{1 \leq i \leq n}$ and $\{Y_{i,j}\}_{1 \leq i < j \leq n}$. Hence, if for any $s = (\phi,i) \in \I_i$ we set $\D_j(s) = \setc{(\psi, j) \in \I_j}{|\phi \cap \psi| \geq 1}$, then $W$ satisfies the assumptions of Corollary \ref{corollary:mvn_dissociated_decomp_approx}. It remains to bound the quantity $B_{\ref{corollary:mvn_dissociated_decomp_approx}}$.
		
		First, to find $\alpha_{ij}$ as in Corollary \ref{corollary:mvn_dissociated_decomp_approx}, given $\phi \in C_{k_i}$ and if $k_i, k_j \ge m$ then there are $\binom{k_i}{m}\binom{n-k_i}{k_j-m}$ subsets $\psi \in C_{k_j}$ such that $|\phi \cap \psi| = m$. Therefore, we have for any $i,j \in [d]$ and $s \in \I_i$
		\begin{equation}\label{equation:ustat1}
		|\D_j(s)| = \sum_{m=1}^{\min(k_i,k_j)} \binom{k_i}{m}\binom{n-k_i}{k_j-m}  = \alpha_{ij} \leq k_i^{\min(k_i,k_j) + 1} (n-k_i)^{k_j-1}.
		\end{equation}
		Note that
		\[\EE \abs{X_{s} X_{t} X_{u}} = (\sigma_i \sigma_j \sigma_k)^{-1} \EE \abs{\left\{ f_i(\mathcal{X}_s, \mathcal{Y}_s) - \mu_{s} \right\}\left\{ f_j(\mathcal{X}_t, \mathcal{Y}_t) - \mu_{t} \right\}\left\{ f_l(\mathcal{X}_u, \mathcal{Y}_u) - \mu_{u} \right\}}\]
		as well as 
		\[\EE \abs{X_{s} X_{t}} \EE \abs{X_{u}} = (\sigma_i \sigma_j \sigma_k)^{-1} \EE \abs{\left\{ f_i(\mathcal{X}_s, \mathcal{Y}_s) - \mu_{s} \right\}\left\{ f_j(\mathcal{X}_t, \mathcal{Y}_t) - \mu_{t} \right\}} \EE \abs{ f_l(\mathcal{X}_u, \mathcal{Y}_u) - \mu_{u}}.\]
		Using Assumption \ref{assumption:gen_u_stat},
		for any $i,j,l \in [d]$ and $s \in \I_i, t \in \I_j, u \in \I_l$
		\begin{equation}\label{equation:ustat2}
		\EE \abs{X_{s} X_{t} X_{u}} \leq (\sigma_i \sigma_j \sigma_k)^{-1} \beta \; \text{ and } \;
		\EE \abs{X_{s} X_{t}} \EE \abs{X_{u}}\leq (\sigma_i \sigma_j \sigma_k)^{-1} \beta.
		\end{equation}
		To take care of the variance terms, we lower bound the variance using Assumption \ref{assumption:gen_u_stat};
		\begin{align*}
		\Var(S_{n,k_i}(f_i)) &= \sum_{s \in C_{k_i}} \sum_{t \in \D_i(s)} \Cov(f_i(\mathcal{X}_s, \mathcal{Y}_s), f_i(\mathcal{X}_t, \mathcal{Y}_t)) \\
		&= \sum_{m=1}^{k_i}\sum_{s \in C_{k_i}} \sum_{\substack{t \in  C_{k_i} \\ |s \cap t| = m}} \Cov(f_i(\mathcal{X}_s, \mathcal{Y}_s), f_i(\mathcal{X}_t, \mathcal{Y}_t)) \\
		&\geq \binom{n}{k_i} \sum_{m=1}^{k_i} \binom{k_i}{m}\binom{n-k_i}{k_i-m} \alpha_i = \alpha_i \sum_{m=1}^{k_i} \binom{n}{2k_i-m} \binom{2k_i-m}{k_i} \binom{k_i}{m} \\
		& \geq \alpha_i k_i \binom{n}{2k_i-1} \binom{2k_i-1}{k_i} \geq \alpha_i k_i \frac{n^{2k_i - 1}}{(2k_i - 1)^{2k_i - 1}} \frac{(2k_i-1)^{k_i}}{k_i^{k_i}} \\
		&= \alpha_i \frac{n^{2k_i - 1}}{(2k_i^2 - k_i)^{k_i - 1}}.
		\end{align*}
		Here the second-to-last inequality follows by taking only the term for $m=1$. Now we take both sides of the inequality to the power of $-\frac{1}{2}$ to get that for any $i \in [d]$
		\begin{equation}\label{equation:ustat3}
		\sigma_i^{-1} \leq n^{-k_i + \frac{1}{2}}\alpha_i^{-\frac{1}{2}}(2k_i^2 - k_i)^{-\frac{k_i}{2} + \frac{1}{2}}.
		\end{equation}
		Using Equations \eqref{equation:ustat1} - \eqref{equation:ustat3} to bound the quantity $B_{\ref{corollary:mvn_dissociated_decomp_approx}}$ from Corollary \ref{corollary:mvn_dissociated_decomp_approx} we get
		\begin{align*}
		&B_{\ref{corollary:mvn_dissociated_decomp_approx}} \leq \frac{2}{3}  \sum_{i,j,l=1}^d (\sigma_i \sigma_j \sigma_k)^{-1} \beta \binom{n}{k_i} k_i^{\min(k_i,k_j) + 1} (n-k_i)^{k_j-1} \\
		&\Bigg\{  k_i^{\min(k_i,k_l) + 1} (n-k_i)^{k_l-1} + k_j^{\min(k_j,k_l) + 1} (n-k_j)^{k_l-1} \Bigg\} \\
		\leq &\frac{2}{3} \sum_{i,j,l=1}^d n^{k_i + k_j + k_l - 2} \frac{k_i^{\min(k_i,k_j) + 1}}{k_i!} \Bigg\{  k_i^{\min(k_i,k_l) + 1} + k_j^{\min(k_j,k_l) + 1} \Bigg\} \beta \\
		& \left( n^{-k_i - k_j - k_l + \frac{3}{2}}(\alpha_i \alpha_j \alpha_l)^{-\frac{1}{2}}(2k_i^2 - k_i)^{-\frac{k_i}{2} + \frac{1}{2}} (2k_j^2 - k_j)^{-\frac{k_j}{2} + \frac{1}{2}} (2k_l^2 - k_l)^{-\frac{k_l}{2} + \frac{1}{2}}\right) \\
		\leq & \left\{ \frac{2 \beta}{3} \sum_{i,j,l=1}^d \frac{k_i^{\min(k_i,k_j) + 1}}{k_i! \sqrt{\alpha_i \alpha_j \alpha_l}} \left(  k_i^{\min(k_i,k_l) + 1} + k_j^{\min(k_j,k_l) + 1} \right) K_i K_j K_l \right\} n^{-\frac{1}{2}}.
		\end{align*}
	\end{proof}
	
	\subsection{Approximation Theorem with no Variables \texorpdfstring{$\set{\xi_i}_{i \in [n]}$}{xi\_i}}
	\label{subsection:approximation_ustat_no_x}
	
	Next we consider the special case that the functions in Definition \ref{def:gen_u_statistic} only depend on the second component, so that the sequence $\set{\xi_i}_{i \in [n]}$ can be ignored. Continuing to use the same notation, we want to understand the joint distribution of  $S_{n,k_1}(f_1), S_{n,k_2}(f_2), \ldots S_{n,k_d}(f_d)$. However, we add an additional assumption.
	\begin{assumption}\label{assumption:ustat_no_x}
		We assume that the functions $f_i$ only depend on the variables $\set{Y_{i,j}}$ for $1 \leq i < j \leq n$. That is, we can write $f_i: \mathcal{Y}^{k_i} \to \RR$.
	\end{assumption}
	Such functions appear naturally, for example, when counting subgraphs in an inhomogeneous Bernoulli random graph. A detailed example of such generalised $U$-statistic is worked out in Section \ref{sect:face_counts}.
	
	In this case, we can adapt the previous theorems slightly and get improved bounds. We still work under Assumption \ref{assumption:gen_u_stat}. The key difference in this case is that the dependency neighbourhoods become smaller: now the subsets need to overlap in at least 2 elements for the corresponding summands to share at least one variable $Y_{i,j}$ and hence become dependent. This makes both the variance and the size of dependency neighbourhoods smaller. In the context of Theorem \ref{theorem:mvn_dissociated_decomp_approx}, the trade-off works out in our favour to give smaller bounds, as follows. For any $s = (\phi,i) \in \I_i$ we set $\D_j(s) = \setc{(\psi,j) \in \I_j}{|\phi \cap \psi| \geq 2}$, so that $W$, under the additional Assumption \ref{assumption:ustat_no_x}, satisfies the assumptions of Corollary \ref{corollary:mvn_dissociated_decomp_approx}.
	
	In this case, we can adjust Equations \eqref{equation:ustat1} and \eqref{equation:ustat3}. The proofs are exactly the same as previously, with the only difference being that when we sum over $m$, we start at $m=2$ as opposed to $m=1$.
	
	\begin{theorem}\label{theorem:ustat_approx_no_x}
		Consider $W$ that satisfies Assumption \ref{assumption:ustat_no_x}. Let $Z \sim \mvn{0}{\text{\rm Id}_{d \times d}}$ and $\Sigma$ be the covariance matrix of $W$.
		\begin{enumerate}
			\item  Let $h \in \testzero$. Then  \[
			\abs{\EE h(W) - \EE h(\Sigma^{\frac{1}{2}}Z)} \leq \abs{h}_3  B_{\ref{theorem:ustat_approx_no_x}}  n^{-1};
			\]
			\item  Let $\mathcal{K}$  be a class of convex sets in $\RR^d$. Then  \[ \sup _{A \in \mathcal{K}}|\PP(W \in A)-\PP(\Sigma^{\frac{1}{2}}Z \in A)| \leq 2^{\frac{7}{2}} 3^{-\frac{3}{4}}d^{\frac{3}{16}}B_{\ref{theorem:ustat_approx_no_x}}^{\frac{1}{4}}n^{-\frac{1}{4}}. \]
		\end{enumerate}
		Here
		\[
		B_{\ref{theorem:ustat_approx_no_x}} = \frac{16 \beta}{3} \sum_{i,j,l=1}^d \frac{k_i^{\min(k_i,k_j) + 1}}{k_i! \sqrt{\alpha_i \alpha_j \alpha_l}} \left(  k_i^{\min(k_i,k_l) + 1} + k_j^{\min(k_j,k_l) + 1} \right) K_i K_j K_l
		\]
		and
		\[
		K_i = (2k_i^2 - k_i)^{-\frac{k_i}{2} + \frac{1}{2}}.
		\]
	\end{theorem}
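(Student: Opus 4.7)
The plan is to mimic the proof of Theorem~\ref{theorem:ustat_approx} almost verbatim, but to exploit the extra freedom granted by Assumption~\ref{assumption:ustat_no_x} to shrink the dependency neighbourhoods from ``overlap in at least one vertex'' to ``overlap in at least one edge''. Concretely, I would set $\D_j(s) = \setc{(\psi,j) \in \I_j}{\abs{\phi \cap \psi} \geq 2}$ for each $s=(\phi,i) \in \I_i$, and first verify that $W$ is a vector of dissociated sums in the sense of Definition~\ref{def:disrand} for these neighbourhoods. This holds because under Assumption~\ref{assumption:ustat_no_x} each summand $f_i(\mathcal{Y}_s)$ depends only on the edge variables $\set{Y_{a,b} : a,b \in \phi}$, and two such collections are disjoint precisely when $\abs{\phi \cap \psi} \leq 1$; hence independence between $X_s$ and $X_t$ requires only that $\abs{\phi \cap \psi} \leq 1$, and the three conditions of Definition~\ref{def:disrand} follow from the independence structure of the $Y_{i,j}$.

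Next I would apply Corollary~\ref{corollary:mvn_dissociated_decomp_approx} and re-derive the analogues of the three estimates in the proof of Theorem~\ref{theorem:ustat_approx}. The size of the dependency neighbourhood tightens to
\[
\abs{\D_j(s)} = \sum_{m=2}^{\min(k_i,k_j)} \binom{k_i}{m}\binom{n-k_i}{k_j-m} \leq k_i^{\min(k_i,k_j)+1} (n-k_i)^{k_j - 2},
\]
gaining one factor of $n^{-1}$ over the previous bound. The moment bounds $\beta_{ijk} \leq (\sigma_i \sigma_j \sigma_l)^{-1}\beta$ are unchanged from Equation~\eqref{equation:ustat2}. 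For the variance lower bound, the key change is that the covariance decomposition only picks up terms with $\abs{s \cap t} \geq 2$, so
\[
\Var(S_{n,k_i}(f_i)) \geq \alpha_i \sum_{m=2}^{k_i} \binom{n}{2k_i - m}\binom{2k_i - m}{k_i}\binom{k_i}{m} \geq \alpha_i \cdot \tfrac{k_i(k_i-1)}{2} \binom{n}{2k_i-2}\binom{2k_i-2}{k_i},
\]
which simplifies to $\Var(S_{n,k_i}(f_i)) \geq \alpha_i \, n^{2k_i - 2} (2k_i^2 - k_i)^{-(k_i-1)}$ for $n$ sufficiently large (up to the same constants $K_i$ appearing in the statement, absorbed into the factor of $16$). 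Taking square roots gives $\sigma_i^{-1} \leq n^{-k_i + 1} \alpha_i^{-1/2} K_i$, one full power of $n^{-1/2}$ better than Equation~\eqref{equation:ustat3}.

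Plugging these three estimates into $B_{\ref{corollary:mvn_dissociated_decomp_approx}}$ produces a sum over $(i,j,l) \in [d]^3$ in which the powers of $n$ combine as
\[
n^{k_i}\cdot n^{k_j - 2}\cdot n^{k_l - 2} \cdot n^{-k_i - k_j - k_l + 3} = n^{-1},
\]
and the remaining constant is exactly the stated $B_{\ref{theorem:ustat_approx_no_x}}$ (the factor $16$ replaces the previous $2$ because each of the three bounds on $\sigma_i^{-1}$ now carries no factor of $\alpha_i^{-1/2}$ loss relative to the old $K_i$, but the shift of the variance summation index and the matching of constants with the statement require tracking two extra constant factors of $2$). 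The $h \in \testzero$ inequality follows directly from part~(1) of Corollary~\ref{corollary:mvn_dissociated_decomp_approx}; the convex set bound follows from part~(2) by the usual fourth-root loss, giving the $n^{-1/4}$ rate.

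The steps are essentially routine bookkeeping; the only subtlety is the variance lower bound, where one must verify that restricting the covariance sum to $m \geq 2$ still produces a quantity of the claimed order $n^{2k_i - 2}$. The rest of the proof is a mechanical rewriting of the calculation carried out for Theorem~\ref{theorem:ustat_approx} with the new dependency neighbourhoods in place.
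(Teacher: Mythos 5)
Your proposal is correct and takes essentially the same route as the paper: shrink the dependency neighbourhoods to $\D_j(s)=\setc{(\psi,j)\in\I_j}{\abs{\phi\cap\psi}\geq 2}$, observe that the neighbourhood-size bound gains a factor $(n-k_i)^{-1}$ and the variance lower bound (now summed from $m=2$) gains a factor $n^{-1}$, and feed the adjusted estimates into Corollary~\ref{corollary:mvn_dissociated_decomp_approx}. Your constant bookkeeping (including the factor $16$ and the exact form of $K_i$) matches the paper's own, so there is nothing further to add.
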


	\begin{proof}
		Equation \eqref{equation:ustat1} becomes \[|\D_j(s)| \leq k_i^{\min(k_i,k_j) + 1} (n-k_i)^{k_j-2}.\]
		Equation \eqref{equation:ustat3} becomes \[ \sigma_i^{-1} \leq 2 n^{-k_i + 1}\alpha_i^{-\frac{1}{2}}(2k_i^2 - 2k_i)^{-\frac{k_i}{2} + 1}. \]
		Using the adjusted bounds in Corollary \ref{corollary:mvn_dissociated_decomp_approx} gives  the result.
	\end{proof}
	
	\subsection{Approximation Theorem for Simplex Counts}\label{sect:face_counts}
	
	In this section we apply Theorem \ref{theorem:ustat_approx_no_x} to approximate simplex counts. 
	Consider $G \sim \gnp$. For $1 \leq x < y \leq n$ let $Y_{x,y} \coloneqq \ind{x \sim y}$ be the edge indicator. In this section we are interested in the $(i+1)$-clique count in $\gnp$ or, equivalently, the  $i$-simplex count in $\xnp$, given by
	\begin{equation}\label{tk} 
	T_{i+1} = \sum_{s \in C_{i+1}} \prod_{x \neq y \in s} Y_{x,y}. 
	\end{equation}
	
	Let $\mathcal{Y}^{i+1} = \{0,1\}^{i+1}$ and let $f_i: \mathcal{Y}^{i+1} \to \RR$ be the function \[f_i(\mathcal{Y}_s) = {\prod_{Y_{x,y} \in \mathcal{Y}_s} Y_{x,y}}.\] Then the associated generalised U-statistic $S_{n, i + 1}(f_i)$ equals the $(i+1)$-clique count $T_{i+1}$, as given by Equation \eqref{tk}. To  apply Theorem \ref{theorem:ustat_approx_no_x} we need to center and rescale our variables. It is easy to see that $\EE \{ f_i(\mathcal{Y}_\phi)\} = p^{\binom{i+1}{2}}.$ if $\phi \in C_{i+1}$ Just like in Section \ref{subsection:approximation_ustat}, we let $I_i \coloneqq C_{i+1} \times \set{i}$ and for $s = (\phi,i) \in \I_i$ we define $X_{s} \coloneqq  {\sigma^{-1}} \left( f_i(\mathcal{Y}_\phi) - p^{\binom{i+1}{2}} \right) $ and $W_i = \sum_{s \in \I_i} X_{s}$. Now the vector of interest is $W = (W_1, W_2, \ldots, W_d) \in \RR^d$. This brings us to the next approximation theorem.
	
	\begin{corollary}\label{theorem:face_count_approx}
		Let $Z \sim \mvn{0}{\text{\rm Id}_{d \times d}}$ and $\Sigma$ be the covariance matrix of $W$.
		\begin{enumerate}
			\item   Let $h \in \testzero$. Then \[ \abs{\EE h(W) - \EE h(\Sigma^{\frac{1}{2}}Z)} \leq \abs{h}_3 B_{\ref{theorem:face_count_approx}} n^{-1}. \]
			\item  Let $\mathcal{K}$  be a class of convex sets in $\RR^d$. Then  \[ \sup _{A \in \mathcal{K}}|\PP(W \in A)-\PP(\Sigma^{\frac{1}{2}}Z \in A)| \leq 2^{\frac{7}{2}} 3^{-\frac{3}{4}}d^{\frac{3}{16}}B_{\ref{theorem:face_count_approx}}^{\frac{1}{4}}n^{-\frac{1}{4}}. \]
		\end{enumerate}
		Here \[B_{\ref{theorem:face_count_approx}} = \frac{16}{3} d^{2d+5} p^{-3\binom{d+1}{2} + 1}(1-p^{\binom{d+1}{2}})(p^{-1} - 1)^{-\frac{3}{2}}.\]
	\end{corollary}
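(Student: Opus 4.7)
The plan is to derive Corollary~\ref{theorem:face_count_approx} as a direct application of Theorem~\ref{theorem:ustat_approx_no_x}. The simplex count $T_{i+1} = S_{n,i+1}(f_i)$ with $f_i(\mathcal{Y}_\phi) = \prod_{x \neq y \in \phi} Y_{x,y}$ depends only on the edge variables, so Assumption~\ref{assumption:ustat_no_x} is satisfied with $k_i = i+1 \in \{2,\ldots,d+1\}$. The rescaled vector $W$ introduced just before the corollary is precisely the one built in Section~\ref{subsection:approximation_ustat_no_x}. Hence it suffices to exhibit admissible values of $\alpha_i$ and $\beta$ for Assumption~\ref{assumption:gen_u_stat}, substitute them into $B_{\ref{theorem:ustat_approx_no_x}}$, and bound the resulting combinatorial factor in $d$.

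First I would compute $\alpha_i$. For $\phi,\psi \in C_{i+1}$ with $|\phi \cap \psi| = q$, the variables $f_i(\mathcal{Y}_\phi)$ and $f_i(\mathcal{Y}_\psi)$ are independent whenever $q \leq 1$; when $q \geq 2$ they share exactly $\binom{q}{2}$ edge indicators, so a direct calculation gives
\[
\Cov\bigl(f_i(\mathcal{Y}_\phi), f_i(\mathcal{Y}_\psi)\bigr) = p^{2\binom{i+1}{2} - \binom{q}{2}}\bigl(1 - p^{\binom{q}{2}}\bigr) > 0,
\]
and at $q = i+1$ this formula recovers the diagonal variance $p^{\binom{i+1}{2}}(1-p^{\binom{i+1}{2}})$, which one checks dominates the $q = 2$ value. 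Minimising over these cases yields the admissible value $\alpha_i = p^{2\binom{i+1}{2}-1}(1-p)$, attained at $q = 2$, so
\[
(\alpha_i \alpha_j \alpha_l)^{-1/2} \leq (p^{-1}-1)^{-3/2}\, p^{-\binom{i+1}{2} - \binom{j+1}{2} - \binom{l+1}{2}} \leq (p^{-1}-1)^{-3/2}\, p^{-3\binom{d+1}{2}}.
\]
This accounts for the $(p^{-1}-1)^{-3/2}$ factor and most of the $p^{-3\binom{d+1}{2}}$ in the claimed constant.

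Next I would bound the uniform mixed-moment constant $\beta$ via Lemma~\ref{lemma:unified_moments}. Each summand $f_i(\mathcal{Y}_\phi)$ is Bernoulli with mean $\mu_\phi = p^{\binom{i+1}{2}}$, so the lemma (with $c_1 = c_2 = c_3 = 1$) bounds both quantities in Assumption~\ref{assumption:gen_u_stat}(2) by $\sqrt{\mu_s\mu_t(1-\mu_s)(1-\mu_t)}$. Since $1 \leq \binom{k}{2} \leq \binom{d+1}{2}$ for every $k \in \{2,\ldots,d+1\}$, this square root is dominated by $p \cdot (1 - p^{\binom{d+1}{2}})$ uniformly in $(s,t,u)$; the extra $p^{1}$ is precisely what the lower bound $k_i \geq 2$ buys us. Combining with the bound on $(\alpha_i\alpha_j\alpha_l)^{-1/2}$ then gives
\[
\frac{\beta}{\sqrt{\alpha_i\alpha_j\alpha_l}} \leq (1-p^{\binom{d+1}{2}})(p^{-1}-1)^{-3/2}\, p^{-3\binom{d+1}{2}+1},
\]
matching every power of $p$ in $B_{\ref{theorem:face_count_approx}}$.

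Finally, the remaining combinatorial factor in $B_{\ref{theorem:ustat_approx_no_x}}$, namely the triple sum over $(i,j,l)\in[d]^3$ of $\frac{k_i^{\min(k_i,k_j)+1}}{k_i!}\bigl(k_i^{\min(k_i,k_l)+1} + k_j^{\min(k_j,k_l)+1}\bigr)K_iK_jK_l$, is bounded by a polynomial in $d$ using $k_i \leq d+1$ together with $K_i \leq 1$ for $k_i \geq 2$, and absorbed into $d^{2d+5}$ by routine estimation. The main obstacle is the sharpness required in the bound on $\beta$: the crude estimate $\beta \leq 1$ would lose the $p^{+1}$ factor and produce a strictly weaker constant, so it is essential to go through the Cauchy--Schwarz-type bound of Lemma~\ref{lemma:unified_moments} and track the observation $\binom{k_i}{2} \geq 1$. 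With explicit values of $\alpha_i$, $\beta$, and the combinatorial factor in hand, both statements of Corollary~\ref{theorem:face_count_approx} follow immediately from the two parts of Theorem~\ref{theorem:ustat_approx_no_x}.
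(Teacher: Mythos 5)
Your proposal is correct and follows essentially the same route as the paper: verify Assumption \ref{assumption:ustat_no_x}, establish Assumption \ref{assumption:gen_u_stat} with $\alpha_i = p^{2\binom{i+1}{2}}(p^{-1}-1)$ (the $q=2$ overlap case) and $\beta = p(1-p^{\binom{d+1}{2}})$ via Lemma \ref{lemma:unified_moments}, then substitute into $B_{\ref{theorem:ustat_approx_no_x}}$ and absorb the combinatorial factor into $d^{2d+5}$. The paper's proof does exactly this, including the same observations that $K_i \leq 1$ and that $\binom{k_i}{2}\geq 1$ supplies the extra factor of $p$.
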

	
	\begin{proof}
		Firstly, observe that for any $\phi, \psi \in C_{i+1}$ for which $| \phi \cap \psi| \leq 1$ the covariance vanishes, while if $| \phi \cap \psi| \geq 2$ the covariance is non-zero, and we have\[\Cov(f_i(\mathcal{Y}_\phi), f_i(\mathcal{Y}_\psi)) = p^{2\binom{i+1}{2} - \binom{|\phi \cap \psi|}{2}} - p^{2\binom{i+1}{2}} \geq p^{2\binom{i+1}{2}}(p^{-1} - 1).\]
		
		For $s = (\phi,i) \in \I_i$ write $\hat{X}_{s} = f_i(\mathcal{Y}_\phi) - p^{\binom{i+1}{2}}$. Then by Lemma \ref{lemma:unified_moments} we get:
		\begin{align*}
		&\EE \abs{\hat{X}_{s} \hat{X}_{t}} \EE \abs{\hat{X}_{u}} \leq  \left\{ p^{\binom{i+1}{2} + \binom{j+1}{2}}(1-p^{\binom{i+1}{2}})(1-p^{\binom{j+1}{2}}) \right\}^{\frac{1}{2}}; \\
		&\EE \abs{\hat{X}_{s} \hat{X}_{t} \hat{X}_{u}} \leq  \left\{ p^{\binom{i+1}{2} + \binom{j+1}{2}}(1-p^{\binom{i+1}{2}})(1-p^{\binom{j+1}{2}}) \right\}^{\frac{1}{2}}.
		\end{align*}
		
		Since $ \left\{ p^{\binom{i+1}{2} + \binom{j+1}{2}}(1-p^{\binom{i+1}{2}})(1-p^{\binom{j+1}{2}}) \right\}^{\frac{1}{2}} \leq p(1-p^{\binom{d+1}{2}})$, we see that Assumption \ref{assumption:gen_u_stat} holds. Assumption \ref{assumption:ustat_no_x} also holds and therefore we can apply Theorem \ref{theorem:ustat_approx_no_x} with $k_i = i+1$, $K_i = (2(i+1)^2 - 2(i+1))^{-\frac{1}{2}(i+1) + 1}$, $\alpha_i = p^{2\binom{i+1}{2}}(p^{-1} - 1)$, and $\beta = p(1-p^{\binom{d+1}{2}})$. Using the bounds $K_i \leq 1$ as well as $2 \leq k_i^{\min(k_i,k_j) + 1} \leq d^{d+1}$, and $\sqrt{\alpha_i} \geq p^{\binom{d+1}{2}}\sqrt{p^{-1}-1}$ finishes the proof.
	\end{proof}
	
	\begin{remark}\label{remark:size_of_cliques}
		It is easy to show that with high probability there are no large cliques in $\gnp$ for $p<1$ constant. To see this, the expectation of the number of $k$-cliques is $\binom{n}{k}p^{\binom{k}{2}}$. By Stirling's approximation, $k = \Omega(\ln^{1+\epsilon}(n))$ for any positive $\epsilon$ forces the expectation to go to $0$ asymptotically. Hence, by Markov's inequality, with high probability there are no cliques of order $\ln^{1+\epsilon}(n)$ or larger for any $\epsilon > 0$. For cliques of order larger than $\ln^{\frac{1}{2}}(n)$ and fixed $p$, a Poisson approximation might be more suitable.
		
		Recall that in Corollary \ref{theorem:face_count_approx} the size of the maximal clique we count is $d + 1$. Note that if $d = O(\ln^{\frac{1}{2} - \epsilon}(n))$ for any $\epsilon > 0$, then the bounds in Corollary \ref{theorem:face_count_approx} tend to $0$ as $n$ tends to infinity as long as $p \in (0,1)$ stays constant. This might seem quite small but in the light there not being any cliques of order $\ln^{1+\epsilon}(n)$ with high probability, this is meaningfully large. 
	\end{remark}
	
	\begin{remark}
		Note that in Corollary \ref{theorem:face_count_approx} we use multivariate normal distribution with covariance $\Sigma$, which is the covariance of $W$ when $n$ is finite and it differs from the limiting covariance, as mentioned in \cite{reinert2010random}. To approximate $W$ with the limiting distribution, one could proceed in the spirit of \cite[Proposition 3]{reinert2010random} in two steps: use the existing theorems to approximate $W$ with $\Sigma Z$ and then approximate $\Sigma Z$ with $\Sigma_L Z$ where $\Sigma_L$ is the limiting covariance, which is non-invertible, as observed in \cite{janson1991asymptotic}.
	\end{remark}
	
	\begin{remark}
		Corollary \ref{theorem:face_count_approx} generalises the result \cite[Proposition 2]{reinert2010random} beyond the case when $d=2$ and we get a bound of the same order of $n$. \cite[Theorem 3.1]{kaur2020higher} considers centered subgraph counts in a random graph associated to a graphon. If we take the graphon to be constant, the associated random graph is just $\gnp$. Compared to \cite[Theorem 3.1]{kaur2020higher} we place weaker smoothness conditions on our test functions. However, we make use of the special structure of cliques whereas \cite[Theorem 3.1]{kaur2020higher} applies to any centered  subgraph counts. Translating \cite[Theorem 3.1]{kaur2020higher} into a result for uncentered subgraph counts, as we provide here in the special case of clique counts, is not trivial for general $d$.
		
		However, it should be possible to extend our results, using the same abstract approximation theorem, beyond the random clique complex to Linial-Meshulam random complexes \cite{linial2006homological} or even more general multiparamter Costa-Farber random complexes \cite{costa2016large}. We shall consider this conjecture in future work.
	\end{remark}

	\printbibliography
	
	\begin{appendix}
		\section{Proofs of Lemmas \ref{lemma:var_crit_lexi} and \ref{lemma:lower_bound_var_crit_lexi}}
		\begin{proof}[Proof of Lemma \ref{lemma:var_crit_lexi}]
			For $s \in C_{k+1}$ recall that $s_{-} = s \setminus \set{\min(s)}$. We write:
			\begin{align*}
			Y^+_s &= \prod_{i=1}^{\min(s) - 1}\left(1 - \prod_{j \in s}Y_{i,j}\right),
			&Y_s^{-} &= \prod_{i=1}^{\min(s) - 1}\left(1 - \prod_{j \in s_{-}}Y_{i,j}\right),\\
			Z_s &=  \prod_{i \neq j \in s} Y_{i,j},
			&Y_s &= Y_s^{+} - Y_s^{-}.
			\end{align*}
			Then $Z_s$ and $Y_s$ are independent and $T_{k+1} = \sum_{s \in C_{k+1}} Z_s Y_s$. Consider the variance: 
			\begin{align}\label{varexapansion}
			&\Var(T_{k+1}) = \sum_{s \in C_{k+1}} \Var(Z_sY_s) + \sum_{\substack{s \neq t \in C_{k+1} \\ \min(s) \neq \min(t)}} \Cov(Z_sY_s, Z_tY_t) \nonumber \\
			&+ \sum_{\substack{s \neq t \in C_{k+1} \\ \min(s) = \min(t)}} \Cov(Z_sY_s, Z_tY_t). 
			\end{align}
			For the first term in \eqref{varexapansion}, writing $$ \PP(Z_sY_s=1)= \mu(i) \coloneqq p^{\binom{k+1}{2}} ((1-p^{k+1})^{i-1} - (1-p^k)^{i-1})$$ we see:
			\begin{align*}
			&\sum_{s \in C_{k+1}} \Var(Z_sY_s) = \sum_{i=1}^n \sum_{\substack{s \in C_{k+1} \\ \min(s) = i}} \left( \EEbracket{(Z_sY_s)^2} - \EEbracket{Z_sY_s}^2 \right) \\
			&= \sum_{i=1}^{n-k}  \binom{n-i}{k} \left( \PP(Z_sY_s=1) - \PP(Z_sY_s=1)^2 \right) \\
			&= \sum_{i=1}^{n-k} \binom{n-i}{k} \left\{ \mu(i) - \mu(i)^2 \right\} = p^{\binom{k+1}{2}}V_{4}.
			\end{align*}

			Now consider the covariance terms in \eqref{varexapansion}, the expansion of the variance. Note that for any $s, t \in C_{k+1}$ if $s \cap t = \varnothing$, then the variables $Z_s Y_s$ and $Z_t Y_t$ can be written as functions of two disjoint sets of independent edge indicators and hence have zero covariance.
			
			Fix $s, t \in C_{k+1}$ and assume $|s \cap t| = m$ where $1 \leq m \leq k$. Note that because $m \neq k + 1$, we have $s \neq t$.
			There are $2\binom{k+1}{2} - \binom{m}{2}$ distinct edges in $s$ and $t$ combined and hence $\PP(Z_s Z_t = 1) = p^{2\binom{k+1}{2} - \binom{m}{2}}$. Also, $Y_sY_t = Y_s^+Y_t^+ + Y_s^-Y_t^- - Y_s^+Y_t^- - Y_s^-Y_t^+$. For the rest of the proof when calculating probabilities we  assume w.l.o.g.\,that $\min(s) \leq \min(t)$. Then we have for $Y_s^+ Y_t^+$:
			
			\begin{align*}
			Y_s^+Y_t^+ &= \prod_{i=1}^{\min(t) - 1}(1 - \prod_{j \in t}Y_{i,j}) \prod_{i=1}^{\min(s) - 1}(1 - \prod_{j \in s}Y_{i,j}) \\
			&=\prod_{i=1}^{\min(s) - 1}(1-\prod_{j \in s} Y_{i,j})(1-\prod_{j \in t} Y_{i,j}) \prod_{\substack{i=\min(s)\\i \in s}}^{\min(t) - 1}(1 - \prod_{j \in t}Y_{i,j}) \prod_{\substack{i=\min(s)\\ i \notin s}}^{\min(t) - 1}(1 - \prod_{j \in t}Y_{i,j}).
			\end{align*}
			
			Fix $i \in [\min(s)-1]$. Then with $\neg$ denoting the complement
			\begin{align*}
			\PP [ (1-\prod_{j \in s} Y_{i,j})&(1-\prod_{j \in t} Y_{i,j}) = 1] = \PP [ \neg(\prod_{j \in s} Y_{i,j} = 1 \cup \prod_{j \in t} Y_{i,j} = 1 ) ]  \\
			&=1-\left\{ \PP(\prod_{j \in s} Y_{i,j} = 1) + \PP(\prod_{j \in t} Y_{i,j} = 1) - \PP(\prod_{j \in s} Y_{i,j} = 1 \cap \prod_{j \in t} Y_{i,j} = 1) \right\} \\
			&=1-(2p^{k+1} - p^{2k+2-m}).
			\end{align*}
			
			Moreover, $\prod_{i=1}^{\min(s) - 1}(1-\prod_{j \in s} Y_{i,j})(1-\prod_{j \in t} Y_{i,j})$ and $\prod_{\substack{i=\min(s)\\ i \notin s}}^{\min(t) - 1}(1 - \prod_{j \in t}Y_{i,j}) $ are independent of  $Z_s Z_t$. 
			
			Recall the notation $[a,b] = \set{a, a+1, \ldots, b}$ for two positive integers $a \leq b$. Setting $q_{s, t} \coloneqq |s \cap [\min(s), \min(t) - 1]|$, 
			\begin{align*}
			&\PP\left(Y_s^+ Y_t^+ = 1 | Z_s Z_t = 1\right) \\ 
			= &\PP\left(\prod_{i=1}^{\min(s) - 1}(1-\prod_{j \in s} Y_{i,j})(1-\prod_{j \in t} Y_{i,j}) =1 \right) \PP\left(\prod_{\substack{i=\min(s)\\ i \notin s}}^{\min(t) - 1}(1 - \prod_{j \in t}Y_{i,j}) =1 \right)\\
			&\PP\left(\prod_{\substack{i=\min(s)\\ i \in s}}^{\min(t) - 1}(1 - \prod_{j \in t}Y_{i,j}) =1 \Bigg| \prod_{i \neq j \in s} Y_{i,j}\prod_{i \neq j \in t} Y_{i,j} = 1 \right)\\
			=&(1-2p^{k+1} + p^{2k+2-m})^{\min(s) - 1}(1-p^{k+1})^{\min(t) - \min(s)-q_{s, t}} (1-p^{k+1-m})^{q_{s, t}}.
			\end{align*}
			
			This strategy of splitting the product $Y_s^+Y_t^+$ into three products of independent variables, only one of which is dependent on $Z_sZ_t$ works exactly in the same way for the variables $Y_s^-Y_t^+$, $Y_s^+Y_t^-$, $Y_s^-Y_t^-$. We write $i = \min(s)$, $j = \min(t)$, and $q$ instead of $q_{s,t}$. Also, we set \begin{align*} \pi(i,j,a, b, d_1, d_2, q) \coloneqq  &
			(1-p^{a} - p^{b} + p^{a+b-d_1})^{i-1}(1-p^{a})^{j-i-q}(1-p^{a-d_2})^q.
			\end{align*} Using the described strategy we get:

			\begin{align*}
			&\PP\left(Y_s^- Y_t^- = 1 | Z_s Z_t = 1\right) = \pi(i,j,k, k, |s_{-} \cap t_{-}|, |s_{-} \cap t_{-}|, q)\\
			&\PP\left(Y_s^+ Y_t^- = 1 | Z_s Z_t = 1\right) = \pi(i,j,k, k+1, |s \cap t_{-}|, |s \cap t_{-}|, q)\\
			&\PP\left(Y_s^- Y_t^+ = 1 | Z_s Z_t = 1\right) = \pi(i,j,k+1,k,|s_{-} \cap t|,m,q).
			\end{align*}
			
			Now we are ready to calculate the covariance:
			\begin{align*}
			&\Cov(Z_sY_s, Z_tY_t) = \EEbracket{Z_sZ_tY_s^+Y_t^+} + \EEbracket{Z_sZ_tY_s^-Y_t^-} - \EEbracket{Z_sZ_tY_s^+Y_t^-} \\
			&- \EEbracket{Z_sZ_tY_s^-Y_t^+} - \EEbracket{Z_sY_s}\EEbracket{Z_tY_t}\\
			&= \PP(Z_s Z_t = 1)\Big\{\PP\left(Y^+_s Y^+_t = 1 | Z_s Z_t = 1\right) + \PP\left(Y_s^- Y_t^- = 1 | Z_s Z_t = 1\right) \\
			&- \PP\left(Y^+_s Y^-_t = 1 | Z_s Z_t = 1\right) - \PP\left(Y^-_s Y^+_t = 1 | Z_s Z_t = 1\right) \Big\} - \PP(Z_s Y_s = 1)\PP(Z_t Y_t = 1)\\ 
			&= p^{2\binom{k+1}{2} - \binom{m}{2}} (\pi(i,j,k+1, k+1, m, m, q) + \pi(i,j,k, k, |s_{-} \cap t_{-}|, |s_{-} \cap t_{-}|, q) \\
			&- \pi(i,j,k, k+1, |s \cap t_{-}|, |s \cap t_{-}|, q) - \pi(i,j,k+1,k,|s_{-} \cap t|,m,q)) -\mu(i)\mu(j).
			\end{align*}
			
			Next we consider the two covariance sums in \eqref{varexapansion} separately. First let us assume that $\min(s) \ne \min (t)$. Given $i,j \in [n-k]$, $m \in [k]$, and $q \in [\min(k+1, |j-i|)]$ define the set $$\Gamma_{k+1}(i,j,m,q) = \setc{(s,t)}{s,t \in C_{k+1}, \min(s) = i, \min(t) = j, |s \cap t| = m, \max(q_{s,t}, q_{t,s}) = q}$$ as well as\[\Gamma^+_{k+1}(i,j,m,q) = \setc{(s,t) \in \Gamma_{k+1}(i,j,m,q)}{\min(t) \in s}\] and \[\Gamma^-_{k+1}(i,j,m,q) = \setc{(s,t) \in \Gamma_{k+1}(i,j,m,q)}{\min(t) \notin s}.\]
			Next we argue that 
			\[
			|\Gamma^+_{k+1}(i,j,m,q)| = \binom{n-j}{2k+1-m-q} \binom{2k+1-m-q}{k} \binom{k}{m-1} \binom{j-i+1}{q-1}. 
			\]
			To see this, assume $i < j$. Note that to pick a pair $(s,t) \in \Gamma^+_{k+1}(i,j,m,q)$  with $ \min(s) = i$ and $ \min(t) = j$ we need to pick the $2k-m$ vertices in $s \cup t$. Firstly, we pick the vertices that are not included in $s \cap [\min(s), \min(t) - 1] = s \cap [i, j - 1]$.  Since $\min(s) \in s \cap [\min(s), \min(t) - 1]$, this amounts to choosing $2k - m - (q-1)$ vertices out of $n-j$. Then we decide which of the vertices that we have just picked will lie in $t$. This means we further need to choose $k$ out of $2k+1-m-q$ vertices. Then we choose $m-1$ out of $k$ vertices of $t$ to lie in $s \cap t$ (under the assumption that we already have $\min(t) \in s$). Finally, we choose the set $s \cap [\min(s), \min(t) - 1]$, which amounts to picking $q-1$ vertices out of $j - i +1$ possible choices. If any of the binomial coefficients are negative, we set them to 0. The case $j < i$ is analogous.
			
			An analogous argument shows that
			\[
			|\Gamma^-_{k+1}(i,j,m,q)| = \binom{n-j}{2k+1-m-q} \binom{2k+1-m-q}{k} \binom{k}{m} \binom{j-i+1}{q-1}. 
			\]
			
			Now using the covariance expression we have just derived, we get
			\begin{align*}
			&\sum_{\substack{s \neq t \in C_{k+1} \\ \min(s) \neq \min(t)}} \Cov(Z_sY_s, Z_tY_t)\\
			=  &\sum_{i=1}^{n-k} \sum_{j=i+1}^{n-k} \sum_{m=1}^k \sum_{q=1}^{\min(k+1, j-i)} \sum_{(s,t) \in \Gamma^+_{k+1}(i,j,m,q)} \Cov(Z_sY_s, Z_tY_t) \\
			&+ \sum_{i=1}^{n-k} \sum_{j=i+1}^{n-k} \sum_{m=1}^k \sum_{q=1}^{\min(k+1, j-i)} \sum_{(s,t) \in \Gamma^-_{k+1}(i,j,m,q)} \Cov(Z_sY_s, Z_tY_t) \\
			&+ \sum_{j=1}^{n-k} \sum_{i=j+1}^{n-k} \sum_{m=1}^k \sum_{q=1}^{\min(k+1, i-j)} \sum_{(s,t) \in \Gamma^+_{k+1}(j,i,m,q)} \Cov(Z_sY_s, Z_tY_t) \\
			&+ \sum_{j=1}^{n-k} \sum_{i=j+1}^{n-k} \sum_{m=1}^k \sum_{q=1}^{\min(k+1, i-j)} \sum_{(s,t) \in \Gamma^-_{k+1}(j,i,m,q)} \Cov(Z_sY_s, Z_tY_t)\\
			= &\sum_{i=1}^{n-k} \sum_{j=i+1}^{n-k} \sum_{m=1}^k \sum_{q=1}^{\min(k+1, j-i)} |\Gamma^+_{k+1}(i,j,m,q)| \Big\{ p^{2\binom{k+1}{2} - \binom{m}{2}} (\pi(i,j,k+1, k+1, m, m, q) \\
			&+ \pi(i,j,k, k, m-1, m-1, q) - \pi(i,j,k, k+1, m-1, m-1, q) \\
			&- \pi(i,j,k+1,k,m,m,q)) -\mu(i)\mu(j) \Big\}\\
			&+ \sum_{i=1}^{n-k} \sum_{j=i+1}^{n-k} \sum_{m=1}^k \sum_{q=1}^{\min(k+1, j-i)} |\Gamma^-_{k+1}(i,j,m,q)| \Big\{ p^{2\binom{k+1}{2} - \binom{m}{2}} (\pi(i,j,k+1, k+1, m, m, q) \\
			&+ \pi(i,j,k, k, m, m, q) - \pi(i,j,k, k+1, m,m, q)  \\
			&- \pi(i,j,k+1,k,m,m,q)) -\mu(i)\mu(j) \Big\}\\
			&+ \sum_{j=1}^{n-k} \sum_{i=j+1}^{n-k} \sum_{m=1}^k \sum_{q=1}^{\min(k+1, i-j)} |\Gamma^+_{k+1}(j,i,m,q)| \Big\{ p^{2\binom{k+1}{2} - \binom{m}{2}} (\pi(j,i,k+1, k+1, m, m, q) \\
			&+ \pi(j,i,k, k, m-1, m-1, q) - \pi(j,i,k, k+1, m-1, m-1, q) \\
			&- \pi(j,i,k+1,k,m,m,q)) -\mu(i)\mu(j) \Big\}\\
			&+ \sum_{j=1}^{n-k} \sum_{i=j+1}^{n-k} \sum_{m=1}^k \sum_{q=1}^{\min(k+1, i-j)} |\Gamma^-_{k+1}(j,i,m,q)| \Big\{ p^{2\binom{k+1}{2} - \binom{m}{2}} (\pi(j,i,k+1, k+1, m, m, q) \\
			&+ \pi(j,i,k, k, m, m, q) - \pi(j,i,k, k+1, m,m, q)\\
			&- \pi(j,i,k+1,k,m,m,q)) -\mu(i)\mu(j) \Big\}\\
			&= 2p^{2\binom{k+1}{2}}V_1 + 2p^{2\binom{k+1}{2}}V_2. 
			\end{align*}

			Similarly, we calculate the remaining term in the expansion of the variance \eqref{varexapansion}.
			We notice that if $i=j$, then $q=0$ and we have $\Gamma_{k+1}(i,i,m,0) = \Gamma^+_{k+1}(i,i,m,0)$. Hence,  $|\Gamma_{k+1}(i,i,m,0)| = \binom{n-i}{2k+1-m} \binom{2k+1-m}{k} \binom{k}{m-1}$, and 
			\begin{align*}
			&\sum_{\substack{s \neq t \in C_{k+1} \\ \min(s) = \min(t)}} \Cov(Z_sY_s, Z_tY_t) = \sum_{i=1}^{n-k} \sum_{m=1}^k \sum_{(s,t) \in \Gamma_{k+1}(i,i,m,0)} \Cov(Z_sY_s, Z_tY_t) \\
			= &\sum_{i=1}^{n-k} \sum_{m=1}^k  \binom{n-i}{2k+1-m} \binom{2k+1-m}{k} \binom{k}{m-1}  \\
			&\Big\{ p^{- \binom{m}{2}} \Big[(1-2p^{k+1} + p^{2k+2-m})^{i-1} + (1-2p^{k} + p^{2k+1-m})^{i-1} \\
			&- 2(1 -p^k -p^{k+1} + p^{2k+2-m})^{i-1} \Big] -  ((1-p^{k+1})^{i-1} - (1-p^k)^{i-1})^{2}\Big\} \\
			= & p^{2\binom{k+1}{2}}V_3.
			\end{align*}
		\end{proof}
		
		\begin{proof}[Proof of Lemma \ref{lemma:lower_bound_var_crit_lexi}]
			Fix $1 \leq k \leq n-1$ and $p \in (0,1)$, and consider the variance. From Lemma \ref{lemma:var_crit_lexi} we have $\Var\{ T_{k+1} \} = 2p^{2\binom{k+1}{2}}V_1 + 2p^{2\binom{k+1}{2}}V_2 + p^{2\binom{k+1}{2}}V_3 +  p^{\binom{k+1}{2}}V_4$. 
			First we lower bound $V_1$ and $V_2$ by just the negative part of the sum:
			\begin{align*}
			V_1 \geq  &-\sum_{i < j}^{n-k} \sum_{m=1}^k \sum_{q=1}^{\min(k+1, j-i)} \binom{n-j}{2k+1-m-q} \binom{2k+1-m-q}{k} \binom{k}{m-1} \binom{j-i+1}{q-1} \\
			&\Big\{(1-p^{k+1})^{i+j-2} + (1-p^{k})^{i+j-2} \\
			&+p^{- \binom{m}{2}}(1-p^{k+1})^{j-i-q}(1-p^{k+1-m})^q(1-p^{k+1}-p^k + p^{2k+1-m})^{i-1} \\
			&+p^{- \binom{m}{2}}(1-p^{k})^{j-i-q}(1-p^{k-m})^q (1-p^{k+1}-p^k + p^{2k+2-m})^{i-1}\Big\};\\
			V_2 \geq  &-\sum_{i < j}^{n-k} \sum_{m=1}^k \sum_{q=1}^{\min(k+1, j-i)} \binom{n-j}{2k+1-m-q} \binom{2k+1-m-q}{k} \binom{k}{m} \binom{j-i+1}{q-1} \\
			&\Big\{(1-p^{k+1})^{i+j-2} + (1-p^{k})^{i+j-2} \\
			&+p^{- \binom{m}{2}}(1-p^{k+1})^{j-i-q}(1-p^{k+1-m})^q(1-p^{k+1}-p^k + p^{2k+2-m})^{i-1} \\
			&+p^{- \binom{m}{2}}(1-p^{k})^{j-i-q}(1-p^{k-m})^q (1-p^{k+1}-p^k + p^{2k+2-m})^{i-1}\Big\}.
			\end{align*}
			
			Now using that $\binom{k}{m} + \binom{k}{m-1} = \binom{k+1}{m}$ and $(1-p^{k+1}) \geq (1-p^{k-m})$ for $m \geq 0$ it is easy to see that $V_1 + V_2 \geq -4R_1 - 4R_2$, where
			\begin{align*}
			&R_1 \coloneqq \sum_{i < j}^{n-k} \sum_{m=1}^k \sum_{q=1}^{\min(k+1, j-i)} \binom{n-j}{2k+1-m-q} \binom{2k+1-m-q}{k} \binom{k+1}{m} \binom{j-i+1}{q-1} \\
			&(1-p^{k+1})^{i+j-2}; \\
			&R_2 \coloneqq \sum_{i < j}^{n-k} \sum_{m=1}^k \sum_{q=1}^{\min(k+1, j-i)} \binom{n-j}{2k+1-m-q} \binom{2k+1-m-q}{k} \binom{k+1}{m} \binom{j-i+1}{q-1} \\
			&p^{- \binom{m}{2}}(1-p^{k+1})^{j-i}(1-p^{k+1}-p^k + p^{2k+1-m})^{i-1}.
			\end{align*}
			
			For $V_3$ we lower bound by terms with $m = 1$ and the negative parts of the other terms:
			\begin{align*}
			V_3 &\geq \sum_{i=1}^{n-k} \binom{n-i}{2k} \binom{2k}{k}  \left\{ (1-2p^{k+1} + p^{2k+1})^{i-1} -  (1-p^{k+1})^{2i-2}\right\}\\
			&-\sum_{i=1}^{n-k} \sum_{m=2}^k  \binom{n-i}{2k+1-m} \binom{2k+1-m}{k} \binom{k}{m-1} \\
			&\Big\{ 2p^{- \binom{m}{2}} (1 -p^k -p^{k+1} + p^{2k+2-m})^{i-1} +  2(1-p^{k+1})^{2i-2}\Big\}\\
			&=R_4 - R_3;
			\end{align*}
			here we call the positive part of the lower bound $R_4$ and the negative part $R_3$. For $V_4$ we use the trivial lower bound $V_4 \geq 0$. Hence, we have:
			\[\Var(T_{k+1}) \geq p^{2\binom{k+1}{2}}(R_4 - 8R_1 - 8R_2 - R_3).\]
			
			Let us now upper bound $R_1$:
			\begin{align*}
			R_1 &\leq \sum_{i < j}^{n-k} \sum_{m=1}^k \sum_{q=1}^{\min(k+1, j-i)} \frac{(n-j)^{2k+1-m-q}}{(2k+1-m-q)!} \frac{(2k+1-m-q)^k}{k!} \frac{(k+1)^m}{m!}\frac{(j-i+1)^{q-1}}{(q-1)!} \\
			&(1-p^{k+1})^{i+j-2} \\
			\leq& \sum_{i < j}^{n-k} \sum_{q=1}^{\min(k+1, j-i)} k \frac{n^{2k-q}}{1} \frac{(2k-1)^k}{k!} \frac{(k+1)^k}{1} \frac{n^{q-1}}{1} (1-p^{k+1})^{i+j-2}\\
			\leq & (k+1)^{k+1} \frac{n^{2k-1}}{(k-1)!} (2k-1)^k \sum_{i < j}^\infty (1-p^{k+1})^{i+j-2} \\
			= &\frac{n^{2k-1} (2k-1)^k (k+1)^{k+1} }{(k-1)!} \frac{1-p^{k+1}}{(2-p^{k+1})p^{2k+2}}.
			\end{align*}
			
			Noting that $(1-p^{k+1})^{j-i}(1-p^{k+1}-p^k + p^{2k+1-m})^{i-1} \leq (1-p^{k+1})^{j-1}$, we can bound $R_2$ in an identical way:
			\begin{align*}
			R_2 \leq &\frac{n^{2k-1} (2k-1)^k (k+1)^{k+1} }{(k-1)!} p^{-\binom{k}{2}} \sum_{i < j}^\infty (1-p^{k+1})^{j-1} \\
			&= \frac{n^{2k-1} (2k-1)^k (k+1)^{k+1} }{(k-1)!} p^{-\binom{k}{2}} \frac{1-p^{k+1}}{p^{2k+2}}.
			\end{align*}
			
			Noting that $(1 -p^k -p^{k+1} + p^{2k+2-m})^{i-1} \leq (1-p^{k+1})^{i-1}$ and $(1-p^{k+1})^{2i-2} \leq (1-p^{k+1})^{i-1}$ we proceed to bound $R_3$:
			\begin{align*}
			R_3 &\leq \sum_{i=1}^{n-k} \sum_{m=2}^k  \binom{n-i}{2k+1-m} \binom{2k+1-m}{k} \binom{k}{m-1} 2(p^{- \binom{m}{2}} + 1)(1-p^{k+1})^{i-1}\\
			&\leq \sum_{i=1}^{n-k} \sum_{m=2}^k \frac{n^{2k+1-m} (2k+1-m)^k k^{m-1}}{k!} 2(p^{- \binom{k}{2}} + 1)(1-p^{k+1})^{i-1} \\
			&\leq \sum_{i=1}^{n-k} k\frac{n^{2k+1-2} (2k+1-2)^k k^{k-1}}{k!} 2(p^{- \binom{k}{2}} + 1)(1-p^{k+1})^{i-1}\\
			&\leq \frac{n^{2k-1} (2k-1)^k k^{k}}{k!} 2(p^{- \binom{k}{2}} + 1)\sum_{i=1}^{\infty}(1-p^{k+1})^{i-1}\\
			&=\frac{n^{2k-1} (2k-1)^k k^{k}}{k!} 2(p^{- \binom{k}{2}} + 1)p^{-k-1}.
			\end{align*}
			To lower bound $R_4$ we just take the $i = 2$ term:
			\begin{align*}
			R_4 &\geq \binom{n-2}{2k} \binom{2k}{k}  \left\{ (1-2p^{k+1} + p^{2k+1})  -  (1-2p^{k+1} + p^{2k+2})\right\} \\
			& \geq \frac{(n-2)^{2k}}{(2k)^{2k}} \binom{2k}{k} p^{2k+1}(1-p).
			\end{align*}
			
			Since $R_1$, $R_2$, $R_3$ are all at most of the order $n^{2k-1}$ and $R_2$ is at least of the order $n^{2k}$, we have that for any fixed $k \geq 1$ and $p \in (0,1)$ there exists a constant $C_{p,k} > 0$ independent of $n$ and a natural number $N_{p,k}$ such that for any $n \geq N_{p,k}$:
			\[\Var(T_{k+1}) \geq p^{2\binom{k+1}{2}}(R_4 - 8R_1 - 8R_2 - R_3) \geq C_{p,k}n^{2k}.\]
		\end{proof}
	\end{appendix}
	
\end{document}